\documentclass[10pt]{article}

\usepackage[margin=1in]{geometry}

\usepackage[numbers,sort&compress]{natbib}

\usepackage{authblk}

\usepackage{amsmath, amssymb, amsfonts, mathtools, amsthm}
\mathtoolsset{showonlyrefs=true}
\usepackage{hyperref}
\usepackage{cleveref}
\usepackage{subcaption}

\usepackage{algorithm}

\usepackage{algpseudocode}

\usepackage{graphicx}
\usepackage{booktabs}
\usepackage{xcolor}

\usepackage{multirow}
\usepackage{pifont}

\usepackage{aliascnt}

\newtheorem{theorem}{Theorem}[section]

\newaliascnt{lemma}{theorem}

\aliascntresetthe{lemma}

\newaliascnt{definition}{theorem}

\aliascntresetthe{definition}

\newaliascnt{assumption}{theorem}
\newtheorem{assumption}[assumption]{Assumption}
\aliascntresetthe{assumption}

\newaliascnt{proposition}{theorem}
\newtheorem{proposition}[proposition]{Proposition}
\aliascntresetthe{proposition}

\newaliascnt{example}{theorem}

\aliascntresetthe{example}

\newaliascnt{corollary}{theorem}

\aliascntresetthe{corollary}

\crefname{corollary}{corollary}{corollaries}
\Crefname{corollary}{Corollary}{Corollaries}

\crefname{theorem}{theorem}{theorems}
\Crefname{theorem}{Theorem}{Theorems}
\crefname{lemma}{lemma}{lemmas}
\Crefname{lemma}{Lemma}{Lemmas}
\crefname{definition}{definition}{definitions}
\Crefname{definition}{Definition}{Definitions}
\crefname{assumption}{assumption}{assumptions}
\Crefname{assumption}{Assumption}{Assumptions}
\crefname{proposition}{proposition}{propositions}
\Crefname{proposition}{Proposition}{Propositions}
\crefname{example}{example}{examples}
\Crefname{example}{Example}{Examples}



\newcommand{\tr}{\operatorname{tr}}
\newcommand{\diag}{\operatorname{diag}}

\newcommand{\E}{\mathbb{E}}
\newcommand{\F}{\mathcal{F}}
\newcommand{\R}{\mathbb{R}}
\newcommand{\ip}[2]{\left\langle #1,#2\right\rangle}
\newcommand{\norm}[1]{\left\|#1\right\|}

\newcommand{\Lmat}{\mathbb{L}} 

\begin{document}

\title{Randomized Subspace Nesterov Accelerated Gradient}

\author[1,2]{Gaku Omiya}
\author[2]{Pierre-Louis Poirion}
\author[1,2]{Akiko Takeda}

\affil[1]{Department of Mathematical Informatics, The University of Tokyo, Tokyo, Japan}
\affil[2]{Center for Advanced Intelligence Project, RIKEN, Tokyo, Japan}

\date{}

\maketitle

\begin{abstract}
Randomized-subspace methods reduce the cost of first-order optimization by
using only low-dimensional projected-gradient information, a feature that is
attractive in forward-mode automatic differentiation and communication-limited
settings. While Nesterov acceleration is well understood for full-gradient and
coordinate-based methods, obtaining accelerated methods for general subspace
sketches that use only projected-gradient information and can improve over
full-dimensional Nesterov acceleration in oracle complexity is technically
nontrivial.

We develop randomized-subspace Nesterov accelerated gradient methods for smooth
convex and smooth strongly convex optimization under matrix smoothness and
generic sketch moment assumptions. The key technical ingredient is a
three-sequence formulation tailored to matrix smoothness, which recovers the
corresponding classical Nesterov methods in the full-dimensional case. The
resulting theory establishes accelerated oracle-complexity guarantees and makes
explicit how matrix smoothness and the sketch distribution enter the complexity.
It also provides a unified basis for comparing sketch families and identifying
when randomized-subspace acceleration improves over full-dimensional Nesterov
acceleration in oracle complexity.
\end{abstract}

\noindent\textbf{Keywords:} Randomized subspace methods; Nesterov acceleration; Convergence analysis; Convex optimization; Matrix smoothness

\section{Introduction}
\label{sec:introduction}

We consider the unconstrained optimization problem:
\begin{equation}
    \min_{x \in \mathbb{R}^d} f(x),
    \label{eq:main-problem}
\end{equation}
where \(f:\mathbb{R}^d \to \mathbb{R}\) is differentiable, and focus on both the smooth convex and smooth strongly convex settings.
Whenever a minimizer exists, we denote by \(x^\star\) an optimal solution and set
\(f^\star \coloneqq f(x^\star)\).
First-order methods are the workhorse of large-scale optimization due to their scalability and low per-iteration cost.
Among them, acceleration techniques---most notably Nesterov's accelerated gradient (NAG) method \cite{nesterov1983,nesterov2004intro}---play a fundamental role.
They achieve optimal convergence rates, improving from \(\mathcal{O}(1/N)\) to \(\mathcal{O}(1/N^2)\) in the convex setting, and from \(L/\mu\) to \(\sqrt{L/\mu}\) dependence in the strongly convex case.
As a result, acceleration has become an indispensable component in modern optimization algorithms.

\paragraph{High-dimensional optimization and randomized subspace methods.}
Many machine learning applications give rise to large-scale, high-dimensional optimization problems.
To address this challenge, randomized coordinate and block-coordinate descent methods have been extensively studied
\cite{nesterov2012,richtarik2014,luxiao2015,wright2015}.
At each iteration, these methods update only one coordinate or a small subset of coordinates, thereby reducing the per-iteration cost.

A natural generalization is given by \emph{randomized subspace methods}, which update along a randomly chosen low-dimensional subspace.
A representative update, introduced by \cite{kozak2021}, is
\begin{equation}
x_{k+1} = x_k - \alpha_k P_k P_k^\top \nabla f(x_k),
\end{equation}
where \(P_k \in \mathbb{R}^{d \times r}\) is a random sketch matrix with \(r \le d\); here, $r$ corresponds to the dimension of the subspace.
This framework includes randomized coordinate and block-coordinate updates as
special cases, and recovers standard gradient descent when \(r=d\) and
\(P_kP_k^\top=I_d\).

Randomized subspace methods are attractive for several reasons.
They provide more flexibility than coordinate-aligned updates, and they are particularly effective under memory and communication constraints.
For instance, reverse-mode AD is efficient for full-gradient computation but
typically requires storing intermediate quantities,
whereas forward-mode AD can compute directional derivatives with lower memory
overhead. 
In a forward-mode implementation, computing the full gradient may
require \(d\) directional derivatives,
whereas subspace methods require only \(r\).
Here, \(P_kP_k^\top\nabla f(x)\) is understood as
\(P_k(P_k^\top\nabla f(x))\), with only \(P_k^\top\nabla f(x)\) queried.
Similarly, in distributed optimization, transmitting an \(r\)-dimensional sketch is significantly cheaper than communicating a full gradient.
These advantages have led to growing interest in randomized subspace gradient methods
\cite{kozak2021,nesterovspokoiny2017,baydin2022gradients,nozawa2023rsg_constrained,omiya2026rsnsgd,flynn2024}.  

\paragraph{Limitations of existing acceleration methods.}
Despite this progress, a fundamental gap remains. While acceleration is essential in first-order optimization, its integration into randomized subspace methods is still poorly understood. For coordinate descent methods, accelerated variants have been developed, including Nesterov's accelerated coordinate descent \cite{nesterov2012} and subsequent refinements such as APCG \cite{lin2014apcg}, APPROX \cite{fercoq2015approx}, ALPHA
\cite{qu2016coordinate}, and non-uniform sampling schemes \cite{allenzhu2016}. However, these methods rely heavily on coordinate-wise structure and do not extend naturally to general subspace directions. 
Acceleration has also been studied for compressed gradient descent in distributed optimization \cite{li2020acgd}.
However, under our oracle model, this does not give the desired oracle-complexity advantage over full-dimensional Nesterov acceleration.
A detailed comparison with compressed-gradient methods is deferred to Appendix~\ref{app:related_work}.

  A natural attempt is to directly combine Nesterov acceleration with randomized subspace gradients by replacing the full gradient with \(P_k P_k^\top \nabla f\):
\[
x_{k+1}=y_k-\eta P_kP_k^\top \nabla f(y_k),
\qquad
y_{k+1}=x_{k+1}+\beta_k(x_{k+1}-x_k).
\]
 However, accelerated guarantees for this direct two-sequence scheme are not obtained by a straightforward adaptation of the classical analysis.
The classical two-sequence Nesterov analysis relies on delicate estimate-sequence arguments,
which do not carry over directly to randomized subspace updates.
In particular, the direct argument closes only under restrictive near-full-dimensional conditions;
a precise discussion is deferred to Appendix~\ref{app:two-sequence-obstruction}.

 \begin{table}[t]
\centering
\caption{Comparison of oracle complexity under a directional-derivative oracle model, 
  where each directional-derivative evaluation counts as one oracle call. 
  Here \(R_0:=\|x_0-x^\star\|\), \(\Delta_0:=f(x_0)-f(x^\star)\), and the sketch parameters \(\ell\) and \(\omega\) are defined in \Cref{ass:sketch-common}. The RS-GD bounds of \cite{kozak2021} are re-evaluated under the same assumptions as our proposed method; see Appendix~\ref{app:kozak-rates}. After instantiating \(\omega\) and \(\ell\) for standard sketches, the resulting
oracle factors are compared in \Cref{sec:sketch-examples}.}
\label{tab:oracle-comparison-intro}
{
\setlength{\tabcolsep}{4pt}
\renewcommand{\arraystretch}{1.1}
\begin{tabular}{@{}l|cc|cc@{}}
\toprule
Method & Convex & & Strongly convex & \\
\midrule
GD
&
\(\mathcal{O}\!\left(d R_0^2 \frac{L}{\epsilon}\right)\)
& 
&
\(\mathcal{O}\!\left(d \frac{L}{\mu}\log\frac{\Delta_0}{\epsilon}\right)\)
& 
\\
RS-GD \cite{kozak2021}
&
\(\mathcal{O}\!\left(\omega r R_0^2 \frac{L}{\epsilon}\right)\)
& Prop.~\ref{prop:kozak-convex}
&
\(\mathcal{O}\!\left(\ell r \frac{L}{\mu}\log\frac{\Delta_0}{\epsilon}\right)\)
& Prop.~\ref{prop:kozak-strong}
\\
\midrule
NAG \cite{nesterov1983,nesterov2004intro}
&
\(\mathcal{O}\!\left(d R_0 \sqrt{\frac{L}{\epsilon}}\right)\)
& 
&
\(\mathcal{O}\!\left(d \sqrt{\frac{L}{\mu}}\log\frac{\Delta_0}{\epsilon}\right)\)
& 
\\
\textbf{RS-NAG (this work)}
&
\(\mathcal{O}\!\left(\sqrt{\omega\ell r^2}\, R_0 \sqrt{\frac{L}{\epsilon}}\right)\)
& Thm.~\ref{thm:rs-nesterov-convex}
&
\(\mathcal{O}\!\left(\sqrt{\omega\ell r^2}\sqrt{\frac{L}{\mu}}\log\frac{\Delta_0}{\epsilon}\right)\)
& Thm.~\ref{thm:rs-nesterov-strong}
\\
\bottomrule
\end{tabular}
}
\end{table}

\paragraph{Our approach and contributions.}
These observations lead to the following question:
\begin{center}
  \emph{Can one design accelerated methods that use only randomized subspace gradients \\ while achieving improved oracle complexity?}
\end{center}
In this paper, we answer this question affirmatively.
We propose randomized-subspace Nesterov accelerated gradient (RS-NAG) methods,
to our knowledge the first accelerated framework for general randomized subspace gradient
methods that can achieve favorable oracle complexity compared with both
non-accelerated randomized-subspace methods and standard NAG; see
\Cref{tab:oracle-comparison-intro}.
Our key technical contribution is a novel \emph{three-sequence} formulation tailored to
matrix smoothness, which combines a sketched descent step with an auxiliary estimate sequence
and enables a clean convergence analysis.

\paragraph{Our contributions.}
\begin{itemize}
\item \textbf{Accelerated randomized subspace methods.}
  We propose Nesterov-type randomized subspace methods for both convex and strongly convex optimization,
  recovering the corresponding classical Nesterov methods in the full-dimensional case.

\item \textbf{Oracle complexity under matrix smoothness.}
  We prove convergence and oracle-complexity bounds that capture the interaction between
  matrix smoothness and the sketch distribution.

\item \textbf{Comparison with full-dimensional acceleration.}
  Our bounds identify when randomized subspace acceleration can outperform full-dimensional
  Nesterov acceleration in oracle complexity.

\item \textbf{Unified comparison of sketching strategies.}
  We analyze Haar, coordinate, and Gaussian sketches, revealing their relative convergence bounds
  and identifying optimal sketch dimensions in terms of oracle complexity.
\end{itemize}

\paragraph{Additional probability guarantees.}
Beyond the expectation bounds stated in \Cref{thm:rs-nesterov-convex,thm:rs-nesterov-strong}, Appendix~\ref{app:hp-as}
also provides uniform-in-time high-probability bounds and almost-sure eventual
rates with only mild losses.
These results show that the accelerated behavior predicted by the expectation
bounds is not merely an average-over-runs phenomenon, but persists with high
probability uniformly over time and eventually along almost every run.

\paragraph{Notation: }
\label{sec:notation}
Unless stated otherwise, $\|\cdot\|$ denotes the Euclidean norm for vectors
  and the corresponding operator norm for matrices induced by the Euclidean norm. Let $I_d$ denote the 
$d \times d$ identity matrix.
For a matrix \(A\in\mathbb{R}^{d\times d}\), \(\diag(A)\in\mathbb{R}^{d\times d}\) denotes the diagonal matrix whose diagonal entries coincide with those of \(A\).
For symmetric matrices \(A,B\in\mathbb{R}^{d\times d}\), we write
\(A\preceq B\) if \(B-A\) is positive semidefinite.

Throughout the paper, we assume that the ambient dimension satisfies \(d\ge 2\). For sketch-based methods, we assume \(1\le r\le d\).

\section{Preliminaries}
\label{sec:preliminaries}

\subsection{Oracle complexity}
\label{sec:oracle-complexity}
We measure oracle complexity in terms of directional-derivative queries.
Under the oracle model considered here, which is compatible with forward-mode AD, a \emph{full-dimensional gradient evaluation} costs \(d\) directional-derivative queries.
In contrast, a \emph{projected gradient evaluation}, which computes a projection of the gradient onto a lower-dimensional subspace, e.g., \(P^\top \nabla f(x)\) for a matrix \(P\in\mathbb{R}^{d\times r}\) with \(r\le d\), costs only \(r\) such queries.
Thus, in this oracle model, the cost scales with \(r\) rather than \(d\).
This oracle measure is also relevant in distributed settings with communication bottlenecks, where transmitting an \(r\)-dimensional sketch requires sending \(r\) real numbers, as opposed to \(d\) for a full gradient, 
and therefore also reduces the communication volume from \(d\) to \(r\) scalars; see
\Cref{app:distributed-comparison} for details.
Accordingly, our complexity results are intended for forward-mode and/or communication-bottlenecked regimes, rather than as a claim about universal wall-clock speedups across all implementations.

\subsection{Problem setting and sketch assumptions}
\begin{assumption}[Matrix smoothness]
\label{ass:matrix-smooth-common}
Let \(f:\R^d\to\R\) be differentiable. Assume that there exists a fixed matrix
\(\Lmat\in\R^{d\times d}\) with
\[
\Lmat \succeq 0,
\qquad
\Lmat \neq 0,
\]
such that, for all \(x,y\in\R^d\),
\begin{equation}
f(y)\le f(x)+\ip{\nabla f(x)}{y-x}+\frac12 (y-x)^\top \Lmat (y-x).
\label{eq:matrix-smooth-common}
\end{equation}
Define
\(
L \coloneqq \|\Lmat\|.
\)
\end{assumption}
Since \(\Lmat\preceq L I_d\), \eqref{eq:matrix-smooth-common} implies the
standard scalar \(L\)-smooth descent lemma.
Assumptions of this form have recently been used increasingly in matrix-smooth optimization and compression; see, e.g., \citet{wang2022smoothnessaware,li2023detCGD,maranjyan2025gradskip,hanzely2019,flynn2024}.

\begin{assumption}[Sketch moment conditions]
\label{ass:sketch-common}
Let \(P\in\R^{d\times r}\) be a random matrix.
Assume that for some constants \(\omega>0\) and \(\ell>0\),
\begin{align}
\E[PP^\top] &= I_d, \label{eq:unbiased-common}\\
\E[(PP^\top)^2] &\preceq \omega I_d, \label{eq:second-moment-common}\\
\E[PP^\top \Lmat PP^\top] &\preceq \ell L\, I_d.
\label{eq:Lsmooth-sketch-common}
\end{align}
\end{assumption}
We will show in \Cref{sec:sketch-examples} that this assumption is satisfied by
Haar, coordinate, and Gaussian sketches.

In all algorithms and analyses below, the sketch matrices \(\{P_k\}_{k\ge0}\) are assumed to be i.i.d.\ copies of a random matrix \(P\in\R^{d\times r}\).

\begin{proposition}
\label{prop:ell-omega-redundant}
Under \Cref{ass:matrix-smooth-common,ass:sketch-common}, any constant
\(\omega\) satisfying \eqref{eq:second-moment-common} satisfies
\(\omega\ge d/r\), and any constant \(\ell\) satisfying
\eqref{eq:Lsmooth-sketch-common} satisfies \(\ell\ge1\). Consequently, every
admissible pair \((\omega,\ell)\) satisfies \(\ell\omega\ge1\). Moreover, for
any \(\omega\) satisfying \eqref{eq:second-moment-common},
\eqref{eq:Lsmooth-sketch-common} holds with \(\ell=\omega\). 
\end{proposition}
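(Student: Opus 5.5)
We prove the four claims in order, using only trace and Jensen-type arguments on the sketch moments in \Cref{ass:sketch-common}.

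\emph{Lower bound on $\omega$.} Set $M\coloneqq PP^\top$, which is symmetric positive semidefinite with rank at most $r$. By \eqref{eq:unbiased-common}, $\E[M]=I_d$, so $\E[\tr M]=d$. Write $\lambda_1,\dots,\lambda_r\ge 0$ for the (possibly zero) nonzero-eigenvalue slots of $M$. Cauchy--Schwarz gives
\[
(\tr M)^2=\Bigl(\sum_{i=1}^r\lambda_i\Bigr)^2\le r\sum_i\lambda_i^2=r\,\tr(M^2).
\]
Taking the trace of \eqref{eq:second-moment-common} gives $\E[\tr(M^2)]\le \omega d$. Jensen's inequality then yields
\[
d^2=(\E\tr M)^2\le \E[(\tr M)^2]\le r\,\E[\tr(M^2)]\le r\omega d,
\]
so $\omega\ge d/r$.

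\emph{Lower bound on $\ell$.} Pick a unit eigenvector $v$ of $\Lmat$ with $\Lmat v=Lv$; it exists because $\Lmat\succeq0$ is nonzero and $L=\|\Lmat\|$ is its top eigenvalue. Since $\Lmat\succeq0$, the map $u\mapsto u^\top\Lmat u$ is convex, so Jensen's inequality applies with $u=Mv$:
\[
v^\top\E[M\Lmat M]v=\E[(Mv)^\top\Lmat(Mv)]\ge (\E Mv)^\top\Lmat(\E Mv)=v^\top\Lmat v=L.
\]
Combining this with \eqref{eq:Lsmooth-sketch-common} gives $\ell L\ge L$. Because $L>0$, we conclude $\ell\ge1$.

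\emph{Product bound and the choice $\ell=\omega$.} From the two bounds, $\ell\omega\ge d/r\ge1$, since $r\le d$. For the last claim, $\Lmat\preceq LI_d$, and congruence by the symmetric matrix $M$ preserves the Loewner order. Hence $M\Lmat M\preceq L M^2$ pointwise. Taking expectations and applying \eqref{eq:second-moment-common} gives $\E[M\Lmat M]\preceq L\,\E[M^2]\preceq \omega L I_d$, so \eqref{eq:Lsmooth-sketch-common} holds with $\ell=\omega$.

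The only step needing care is the rank argument in the first part. The bound $(\tr M)^2\le r\tr(M^2)$ relies on $M$ having at most $r$ nonzero eigenvalues; the trivial bound with $d$ in place of $r$ would only give $\omega\ge1$.
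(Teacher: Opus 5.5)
Your proposal is correct and follows the paper's proof step for step: the paper uses the same rank-$r$ Cauchy--Schwarz plus Jensen argument for $\omega\ge d/r$, the same product bound, and the same congruence $M\Lmat M\preceq LM^2$ for $\ell=\omega$. The only variation is in the bound $\ell\ge 1$: you apply Jensen directly to the convex quadratic form $w\mapsto w^\top\Lmat w$, which in fact gives the slightly stronger $\E[M\Lmat M]\succeq\Lmat$, whereas the paper first uses $\Lmat\succeq Luu^\top$ and then scalar Jensen on $(u^\top Au)^2$.
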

The proof is deferred to \Cref{app:missing-proofs-prem}. By the last statement
of \Cref{prop:ell-omega-redundant}, for any admissible \(\omega\),
\eqref{eq:Lsmooth-sketch-common} also holds with \(\ell=\omega\). Therefore,
we assume without loss of generality throughout the rest of the paper that
\(\ell\le\omega\).
The explicit constants derived in \Cref{sec:sketch-examples} and summarized in
\Cref{tab:sketch_constants} satisfy this convention.

\section{RS-NAG for convex problems (RS-NAG-C)}
\label{sec:rs-NAG-c}
We first consider the convex, not necessarily strongly convex, setting.
\begin{assumption}[Convexity]
\label{ass:func-convex}
Let \(f:\R^d\to\R\) be differentiable and convex.
Assume that \(f\) admits a minimizer.
\end{assumption}

Under \Cref{ass:func-convex}, we propose RS-NAG for convex problems
(RS-NAG-C), a randomized-subspace variant of standard NAG for smooth convex
optimization \cite{nesterov1983,nesterov2004intro}. The method is given in
\Cref{alg:rs-NAG-c}. Compared with standard NAG, RS-NAG-C uses only the sketched gradient
\(P_kP_k^\top\nabla f(y_k)\) in place of the full gradient \(\nabla f(y_k)\).

\begin{algorithm}[t]
\caption{RS-NAG-C}
\label{alg:rs-NAG-c}
\begin{algorithmic}[1]
\Require \(x_0\in\mathbb{R}^d\), constants \(L,\omega,\ell>0\), and an i.i.d.\ sketch sequence \(\{P_k\}_{k\ge0}\)
\State \(m \gets 1/(2L\ell)\)
\State \(A_0 \gets 0\), \(z_0 \gets x_0\)
\For{\(k=0,1,2,\dots\)}
    \State \(a_{k+1} \gets \dfrac{m+\sqrt{m^2+2\omega m A_k}}{\omega}\)
    \State \(A_{k+1} \gets A_k+a_{k+1}\)
    \State \(y_k \gets \dfrac{A_k}{A_{k+1}}x_k+\dfrac{a_{k+1}}{A_{k+1}}z_k\)
    \State \(x_{k+1} \gets y_k-\dfrac{1}{L\ell}P_kP_k^\top \nabla f(y_k)\)
    \State \(z_{k+1} \gets z_k-a_{k+1}P_kP_k^\top \nabla f(y_k)\)
\EndFor
\end{algorithmic}
\end{algorithm}

The corresponding two-sequence NAG recursion is recalled in
\Cref{app:missing-proofs-convex},
\eqref{eq:NAG-conv-x-final}--\eqref{eq:NAG-conv-y-final}. We prove below that
RS-NAG-C reduces to this recursion when \(r=d\) and \(P_kP_k^\top=I_d\).
Proofs for this section are deferred to \Cref{app:missing-proofs-convex}.

\begin{proposition}[Convex case: reduction to standard Nesterov]
\label{prop:convex-reduction-nesterov}
Suppose \Cref{ass:matrix-smooth-common,ass:sketch-common} and
\Cref{ass:func-convex} hold. Consider the full-sketch case \(r=d\), and assume
that
\[
P_kP_k^\top=I_d
\qquad\text{for all }k\ge0,
\]
which is the case, for example, for the Haar and block coordinate sketches.
Then \Cref{ass:sketch-common} is satisfied with
\(
\omega=1, \ell=1.
\)
With this choice, the \((x_k,y_k)\)-sequence generated by
\Cref{alg:rs-NAG-c} coincides with classical two-sequence
NAG for convex objectives.
\end{proposition}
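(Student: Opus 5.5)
We first verify \Cref{ass:sketch-common} with \(\omega=\ell=1\), then eliminate \(z_k\) from \Cref{alg:rs-NAG-c} to get a two-sequence recursion and match it with \eqref{eq:NAG-conv-x-final}--\eqref{eq:NAG-conv-y-final}.

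\textbf{Step 1: sketch constants.} If \(P_kP_k^\top=I_d\) almost surely, then \(\E[PP^\top]=I_d\) and \(\E[(PP^\top)^2]=I_d\), so \eqref{eq:second-moment-common} holds with \(\omega=1\). Also \(\E[PP^\top\Lmat PP^\top]=\Lmat\preceq L I_d\), so \eqref{eq:Lsmooth-sketch-common} holds with \(\ell=1\). This choice is consistent with \Cref{prop:ell-omega-redundant}: \(\omega\ge d/r=1\), \(\ell\ge 1\), and \(\ell\le\omega\).

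\textbf{Step 2: the algorithm with \(\omega=\ell=1\).} Here \(m=1/(2L)\) and \(a_{k+1}=m+\sqrt{m^2+2mA_k}\). Equivalently, \(a_{k+1}\) is the positive root of
\[
a_{k+1}^2=2m(A_k+a_{k+1})=A_{k+1}/L .
\]
The updates become \(x_{k+1}=y_k-\frac1L\nabla f(y_k)\) and \(z_{k+1}=z_k-a_{k+1}\nabla f(y_k)\). From \(\nabla f(y_k)=L(y_k-x_{k+1})\) we get
\[
z_{k+1}=z_k+a_{k+1}L(x_{k+1}-y_k).
\]
The definition of \(y_k\) gives \(z_k=\bigl(A_{k+1}y_k-A_kx_k\bigr)/a_{k+1}\). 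Substituting and using \(La_{k+1}=A_{k+1}/a_{k+1}\), the \(y_k\) terms cancel:
\[
z_{k+1}=x_k+\frac{A_{k+1}}{a_{k+1}}(x_{k+1}-x_k).
\]
Plugging this into \(y_{k+1}=\frac{A_{k+1}}{A_{k+2}}x_{k+1}+\frac{a_{k+2}}{A_{k+2}}z_{k+1}\) yields
\[
y_{k+1}=x_{k+1}+\beta_k(x_{k+1}-x_k),\qquad \beta_k=\frac{a_{k+2}(A_{k+1}-a_{k+1})}{A_{k+2}\,a_{k+1}}=\frac{a_{k+2}A_k}{A_{k+2}a_{k+1}}.
\]
The initial step also matches: \(A_0=0\) gives \(y_0=z_0=x_0\).

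\textbf{Step 3: matching the classical recursion (main obstacle).} The remaining work is bookkeeping. We must show that \(\beta_k\) equals the momentum coefficient in \eqref{eq:NAG-conv-y-final}, whose exact parametrization is only fixed in the appendix. The relation \(a_{k+1}^2=A_{k+1}/L\) suggests the rescaling \(t_k:=La_{k}\). Then \(t_{k+1}^2=LA_{k+1}\) and \(t_{k+1}^2-t_{k+1}=LA_k=t_k^2\), which is the standard Nesterov recursion \(t_{k+1}=\frac{1+\sqrt{1+4t_k^2}}{2}\). In these variables,
\[
\beta_k=\frac{t_{k+2}\,t_{k+1}^2-t_{k+2}\,t_{k+1}}{t_{k+2}^2\,t_{k+1}}=\frac{t_{k+1}-1}{t_{k+2}},
\]
which is the classical momentum coefficient. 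The first index is \(t_1=La_1=1\), since \(a_1=2m=1/L\). So \(t_1=1\) plays the role of the usual initialization \(t_0=1\), after an index shift. If the appendix instead writes NAG with \(\beta_k=\frac{k}{k+3}\) or an \(A_k\)-form, we would compare directly using the same identities. We expect the only delicate point to be aligning indices with the appendix's convention.
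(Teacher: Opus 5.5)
Your proposal is correct and follows essentially the same route as the paper: you verify $\omega=\ell=1$, eliminate $z_k$ via $z_{k+1}=(A_{k+1}x_{k+1}-A_kx_k)/a_{k+1}$ to get $\beta_k=a_{k+2}A_k/(a_{k+1}A_{k+2})$, and rescale to the standard $t$-recursion, which is the form the appendix uses ($t_0=1$, $\beta_k=(t_k-1)/t_{k+1}$). Your $t_k=La_k$ is the paper's $t_{k-1}=\sqrt{LA_k}$, which is the index shift you anticipated, and writing $A_k=A_{k+1}-a_{k+1}$ in $\beta_k$ lets you cover $k=0$ uniformly, whereas the paper checks $\beta_0=0$ separately.
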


Therefore, the proposed randomized-subspace methods in the convex setting
can be viewed as generalizations of the standard NAG.

We next state the convergence guarantee for RS-NAG-C.

\begin{theorem}
\label{thm:rs-nesterov-convex}
Suppose 
\Cref{ass:matrix-smooth-common,ass:sketch-common} and~\Cref{ass:func-convex} hold,
and let \(\{x_k,y_k,z_k\}\)  
be generated by
\Cref{alg:rs-NAG-c}.
Then, for all \(N\ge1\),
\begin{equation}
\E[f(x_N)-f^\star]
\le
2L\omega\ell\,\frac{\norm{x_0-x^\star}^2}{N^2}.
\label{eq:convex-final-rate}
\end{equation}
In particular, for \(R_0\coloneqq\norm{x_0-x^\star}\), the iteration
complexity to guarantee \(\E[f(x_N)-f^\star]\le\epsilon\) is
\[
N
=
\mathcal{O}\!\left(
R_0\sqrt{\frac{L\omega\ell}{\epsilon}}
\right),
\]
and since one iteration uses \(r\) oracle calls, the oracle complexity is
\[
\#\mathrm{Oracle}
=
rN
=
\mathcal{O}\!\left(
R_0\sqrt{\frac{L\omega\ell r^2}{\epsilon}}
\right).
\]
\end{theorem}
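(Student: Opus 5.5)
The plan is a Lyapunov (estimate-sequence) argument with the potential
\[
\Phi_k \coloneqq A_k\bigl(f(x_k)-f^\star\bigr)+\tfrac{1}{2}\norm{z_k-x^\star}^2 .
\]
I will show that \(\E_k[\Phi_{k+1}]\le\Phi_k\), where \(\E_k\) denotes conditional expectation given \(P_0,\dots,P_{k-1}\). Since \(A_0=0\), this gives \(A_N\,\E[f(x_N)-f^\star]\le \tfrac12\norm{x_0-x^\star}^2\). A lower bound \(A_N\ge N^2/(4L\omega\ell)\) then yields \eqref{eq:convex-final-rate}.

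\textbf{Step 1: descent from the \(x\)-step.} Write \(g=\nabla f(y_k)\) and \(S=P_kP_k^\top\). Apply \Cref{ass:matrix-smooth-common} at \(y_k\) with step \(-\frac{1}{L\ell}Sg\):
\[
f(x_{k+1})\le f(y_k)-\tfrac{1}{L\ell}g^\top S g+\tfrac{1}{2L^2\ell^2}g^\top S\Lmat S g .
\]
Take \(\E_k\) and use \eqref{eq:unbiased-common} and \eqref{eq:Lsmooth-sketch-common}. This gives
\[
\E_k f(x_{k+1})\le f(y_k)-\tfrac{1}{2L\ell}\norm{g}^2 = f(y_k)-m\norm{g}^2 .
\]
This step is the reason the matrix-smoothness constant \(\ell\) enters only through the step size \(1/(L\ell)\).

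\textbf{Step 2: the \(z\)-step.} Expanding the square,
\[
\tfrac12\norm{z_{k+1}-x^\star}^2=\tfrac12\norm{z_k-x^\star}^2-a_{k+1}\ip{Sg}{z_k-x^\star}+\tfrac{a_{k+1}^2}{2}g^\top S^2 g .
\]
Taking \(\E_k\) and using \eqref{eq:unbiased-common} and \eqref{eq:second-moment-common}, this becomes at most
\[
\tfrac12\norm{z_k-x^\star}^2-a_{k+1}\ip{g}{z_k-x^\star}+\tfrac{\omega a_{k+1}^2}{2}\norm{g}^2 .
\]
Here \(z_k\) and \(y_k\) are measurable with respect to the past, so \(S\) is independent of them.

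\textbf{Step 3: combining with convexity.} By the definition of \(y_k\) we have \(a_{k+1}(z_k-y_k)=A_k(y_k-x_k)\). Convexity gives
\[
A_k\bigl(f(x_k)-f(y_k)\bigr)\ge A_k\ip{g}{x_k-y_k}, \qquad a_{k+1}\bigl(f^\star-f(y_k)\bigr)\ge a_{k+1}\ip{g}{x^\star-y_k}.
\]
Summing these shows
\[
A_{k+1}f(y_k)-a_{k+1}\ip{g}{z_k-x^\star}\le A_kf(x_k)+a_{k+1}f^\star .
\]
Combining with Steps 1 and 2,
\[
\E_k\Phi_{k+1}\le\Phi_k+\Bigl(\tfrac{\omega a_{k+1}^2}{2}-mA_{k+1}\Bigr)\norm{g}^2 .
\]
The bracket vanishes exactly when \(\omega a_{k+1}^2=2m(A_k+a_{k+1})\). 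Solving this quadratic for \(a_{k+1}\) gives precisely the formula for \(a_{k+1}\) in \Cref{alg:rs-NAG-c}. Hence the potential is a supermartingale, and the tower property finishes the telescoping.

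\textbf{Step 4: growth of \(A_N\).} From \(\omega a_{k+1}^2=2mA_{k+1}\) we get \(\sqrt{A_{k+1}}-\sqrt{A_k}\ge a_{k+1}/(2\sqrt{A_{k+1}})=\sqrt{m/(2\omega)}\). Therefore
\[
A_N\ge \frac{mN^2}{2\omega}=\frac{N^2}{4L\omega\ell},
\]
which gives \(\E[f(x_N)-f^\star]\le 2L\omega\ell\norm{x_0-x^\star}^2/N^2\). The iteration count follows by setting the right-hand side to \(\epsilon\), and the oracle count follows by multiplying by \(r\).

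The main obstacle is Step 3. One must check that the cross term \(\ip{Sg}{z_k-x^\star}\) is exactly unbiased, which relies on independence of \(P_k\) from \(y_k\) and \(z_k\). One must also balance the two separate sketch constants so that the \(x\)-step descent, which uses \(\ell\), pays for the \(z\)-step variance, which uses \(\omega\). The choice of \(m\) and \(a_{k+1}\) is designed precisely so that this balance closes with equality.
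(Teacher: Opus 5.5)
Your proposal is correct and follows the paper's proof essentially step for step. It uses the same potential \(A_k(f(x_k)-f^\star)+\tfrac12\norm{z_k-x^\star}^2\), the same conditional bounds for the \(x\)-step (via \(\ell\)) and the \(z\)-step (via \(\omega\)), the same convexity coupling through \(A_{k+1}y_k=A_kx_k+a_{k+1}z_k\) with the \(\norm{g}^2\) term cancelled by \(\omega a_{k+1}^2=2mA_{k+1}\), and the same \(\sqrt{A_{k+1}}-\sqrt{A_k}\) increment argument giving \(A_N\ge mN^2/(2\omega)\); the paper only rescales this last step as \(B_k=\sqrt{A_k/\rho}\).
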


Compared with the convex RS-GD bound in
\Cref{tab:oracle-comparison-intro}, RS-NAG-C improves the accuracy dependence
of the oracle complexity from \(1/\epsilon\) to \(1/\sqrt{\epsilon}\), matching
the acceleration effect of NAG over GD.

\section{RS-NAG for strongly convex problems (RS-NAG-SC)}
\label{sec:rs-NAG-sc}

We next consider the strongly convex setting.

\begin{assumption}[Strong convexity]
\label{ass:func}
Let \(f:\R^d\to\R\) be differentiable and \(\mu\)-strongly convex with
\(\mu>0\).
\end{assumption}

Under \Cref{ass:func}, we propose RS-NAG for strongly convex problems
(RS-NAG-SC), a randomized-subspace variant of standard NAG for smooth strongly
convex optimization \cite{nesterov1983,nesterov2004intro}. The method is given
in \Cref{alg:rs-NAG-sc}. Compared with standard NAG, RS-NAG-SC uses only the
sketched gradient \(P_kP_k^\top\nabla f(y_k)\) in place of the full gradient
\(\nabla f(y_k)\).

\begin{algorithm}[t]
\caption{RS-NAG-SC}
\label{alg:rs-NAG-sc}
\begin{algorithmic}[1]
\Require \(x_0\in\mathbb{R}^d\), constants \(L,\mu,\omega,\ell>0\), and an i.i.d.\ sketch sequence \(\{P_k\}_{k\ge0}\)
\State \(\theta \gets \sqrt{\mu/(L\omega\ell)}\)
\State \(z_0 \gets x_0\)
\For{\(k=0,1,2,\dots\)}
    \State \(y_k \gets \dfrac{1}{1+\theta}x_k+\dfrac{\theta}{1+\theta}z_k\)
    \State \(x_{k+1} \gets y_k-\dfrac{1}{L\ell}P_kP_k^\top \nabla f(y_k)\)
    \State \(z_{k+1} \gets (1-\theta)z_k+\theta y_k-\dfrac{\theta}{\mu}P_kP_k^\top \nabla f(y_k)\)
\EndFor
\end{algorithmic}
\end{algorithm}

The corresponding two-sequence NAG recursion is recalled in
\Cref{app:missing-proofs-strong},
\eqref{eq:NAG-sc-x-final}--\eqref{eq:beta-classical}. We prove below that RS-NAG-SC reduces to this
recursion when \(r=d\) and \(P_kP_k^\top=I_d\). Proofs for this section are
deferred to \Cref{app:missing-proofs-strong}.

\begin{proposition}[Strongly convex case: reduction to standard Nesterov]
\label{prop:strong-reduction-nesterov}
Suppose \Cref{ass:matrix-smooth-common,ass:sketch-common} and
\Cref{ass:func} hold. Consider the full-sketch case \(r=d\), and assume that
\[
P_kP_k^\top=I_d
\qquad\text{for all }k\ge0,
\]
which is the case, for example, for the Haar and block coordinate sketches.
Then \Cref{ass:sketch-common} is satisfied with
\(
\omega=1, \ell=1.
\)
With this choice, the \((x_k,y_k)\)-sequence generated by
\Cref{alg:rs-NAG-sc} coincides with classical two-sequence
Nesterov accelerated gradient method
for strongly convex objectives.
\end{proposition}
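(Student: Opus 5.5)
First I would verify the sketch constants. If \(P_kP_k^\top=I_d\) almost surely, then \(\E[PP^\top]=I_d\), \(\E[(PP^\top)^2]=I_d\preceq 1\cdot I_d\), and \(\E[PP^\top\Lmat PP^\top]=\Lmat\preceq L I_d\) because \(L=\|\Lmat\|\) and \(\Lmat\succeq0\). So \Cref{ass:sketch-common} holds with \(\omega=\ell=1\). By \Cref{prop:ell-omega-redundant}, \(\omega\ge d/r=1\) and \(\ell\ge1\), so these are in fact the smallest admissible values.

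Next, with \(\omega=\ell=1\), \Cref{alg:rs-NAG-sc} becomes \(\theta=\sqrt{\mu/L}\), \(y_k=\frac{1}{1+\theta}x_k+\frac{\theta}{1+\theta}z_k\), \(x_{k+1}=y_k-\frac1L\nabla f(y_k)\), and \(z_{k+1}=(1-\theta)z_k+\theta y_k-\frac{\theta}{\mu}\nabla f(y_k)\). The plan is to eliminate \(z\) and show that \(y_{k+1}=x_{k+1}+\beta(x_{k+1}-x_k)\) with \(\beta=\frac{1-\theta}{1+\theta}=\frac{\sqrt L-\sqrt\mu}{\sqrt L+\sqrt\mu}\), which is the classical constant momentum in \eqref{eq:beta-classical}. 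The initialization \(z_0=x_0\) gives \(y_0=x_0\), matching the classical start.

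For the elimination, I would first invert the \(y\)-update to get \(\theta z_k=(1+\theta)y_k-x_k\). Then I would substitute \(\nabla f(y_k)=L(y_k-x_{k+1})\) into the \(z\)-update and multiply by \(\theta\):
\[
\theta z_{k+1}=(1-\theta)\bigl((1+\theta)y_k-x_k\bigr)+\theta^2y_k-\tfrac{\theta^2L}{\mu}(y_k-x_{k+1}).
\]
Using \(\theta^2L/\mu=1\), this simplifies to \(\theta z_{k+1}=x_{k+1}-(1-\theta)(x_k-y_k)\).

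Plugging this into \((1+\theta)y_{k+1}=x_{k+1}+\theta z_{k+1}\) gives \((1+\theta)y_{k+1}=2x_{k+1}-(1-\theta)x_k+(1-\theta)y_k\). Here a leftover \(y_k\) term appears, so I would not stop at this step. The expected obstacle is exactly this residual term. I would resolve it by expressing \(y_k\) through the previous step: \(y_k=x_{k+1}+\frac1L\nabla f(y_k)\). Alternatively, I would check algebraically whether the identity \(\theta z_{k+1}=x_{k+1}+\frac{1-\theta}{\theta}\cdot(\ldots)\) closes more cleanly if the \(z\)-update is rewritten as \(z_{k+1}=x_k+\frac1\theta(x_{k+1}-x_k)\). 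This is the standard identity for Nesterov's constant-step scheme (Nesterov 2004, scheme 2.2.19), and I would verify it by induction on \(k\), with base case \(z_0=x_0\).

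Assuming this identity holds, \((1+\theta)y_{k+1}=x_{k+1}+\theta x_k+(x_{k+1}-x_k)\) yields exactly \(y_{k+1}=x_{k+1}+\frac{1-\theta}{1+\theta}(x_{k+1}-x_k)\). Since \(x_{k+1}=y_k-\frac1L\nabla f(y_k)\) in both schemes, the two \((x_k,y_k)\)-sequences coincide by induction.
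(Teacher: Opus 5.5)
Your strategy is the paper's: invert the \(y\)-update, use \(\nabla f(y_k)=L(y_k-x_{k+1})\) and \(\theta^2L/\mu=1\) to eliminate \(z\), then substitute into \((1+\theta)y_{k+1}=x_{k+1}+\theta z_{k+1}\). Your verification of \(\omega=\ell=1\) is also fine. But the central simplification is wrong as written. In
\[
\theta z_{k+1}=(1-\theta)\bigl((1+\theta)y_k-x_k\bigr)+\theta^2y_k-(y_k-x_{k+1})
\]
the coefficient of \(y_k\) is \((1-\theta^2)+\theta^2-1=0\). So the right-hand side equals \(x_{k+1}-(1-\theta)x_k\), not \(x_{k+1}-(1-\theta)(x_k-y_k)\). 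The ``leftover \(y_k\) term'' you single out as the main obstacle is an arithmetic slip. As the proposal stands, your derived formula contradicts the identity \(z_{k+1}=x_k+\frac1\theta(x_{k+1}-x_k)\) that you then assume; the two differ by \(\frac{1-\theta}{\theta}y_k\). So the one step that carries the whole proof is never established. Pointing to Nesterov's scheme does not replace checking that the \(z\)-sequence of \Cref{alg:rs-NAG-sc} satisfies this identity.

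Neither proposed repair would work as stated. Substituting \(y_k=x_{k+1}+\frac1L\nabla f(y_k)\) into your incorrect formula only puts a gradient term into the \(y\)-recursion. That can never match the classical momentum update, because the residual was not there to begin with. No induction from \(z_0=x_0\) is needed either. The identity follows in one step from \(\theta z_k=(1+\theta)y_k-x_k\), which holds at every \(k\) by the definition of \(y_k\). With the arithmetic corrected you get \(z_{k+1}=\frac1\theta x_{k+1}-\frac{1-\theta}{\theta}x_k=x_{k+1}+\frac{1-\theta}{\theta}(x_{k+1}-x_k)\) directly, which is exactly the paper's expression. Your closing computation \(y_{k+1}=x_{k+1}+\frac{1-\theta}{1+\theta}(x_{k+1}-x_k)\) and the induction on \((x_k,y_k)\) then complete the proof correctly.
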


Therefore, the proposed randomized-subspace methods in the strongly convex
setting can be viewed as generalizations of the standard NAG.

We next state the convergence guarantee for RS-NAG-SC.

\begin{theorem}
\label{thm:rs-nesterov-strong}
Suppose 
\Cref{ass:matrix-smooth-common,ass:sketch-common} and \Cref{ass:func} hold,
and let \(\{x_k,y_k,z_k\}\) be generated by
\Cref{alg:rs-NAG-sc}. Define
\[
\Delta_0\coloneqq f(x_0)-f^\star.
\]
Then the parameter \(\theta\) in \Cref{alg:rs-NAG-sc} satisfies
\(\theta\in(0,1]\), and for all \(N\ge0\),
\begin{equation}
\E[f(x_N)-f^\star]
\le
2(1-\theta)^N\Delta_0.
\label{eq:main-rate}
\end{equation}
In particular, the iteration complexity to guarantee
\(\E[f(x_N)-f^\star]\le\epsilon\) is
\[
N
=
\mathcal{O}\!\left(
\sqrt{\frac{L\omega\ell}{\mu}}\,
\log\frac{\Delta_0}{\epsilon}
\right),
\]
and since one iteration uses \(r\) oracle calls, the oracle complexity is
\begin{equation}
\#\mathrm{Oracle}
=
rN
=
\mathcal{O}\!\left(
\sqrt{\frac{L\omega\ell r^2}{\mu}}\,
\log\frac{\Delta_0}{\epsilon}
\right).
\label{eq:oracle-lower-omega}
\end{equation}
\end{theorem}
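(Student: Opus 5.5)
We prove \Cref{thm:rs-nesterov-strong} with a Lyapunov argument. The potential is
\[
\Phi_k \coloneqq f(x_k)-f^\star+\frac{\mu}{2}\norm{z_k-x^\star}^2 .
\]
The goal is the one-step contraction \(\E_k[\Phi_{k+1}]\le(1-\theta)\Phi_k\), where \(\E_k\) denotes conditional expectation given \(P_0,\dots,P_{k-1}\). Since \(z_0=x_0\), strong convexity gives \(\frac{\mu}{2}\norm{x_0-x^\star}^2\le\Delta_0\), so \(\Phi_0\le 2\Delta_0\). Taking total expectations and iterating then yields \eqref{eq:main-rate}, because \(f(x_N)-f^\star\le\Phi_N\).

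The range \(\theta\in(0,1]\) comes from \Cref{prop:ell-omega-redundant}. Matrix smoothness together with \(\mu\)-strong convexity forces \(\mu\le L\), and \(\omega\ell\ge1\) then gives \(\theta^2=\mu/(L\omega\ell)\le1\). The iteration and oracle complexities follow from \((1-\theta)^N\le e^{-\theta N}\) and the fact that each iteration costs \(r\) queries.

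\textbf{Descent step.} Write \(g=\nabla f(y_k)\) and \(S=P_kP_k^\top\). Apply \eqref{eq:matrix-smooth-common} with \(y-x=-\frac{1}{L\ell}Sg\) and take expectation using \eqref{eq:unbiased-common} and \eqref{eq:Lsmooth-sketch-common}:
\[
\E_k f(x_{k+1})\le f(y_k)-\frac{1}{L\ell}\norm{g}^2+\frac{1}{2L^2\ell^2}\,g^\top \E[S\Lmat S]g\le f(y_k)-\frac{1}{2L\ell}\norm{g}^2 .
\]

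\textbf{Estimate-sequence step.} Write \(z_{k+1}-x^\star=(1-\theta)(z_k-x^\star)+\theta(y_k-x^\star)-\frac{\theta}{\mu}Sg\) and expand \(\frac{\mu}{2}\E_k\norm{z_{k+1}-x^\star}^2\).
\begin{itemize}
\item For the deterministic part, convexity of \(\norm{\cdot}^2\) gives at most \(\frac{\mu}{2}\bigl[(1-\theta)\norm{z_k-x^\star}^2+\theta\norm{y_k-x^\star}^2\bigr]\).
\item The cross term is \(-\theta\ip{g}{(1-\theta)z_k+\theta y_k-x^\star}\), using unbiasedness of \(S\).
\item The quadratic term is bounded via \eqref{eq:second-moment-common} by \(\frac{\theta^2\omega}{2\mu}\norm{g}^2=\frac{1}{2L\ell}\norm{g}^2\). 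This equality holds by the choice of \(\theta\), so this term exactly cancels the gain from the descent step. This cancellation is the key calibration.
\end{itemize}

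\textbf{Combining the terms.} From the definition of \(y_k\), \((1+\theta)y_k=x_k+\theta z_k\), so \(\theta z_k=(1+\theta)y_k-x_k\). Then
\[
(1-\theta)z_k+\theta y_k-x^\star=(y_k-x^\star)+\frac{1-\theta}{\theta}(y_k-x_k).
\]
The cross term therefore becomes
\[
-\theta\ip{g}{y_k-x^\star}-(1-\theta)\ip{g}{y_k-x_k}.
\]
We bound the two inner products separately:
\begin{itemize}
\item \(-\theta\ip{g}{y_k-x^\star}\le\theta\bigl(f^\star-f(y_k)\bigr)-\frac{\theta\mu}{2}\norm{y_k-x^\star}^2\) by strong convexity;
\item \(-(1-\theta)\ip{g}{y_k-x_k}\le(1-\theta)\bigl(f(x_k)-f(y_k)\bigr)\) by convexity.
\end{itemize}
Summing everything, the \(f(y_k)\) terms cancel (coefficient \(1-\theta-(1-\theta)=0\)). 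The \(\norm{y_k-x^\star}^2\) terms cancel as well. The \(\norm{g}^2\) terms cancel by the calibration above. What remains is
\[
\E_k\Phi_{k+1}\le(1-\theta)\bigl(f(x_k)-f^\star\bigr)+(1-\theta)\frac{\mu}{2}\norm{z_k-x^\star}^2=(1-\theta)\Phi_k .
\]

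The main obstacle is this bookkeeping: the coupling coefficient \(\theta/(1+\theta)\) in \(y_k\), the step \(\theta/\mu\), and the choice \(\theta^2\omega/\mu=1/(L\ell)\) must align exactly so that no leftover terms survive. In particular, \(\omega\) controls the \(z\)-step noise while \(\ell\) controls the \(x\)-step descent, and this split is exactly why the rate involves the product \(\omega\ell\). If the deterministic-part bound is too loose, I would instead expand \(\norm{(1-\theta)a+\theta b}^2\) exactly and absorb the extra \(-\theta(1-\theta)\norm{z_k-y_k}^2\) term, which can only help.
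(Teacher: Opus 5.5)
Your proposal is correct and follows the paper's proof essentially step for step. It uses the same Lyapunov function $f(x_k)-f^\star+\frac{\mu}{2}\|z_k-x^\star\|^2$, the same calibration $\theta^2\omega/\mu=1/(L\ell)$ to cancel the $\|g\|^2$ terms, and the same rewriting of the cross term via $\theta(z_k-y_k)=y_k-x_k$. Your Jensen bound on $\|(1-\theta)a+\theta b\|^2$ and your plain-convexity bound for the $(x_k,y_k)$ pair just drop, up front, the same nonpositive terms that the paper carries and then discards.
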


Compared with the strongly convex RS-GD bound in
\Cref{tab:oracle-comparison-intro}, RS-NAG-SC improves the condition-number
dependence of the oracle complexity from \(L/\mu\) to \(\sqrt{L/\mu}\),
matching the acceleration effect of NAG over GD.

\section{Examples of sketches: Haar, coordinate, and Gaussian}
\label{sec:sketch-examples}
We verify \Cref{ass:sketch-common} for three standard sketches: Haar,
Coordinate, and Gaussian sketches. We then compare the resulting
sketch-dependent oracle-complexity factors. Proofs for this section are
deferred to \Cref{app:missing-proofs-sketch}.
Throughout this section, let \(L=\|\Lmat\|\), where \(\Lmat\) is the matrix in
\Cref{ass:matrix-smooth-common}, and define
\[
r_{\mathrm{eff}}\coloneqq\frac{\tr(\Lmat)}{L},
\qquad
\delta_{\mathrm{diag}}\coloneqq\frac{\|\diag(\Lmat)\|}{L}.
\]

The oracle-complexity bounds in
\Cref{thm:rs-nesterov-convex,thm:rs-nesterov-strong} are governed by the
sketch-dependent factor
\(\sqrt{\omega\ell r^2}.
\)
For full-dimensional Nesterov, the corresponding factor is \(d\). Hence
\(\sqrt{\omega\ell r^2}<d\) means an improvement over full-dimensional Nesterov in our oracle bound.

\begin{proposition}
\label{prop:verify-sketch-common}
Let \(d\ge2\), \(1\le r\le d\), \(\Lmat\succeq0\), and
\(L=\|\Lmat\|>0\). 
Then \Cref{ass:sketch-common} holds with the constants in
\Cref{tab:sketch_constants} for the following sketches:
Haar \(P=\sqrt{d/r}\,R\), where \(R\) consists of the first \(r\) columns of a
Haar-distributed orthogonal matrix; Coordinate
\(P=\sqrt{d/r}\,S\), where \(S\) consists of \(r\) uniformly sampled distinct
columns of \(I_d\); and Gaussian \(P_{ij}\sim\mathcal N(0,1/r)\) i.i.d.
Moreover,
\[
1\le r_{\mathrm{eff}}\le d,
\qquad
\frac1d\le \delta_{\mathrm{diag}}\le 1,
\qquad
\delta_{\mathrm{diag}}\ge \frac{r_{\mathrm{eff}}}{d}.
\]
\end{proposition}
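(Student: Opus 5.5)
The plan is to verify the three moment conditions of \Cref{ass:sketch-common} separately for each sketch. In each case we compute $\E[PP^\top]$, $\E[(PP^\top)^2]$ and $\E[PP^\top\Lmat PP^\top]$ exactly, or exactly up to a positive semidefinite ordering, as linear expressions in $\Lmat$, $\tr(\Lmat)I_d$ and $\diag(\Lmat)$. We then bound the result by $\ell L I_d$ using $\Lmat\preceq LI_d$, $\tr(\Lmat)=r_{\mathrm{eff}}L$ and $\diag(\Lmat)\preceq \delta_{\mathrm{diag}}LI_d$. The constants obtained this way are the ones that should appear in \Cref{tab:sketch_constants}.

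\emph{Haar and coordinate sketches.} For the Haar sketch, $\Pi:=RR^\top$ is the orthogonal projector onto a uniformly random $r$-dimensional subspace, so $\E[\Pi]=(r/d)I_d$ and hence $\E[PP^\top]=I_d$. Since $(PP^\top)^2=(d/r)PP^\top$, we get $\omega=d/r$.

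For the third moment, $\E[\Pi A\Pi]$ is invariant under conjugation by orthogonal matrices. Consequently, for symmetric $A$ it has the form $\alpha A+\beta\tr(A)I_d$. The constants $\alpha,\beta$ are fixed by two scalar identities: taking traces, and evaluating at $A=I_d$. These give $\alpha+d\beta=r/d$ and a second relation from $\E[\tr(\Pi A\Pi A)]$, or equivalently from the known fourth moments of Haar columns. The outcome is $\ell=(d/r)^2(\alpha+\beta r_{\mathrm{eff}})$.

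For the coordinate sketch, $SS^\top$ is diagonal with entries in $\{0,1\}$, so again $\E[PP^\top]=I_d$ and $\omega=d/r$. Computing $\E[(SS^\top)_{ii}(SS^\top)_{jj}]$ gives
\[
\E[SS^\top\Lmat SS^\top]=\tfrac{r(r-1)}{d(d-1)}\Lmat+\tfrac{r(d-r)}{d(d-1)}\diag(\Lmat),
\]
and therefore $\ell=\frac{d(r-1)}{r(d-1)}+\frac{d(d-r)}{r(d-1)}\delta_{\mathrm{diag}}$.

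\emph{Gaussian sketch.} Write $P=G/\sqrt r$ with $G$ having i.i.d.\ standard normal entries. Then $\E[GG^\top]=rI_d$, so $\E[PP^\top]=I_d$. For the higher moments we use Isserlis' (Wick's) formula: for symmetric $A$,
\[
\E[GG^\top AGG^\top]=r(r+1)A+r\tr(A)I_d .
\]
With $A=I_d$ this yields $\omega=(d+r+1)/r$. With $A=\Lmat$ it yields $\ell=1+\frac1r+\frac{r_{\mathrm{eff}}}{r}$.

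\emph{Scalar inequalities.} Let $0\le\lambda_i\le L$ be the eigenvalues of $\Lmat$, with $\max_i\lambda_i=L$. Then $L\le\tr(\Lmat)\le dL$, which gives $1\le r_{\mathrm{eff}}\le d$. Each diagonal entry satisfies $\Lmat_{ii}=e_i^\top\Lmat e_i\in[0,L]$, which gives $\delta_{\mathrm{diag}}\le1$. The maximum of the diagonal entries is at least their average, so $\|\diag(\Lmat)\|\ge\tr(\Lmat)/d$; this is $\delta_{\mathrm{diag}}\ge r_{\mathrm{eff}}/d$. Combining with $r_{\mathrm{eff}}\ge1$ gives $\delta_{\mathrm{diag}}\ge1/d$.

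\emph{Main obstacle.} The delicate step is the Haar fourth-moment computation, that is, determining $\alpha$ and $\beta$ exactly. I would derive them from the moments of a uniform unit vector, $\E[u_i^4]=3/(d(d+2))$ and $\E[u_i^2u_j^2]=1/(d(d+2))$, together with the cross-column correlations of Haar frames. If exact values become messy, a simpler route is to bound $\E[\Pi\Lmat\Pi]$ using only $\Pi\Lmat\Pi\preceq L\Pi$ for the $\Lmat$-part. This route would need to be reconciled with whatever constants the table states, and I would also need to check that the resulting constants satisfy the convention $\ell\le\omega$.
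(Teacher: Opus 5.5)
Your coordinate computation (via the pairwise inclusion probability $r(r-1)/(d(d-1))$) is correct and reproduces \Cref{tab:sketch_constants}. So are your Gaussian computation (via Isserlis, $\E[GG^\top AGG^\top]=r(r+1)A+r\tr(A)I_d$), the Haar value $\omega=d/r$, and the scalar inequalities. Where the paper simply imports the closed forms of $\E[PP^\top\Lmat PP^\top]$ from \cite{flynn2024}, you derive them, which makes the argument self-contained.

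The gap is the Haar $\ell$, the one genuinely nontrivial entry of the table, which you never determine. The two identities you name for fixing $\alpha,\beta$ are taking traces and setting $A=I_d$. Both collapse to the same equation $\alpha+d\beta=r/d$, because each uses only $\Pi^2=\Pi$ and $\E[\Pi]=(r/d)I_d$, so they carry no fourth-moment information. Your fallback $\Pi\Lmat\Pi\preceq L\Pi$ only yields $\ell=d/r=\omega$. That is the trivial choice from \Cref{prop:ell-omega-redundant}, not the constant the statement asserts.

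You can get the missing relation from the sphere moments you already listed; no cross-column correlations are needed. Take $A=e_1e_1^\top$ and compare $(1,1)$ entries: $\alpha+\beta=\E[\Pi_{11}^2]$. Here $\Pi_{11}=\|R^\top e_1\|^2=\sum_{j\le r}u_j^2$, where $u$, the first row of the Haar matrix, is uniform on the unit sphere. Hence $\alpha+\beta=\frac{3r+r(r-1)}{d(d+2)}=\frac{r(r+2)}{d(d+2)}$. Combining this with $\alpha+d\beta=r/d$ (dividing by $d-1$ uses $d\ge2$) gives
\[
\alpha=\frac{r}{d}\bigl(1-\beta_{\mathrm T}\bigr),\qquad \beta=\frac{r}{d^2}\,\beta_{\mathrm T},\qquad \beta_{\mathrm T}\coloneqq\frac{d(d-r)}{(d+2)(d-1)},
\]
where $\beta_{\mathrm T}$ is the table's $\beta$. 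Since $\beta_{\mathrm T}\le d/(d+2)<1$, you have $\alpha>0$, so the bound $\alpha\Lmat\preceq\alpha LI_d$ is legitimate. Then $(d/r)^2(\alpha+\beta r_{\mathrm{eff}})=\frac{d}{r}\bigl(1-\beta_{\mathrm T}+\beta_{\mathrm T}r_{\mathrm{eff}}/d\bigr)$, which is exactly the Haar entry of \Cref{tab:sketch_constants}. With this step added, your proof is complete.
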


\begin{table}[t]
\centering
\caption{Sketch-dependent constants for Haar, coordinate, and Gaussian sketches.
Here \(\beta=d(d-r)/((d+2)(d-1))\).}
\label{tab:sketch_constants}
\begin{tabular}{lccc}
\toprule
Sketch & \(\omega\) & \(\ell\) & \(\sqrt{\omega\ell r^2}\) \\
\midrule
Haar &
\(\dfrac{d}{r}\) &
\(\dfrac{d}{r}\left(1-\beta+\beta\dfrac{r_{\rm eff}}{d}\right)\) &
\(d\sqrt{1-\beta+\beta\dfrac{r_{\rm eff}}{d}}\) \\
Coordinate &
\(\dfrac{d}{r}\) &
\(\dfrac{d}{r}\left(\dfrac{r-1}{d-1}
+\dfrac{d-r}{d-1}\delta_{\rm diag}\right)\) &
\(d\sqrt{\dfrac{r-1}{d-1}
+\dfrac{d-r}{d-1}\delta_{\rm diag}}\) \\
Gaussian &
\(\dfrac{d+r+1}{r}\) &
\(\dfrac{r+1+r_{\rm eff}}{r}\) &
\(\sqrt{(d+r+1)(r+1+r_{\rm eff})}\) \\
\bottomrule
\end{tabular}
\end{table}

We next minimize the sketch-dependent oracle factor
\(\sqrt{\omega\ell r^2}\) over the sketch dimension \(r\).
Here \(\omega\) and \(\ell\) are the values in \Cref{tab:sketch_constants}, which
generally depend on \(r\).

\begin{proposition}[Optimal sketch dimension and comparison of sketch constants]
\label{prop:comp-sketch-basic}
For each of the Haar, Coordinate, and Gaussian sketches, with \(\omega\) and
\(\ell\) chosen as in \Cref{tab:sketch_constants}, the factor
\(\sqrt{\omega\ell r^2}\) is minimized over \(r\in\{1,\dots,d\}\) at \(r=1\).
At \(r=1\), the Haar, Coordinate, and Gaussian factors, denoted by
\(Q_{\mathrm H}\), \(Q_{\mathrm C}\), and \(Q_{\mathrm G}\), respectively, are
as follows.

\smallskip
\noindent\normalfont\textbf{Values and ranges.}
\[
\begin{aligned}
Q_{\mathrm H}
&=d\sqrt{\frac{r_{\mathrm{eff}}+2}{d+2}},
&
d\sqrt{\frac{3}{d+2}}
&\le Q_{\mathrm H}\le d,\\
Q_{\mathrm C}
&=d\sqrt{\delta_{\mathrm{diag}}},
&
\sqrt d
&\le Q_{\mathrm C}\le d,\\
Q_{\mathrm G}
&=\sqrt{(d+2)(r_{\mathrm{eff}}+2)},
&
\sqrt{3(d+2)}
&\le Q_{\mathrm G}\le d+2.
\end{aligned}
\]

\smallskip
\noindent\normalfont\textbf{Relations between sketch factors.}
\[
Q_{\mathrm G}=\left(1+\frac{2}{d}\right)Q_{\mathrm H},
\qquad
Q_{\mathrm H}\le \sqrt3\,Q_{\mathrm C}.
\]
Moreover, for \(\Lmat=e_1e_1^\top\),
\[
Q_{\mathrm H}
=
\sqrt{\frac{3}{d+2}}\,Q_{\mathrm C}.
\]
\end{proposition}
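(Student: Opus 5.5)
The plan is to treat each sketch separately, writing $g(r)\coloneqq\omega\ell r^2$ from \Cref{tab:sketch_constants} as an explicit function of $r$, showing $g$ is nondecreasing on $\{1,\dots,d\}$, and then evaluating at $r=1$.

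\textbf{Haar.} Here $g_{\mathrm H}(r)=d^2\bigl(1-\beta(1-r_{\mathrm{eff}}/d)\bigr)$ with $\beta=d(d-r)/((d+2)(d-1))$. Since $r_{\mathrm{eff}}\le d$ by \Cref{prop:verify-sketch-common}, the factor $1-r_{\mathrm{eff}}/d$ is nonnegative. Also $\beta$ is decreasing in $r$, so $g_{\mathrm H}$ is nondecreasing. At $r=1$ we have $\beta=d/(d+2)$, so $1-\beta+\beta r_{\mathrm{eff}}/d=(2+r_{\mathrm{eff}})/(d+2)$, which gives $Q_{\mathrm H}$.

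\textbf{Coordinate.} Here $g_{\mathrm C}(r)=d^2\bigl(\delta_{\mathrm{diag}}+\tfrac{r-1}{d-1}(1-\delta_{\mathrm{diag}})\bigr)$. This is nondecreasing in $r$ because $\delta_{\mathrm{diag}}\le1$, and at $r=1$ it gives $Q_{\mathrm C}=d\sqrt{\delta_{\mathrm{diag}}}$.

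\textbf{Gaussian.} Here $g_{\mathrm G}(r)=(d+r+1)(r+1+r_{\mathrm{eff}})$, which is a product of increasing positive terms. At $r=1$ it equals $(d+2)(r_{\mathrm{eff}}+2)$.

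The ranges then follow by plugging in the bounds $1\le r_{\mathrm{eff}}\le d$ and $1/d\le\delta_{\mathrm{diag}}\le1$ from \Cref{prop:verify-sketch-common}. For example, $r_{\mathrm{eff}}=1$ gives $d\sqrt{3/(d+2)}$ and $r_{\mathrm{eff}}=d$ gives $d$. Likewise $\delta_{\mathrm{diag}}=1/d$ gives $\sqrt d$, and $r_{\mathrm{eff}}=d$ gives $Q_{\mathrm G}=d+2$.

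For the relations between the factors, I compute directly:
\[
Q_{\mathrm H}\,\frac{d+2}{d}=\sqrt{(d+2)(r_{\mathrm{eff}}+2)}=Q_{\mathrm G}.
\]
For $Q_{\mathrm H}\le\sqrt3\,Q_{\mathrm C}$, squaring reduces it to $(r_{\mathrm{eff}}+2)/(d+2)\le3\delta_{\mathrm{diag}}$. I will use $\delta_{\mathrm{diag}}\ge r_{\mathrm{eff}}/d$ from \Cref{prop:verify-sketch-common} and $\delta_{\mathrm{diag}}\ge1/d$, which together give $3\delta_{\mathrm{diag}}\ge (r_{\mathrm{eff}}+2)/d$. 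Since $d\le d+2$, this bounds $(r_{\mathrm{eff}}+2)/(d+2)$. This is the only mildly nontrivial step, and the key is to split $3\delta_{\mathrm{diag}}$ as one copy compared with $r_{\mathrm{eff}}/d$ plus two copies compared with $1/d$.

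Finally, for $\Lmat=e_1e_1^\top$ we have $L=1$, $r_{\mathrm{eff}}=1$ and $\delta_{\mathrm{diag}}=1$. Hence $Q_{\mathrm C}=d$ and $Q_{\mathrm H}=d\sqrt{3/(d+2)}$, which is the claimed identity.

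I expect no serious obstacle beyond the monotonicity signs, which depend on the bounds $r_{\mathrm{eff}}\le d$ and $\delta_{\mathrm{diag}}\le1$.
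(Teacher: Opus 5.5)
Your proposal is correct and follows the paper's proof essentially step by step. You prove monotonicity in $r$ using $r_{\mathrm{eff}}\le d$ and $\delta_{\mathrm{diag}}\le 1$, evaluate at $r=1$, read off the ranges from \Cref{prop:verify-sketch-common}, and compute the ratio $Q_{\mathrm G}/Q_{\mathrm H}=1+2/d$ directly. The only cosmetic difference is in $Q_{\mathrm H}\le\sqrt3\,Q_{\mathrm C}$: you split $3\delta_{\mathrm{diag}}\ge r_{\mathrm{eff}}/d+2/d$, whereas the paper bounds $Q_{\mathrm H}^2/Q_{\mathrm C}^2\le (r_{\mathrm{eff}}+2)/r_{\mathrm{eff}}\le 3$, which uses the same two facts ($\delta_{\mathrm{diag}}\ge r_{\mathrm{eff}}/d$ and $r_{\mathrm{eff}}\ge 1$) arranged differently.
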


\Cref{prop:comp-sketch-basic} shows that the best bound in each sketch family is
attained at \(r=1\). Since the full-dimensional Nesterov factor is \(d\), the
ranges above show that the factor \(Q\) can decrease to about \(\sqrt d\) in
favorable cases. Haar and Coordinate are always no worse than full-dimensional
Nesterov, and
Haar is always better than Gaussian, although the two are nearly identical for
large \(d\). Compared with Coordinate, Haar is never worse by more than
\(\sqrt3\), 
while it can be much better, as shown by the final example.

\section{Numerical Experiments}
\label{sec:experiments}
We evaluate the convex and strongly convex versions of RS-NAG, namely
RS-NAG-C and RS-NAG-SC.
Unless otherwise stated, each curve is the mean over independent random seeds,
and the shaded region denotes mean \(\pm\) one standard deviation across runs.
The seeds determine the Gaussian initialization; for randomized-subspace
methods, they also determine the sampled sketch sequence.

\subsection{Quadratic objectives}
\label{subsec:quadratic}

We first consider four quadratic objectives \(f(x)=\frac12 x^\top \Lmat x\)
on \(\R^d\), with \(d=1000\) and \(f^\star=0\).
These instances are designed to isolate the
effects of the effective rank \(r_{\mathrm{eff}}\) and the diagonal quantity
\(\delta_{\mathrm{diag}}\) appearing in the sketch-dependent constants.
The first two instances are convex but not strongly convex, while the last two
are strongly convex. The diagonal instances have small \(r_{\mathrm{eff}}\) and
large \(\delta_{\mathrm{diag}}\), whereas the dense instances have small
\(\delta_{\mathrm{diag}}\). 
The four matrices \(\Lmat\), together with the corresponding values of
\(L,\mu,r_{\mathrm{eff}}\), and \(\delta_{\mathrm{diag}}\), are specified in
Appendix~\ref{app:quadratic-details}.

We use oracle budget \(10{,}000\).
For randomized-subspace methods, we set \(r=1\) in the main experiments, the
theoretically preferred choice.
Appendix~\ref{app:exp-details-rscan} reports an \(r\)-sweep.
We use \(10\) independent random seeds with
\(x_0\sim\mathcal N(0,I_d)\) for each seed.
For randomized-subspace methods, we consider Haar, Block-coordinate, and
Gaussian sketches. We compare NAG with \emph{RS-NAG-C} in the convex setting
and \emph{RS-NAG-SC} in the strongly convex setting, and plot
\(f(x_k)-f^\star\) vs. oracle calls.

\begin{figure*}[t]
\centering
\begin{minipage}[t]{0.24\textwidth}
    \centering
    \includegraphics[width=\linewidth]{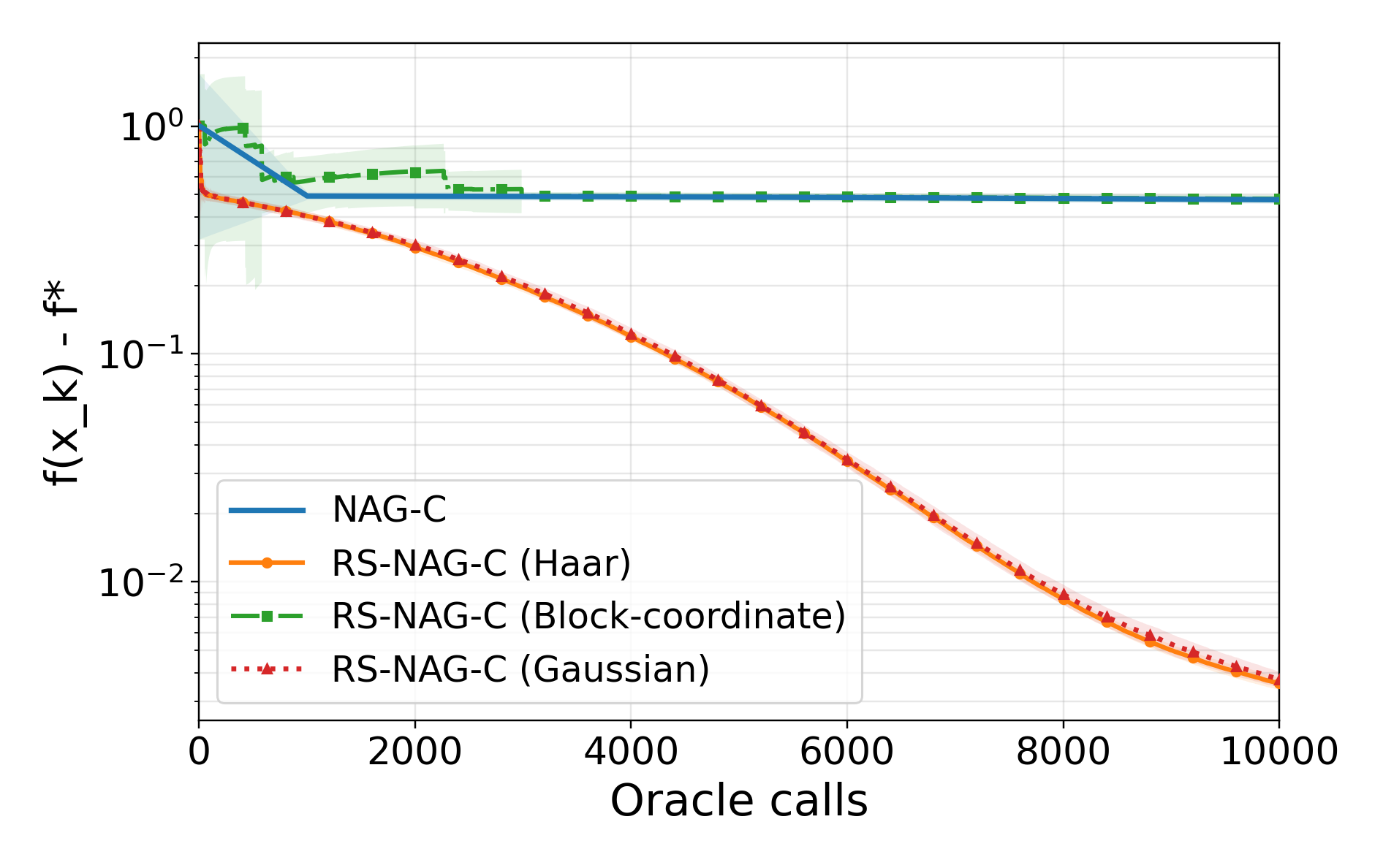}
    \caption*{(a) Convex diagonal}
\end{minipage}
\hfill
\begin{minipage}[t]{0.24\textwidth}
    \centering
    \includegraphics[width=\linewidth]{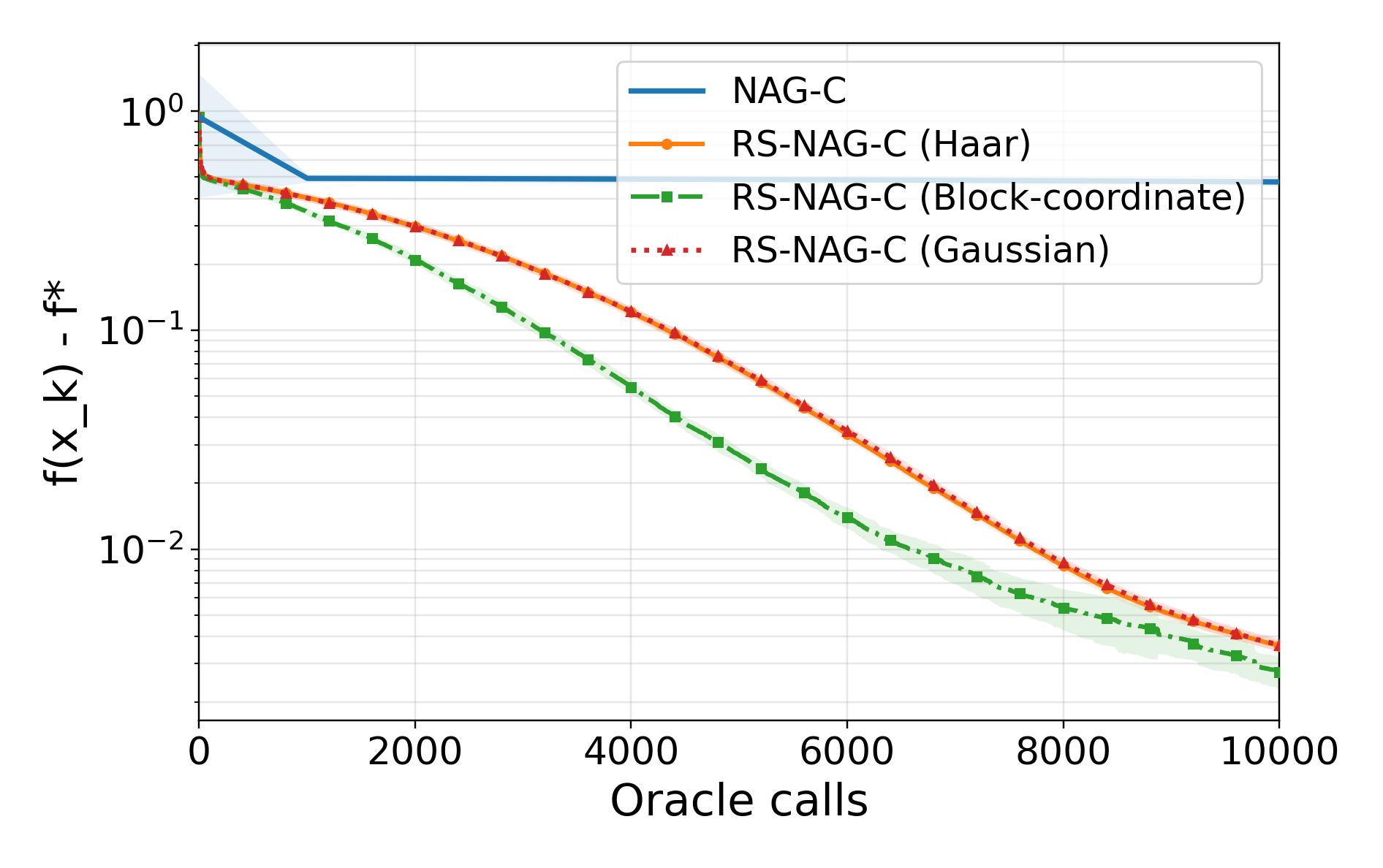}
    \caption*{(b) Convex dense}
\end{minipage}
\hfill
\begin{minipage}[t]{0.24\textwidth}
    \centering
    \includegraphics[width=\linewidth]{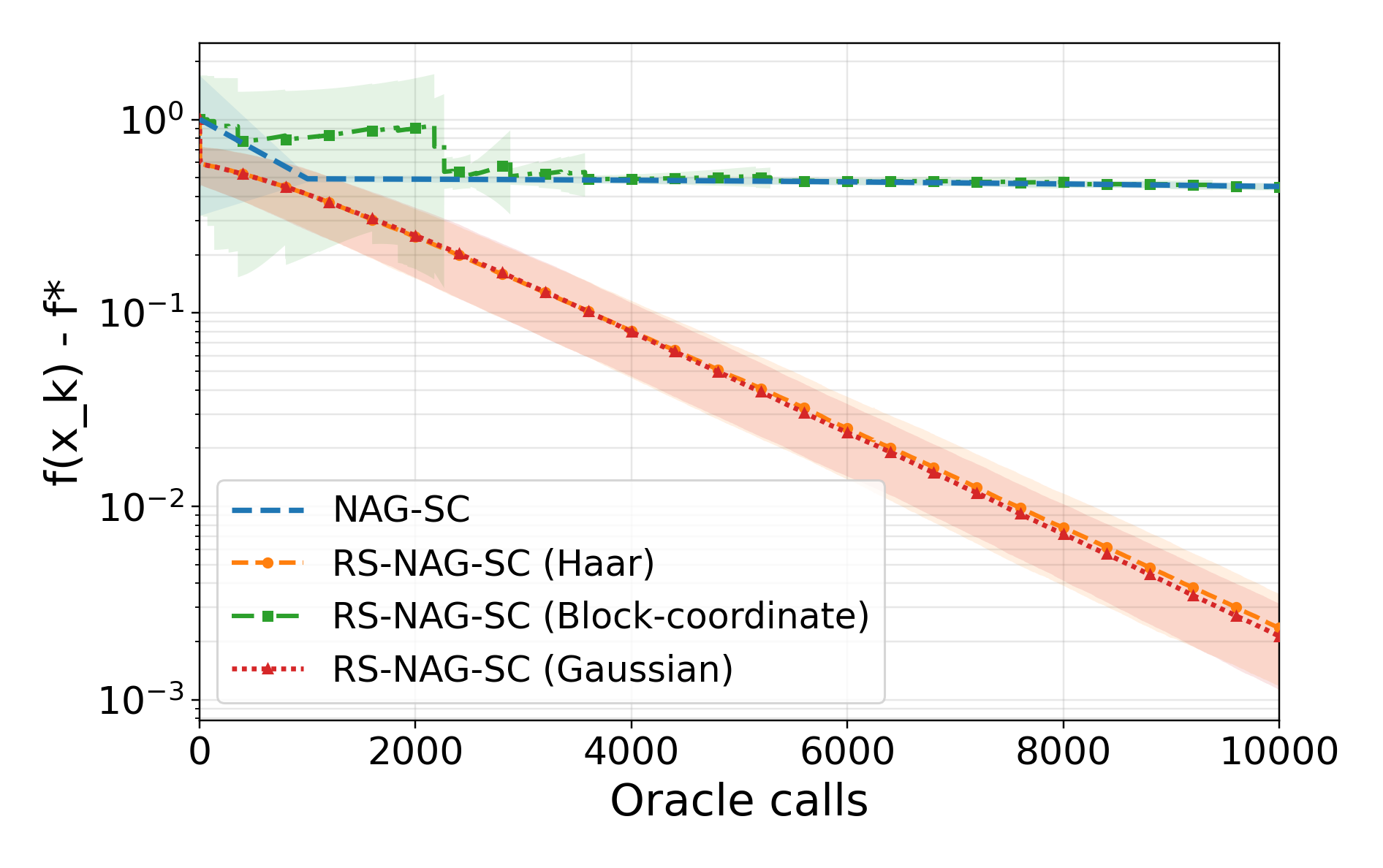}
    \caption*{(c) Strongly convex diagonal}
\end{minipage}
\hfill
\begin{minipage}[t]{0.24\textwidth}
    \centering
    \includegraphics[width=\linewidth]{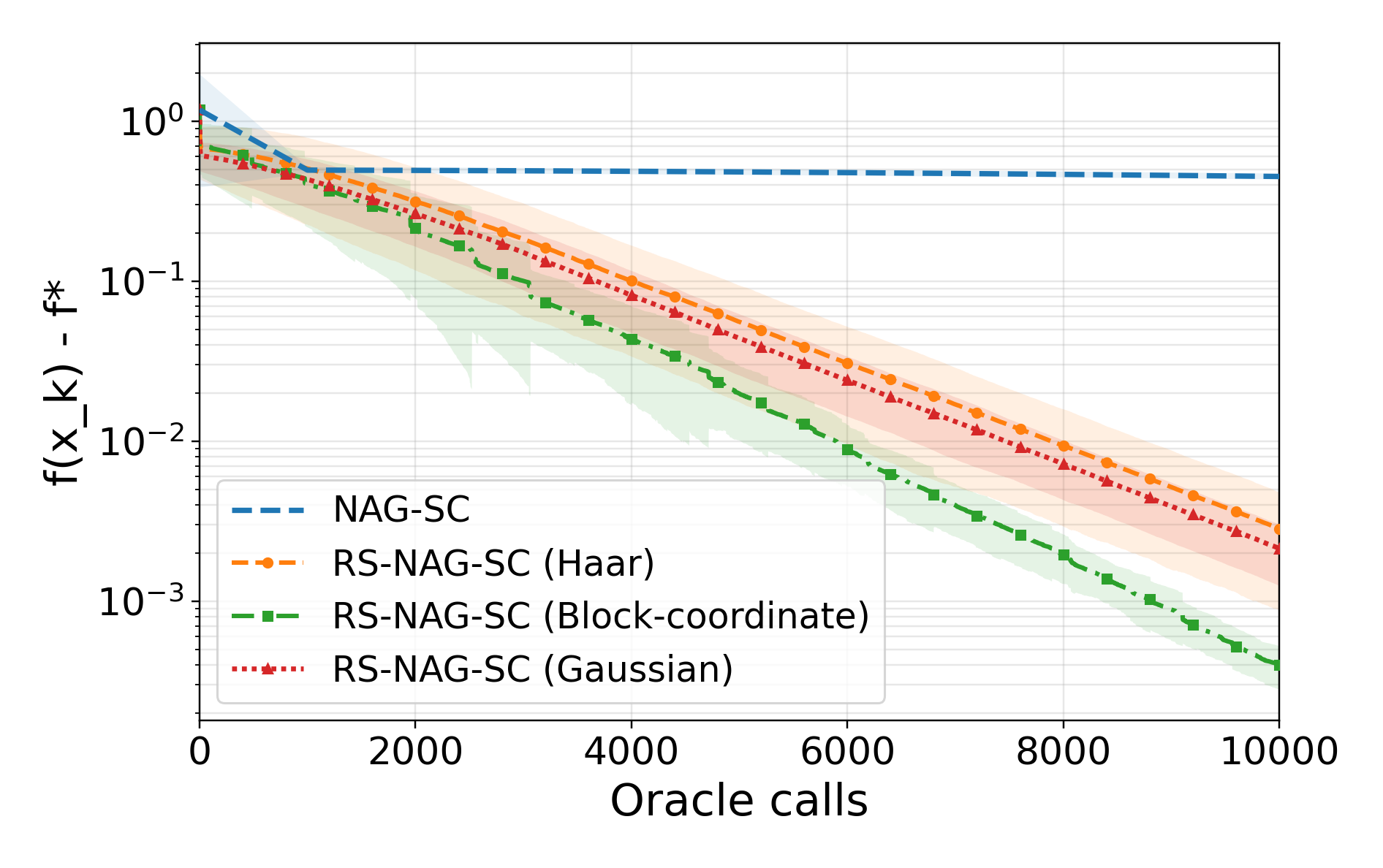}
    \caption*{(d) Strongly convex dense}
\end{minipage}
\caption{Oracle-axis convergence on the four quadratic problems.
The horizontal axis shows the number of oracle calls, and the vertical axis shows the objective gap \(f(x_k)-f^\star\) on a logarithmic scale.}
\label{fig:quadratic-oracle}
\end{figure*}

\paragraph{Discussion.}
The results are consistent with the theoretical predictions in
Proposition~\ref{prop:comp-sketch-basic}. On the diagonal instances, where
\(r_{\mathrm{eff}}\) is small and \(\delta_{\mathrm{diag}}\) is large, Haar and
Gaussian sketches outperform both the Block-coordinate sketch and
full-dimensional Nesterov acceleration. On the dense instances, where
\(\delta_{\mathrm{diag}}\) is small, the Block-coordinate sketch becomes the most
effective, while Haar and Gaussian remain competitive. Thus, the observed
oracle-axis behavior reflects the sketch-dependent quantities
\(Q_{\mathrm H},Q_{\mathrm G},Q_{\mathrm C}\).

\subsection{Logistic regression}
\label{subsec:logreg}
We next evaluate RS-NAG-SC on \(\ell_2\)-regularized logistic regression.
Given binary classification data
\(\{(a_i,y_i)\}_{i=1}^n\), where \(a_i\in\R^d\) and
\(y_i\in\{-1,+1\}\),
we consider
\begin{equation}
f(x)
=
\frac{1}{n}\sum_{i=1}^n \log\!\bigl(1+\exp(-y_i a_i^\top x)\bigr)
+\frac{\mu}{2}\|x\|_2^2,
\qquad \mu>0.
\label{eq:logreg-obj}
\end{equation}
Since \(\mu>0\), the objective is \(\mu\)-strongly convex. For the matrix
smoothness constant, let \(A\in\R^{n\times d}\) be the data matrix whose
\(i\)-th row is \(a_i^\top\), and use
\begin{equation}
\Lmat=\frac{1}{4n}A^\top A+\mu I_d,
\label{eq:logreg-Lmat}
\end{equation}
see Appendix~\ref{app:logreg-smoothness} for the derivation.

We evaluate six real-world binary-classification benchmarks:
colon-cancer~\cite{alon1999colon},
hiva\_agnostic~\cite{guyon2007agnostic},
bioresponse~\cite{hamner2012bioresponse},
gisette~\cite{guyon2005nipsfs},
leukemia~\cite{golub1999leukemia}, and
duke (Duke breast-cancer)~\cite{west2001duke}.
For each dataset, we set \(\mu=1/n\), use \(r=1\), and set
\(L=\|\Lmat\|\), computed numerically as the largest eigenvalue of the matrix
\(\Lmat\) in \eqref{eq:logreg-Lmat}.
We compare GD, NAG-SC,
RS-GD, and RS-NAG-SC with Haar, coordinate, and Gaussian sketches. We initialize
each run from a Gaussian random vector and plot the objective gap
\(f(x_k)-f_{\mathrm{ref}}\) against oracle calls, where \(f_{\mathrm{ref}}\) is
computed by L-BFGS-B \citep{byrd1995limited,zhu1997algorithm}.
The oracle axis can also be read as communication bits under fixed-precision
distributed implementations.\footnote{This follows the communication-bit
accounting in the experiments of \citet{li2020acgd}, where an \(r\)-dimensional
sparse message is counted as \(32r\) bits. The essential comparison is \(d\)
versus \(r\) transmitted scalars per iteration; with \(b\)-bit scalars, these
correspond to \(bd\) and \(br\) bits.}
The results are shown in
\Cref{fig:logreg-real-gap-additional}, and the corresponding
dataset-dependent quantities are summarized in
\Cref{tab:logreg-q-values-additional}. Additional implementation details,
dataset sources, and reference-solver details are provided in
Appendix~\ref{app:experiment-details}.
Appendix~\ref{app:logreg-additional-datasets} 
further reports experiments on six additional real-world datasets.

\begin{figure*}[t]
\centering
\begin{subfigure}[t]{0.32\textwidth}
    \centering
    \includegraphics[width=\linewidth]{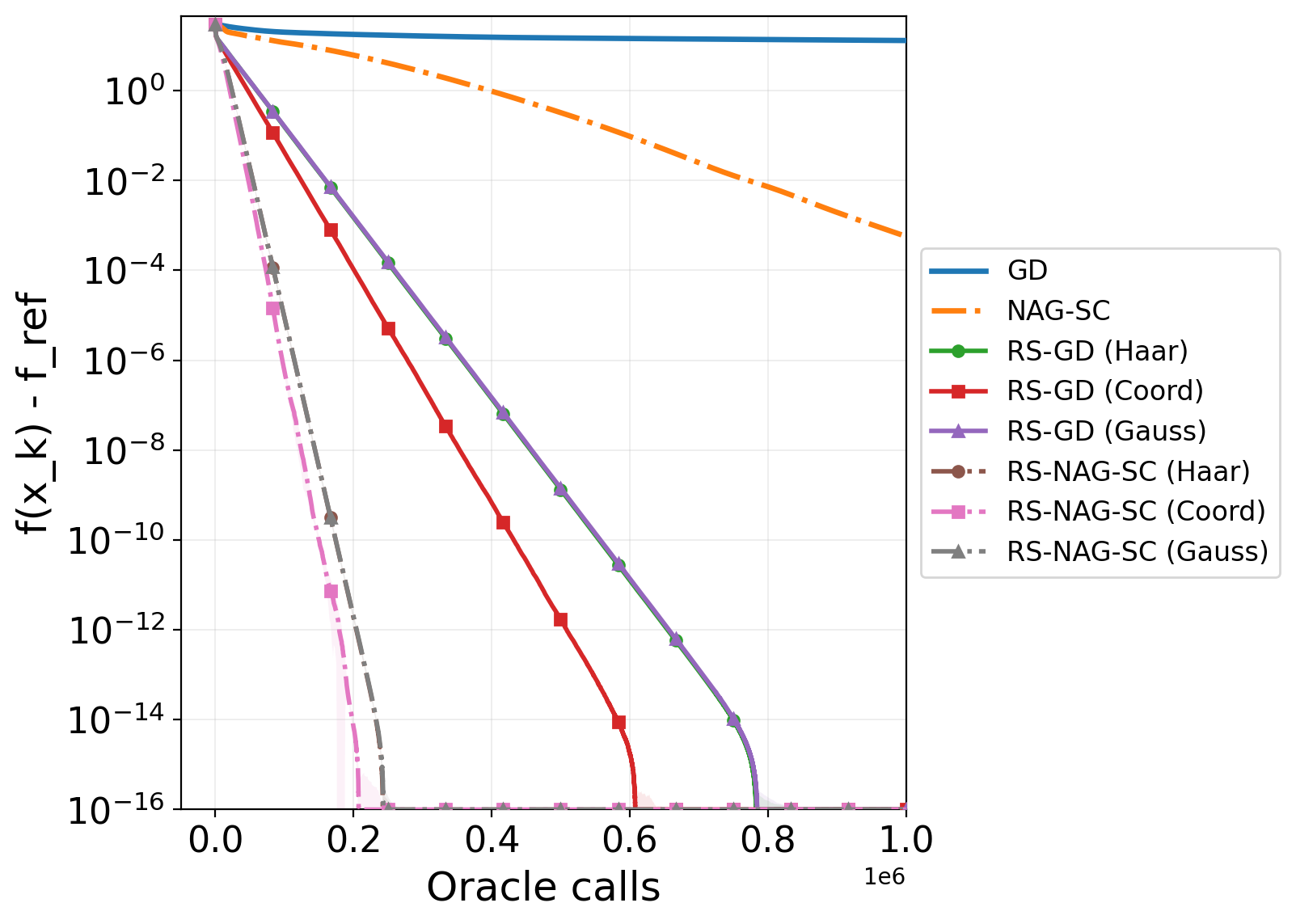}
    \caption{\texttt{colon-cancer}}
\end{subfigure}
\hfill
\begin{subfigure}[t]{0.32\textwidth}
    \centering
    \includegraphics[width=\linewidth]{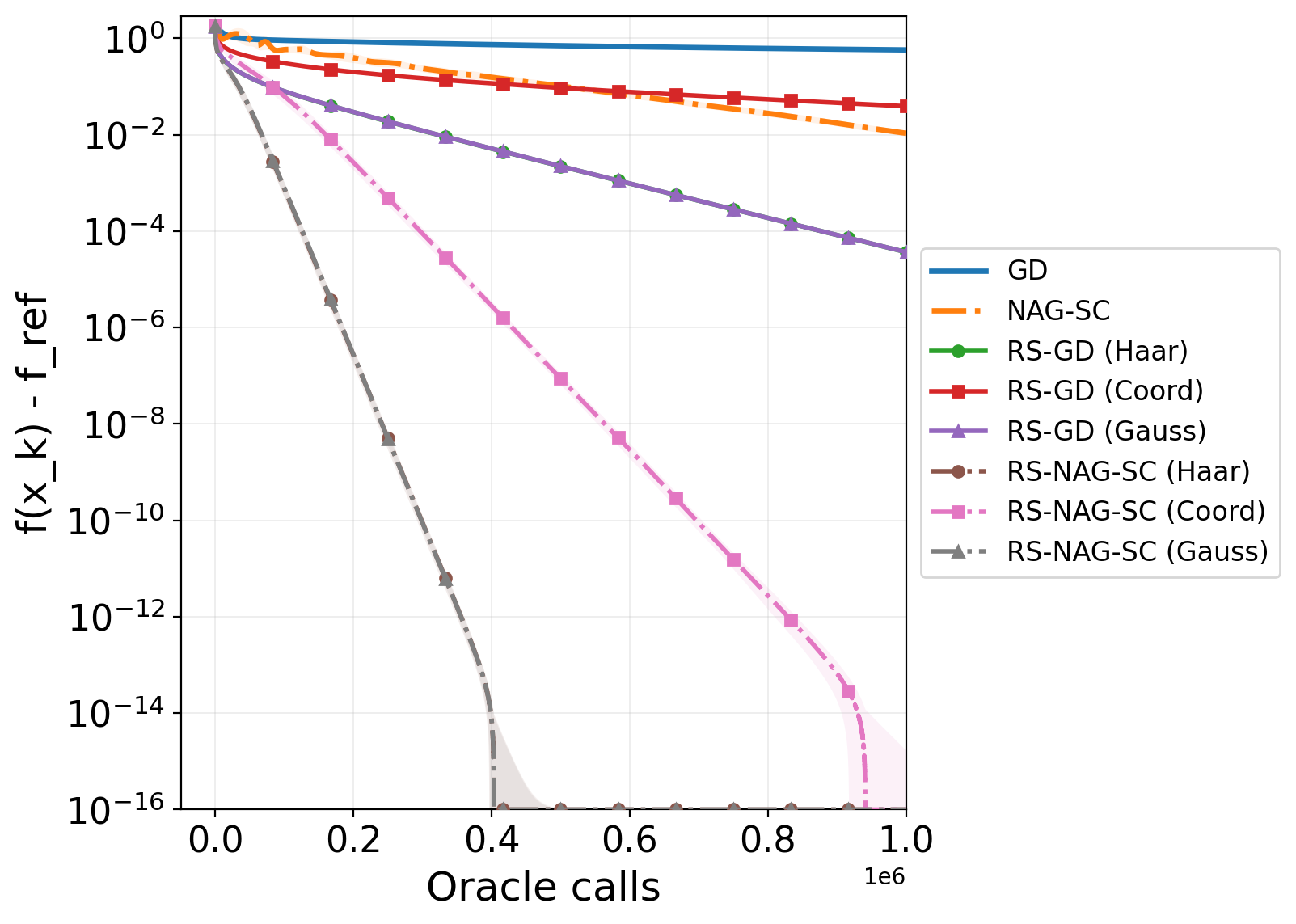}
    \caption{\texttt{hiva\_agnostic}}
\end{subfigure}
\hfill
\begin{subfigure}[t]{0.32\textwidth}
    \centering
    \includegraphics[width=\linewidth]{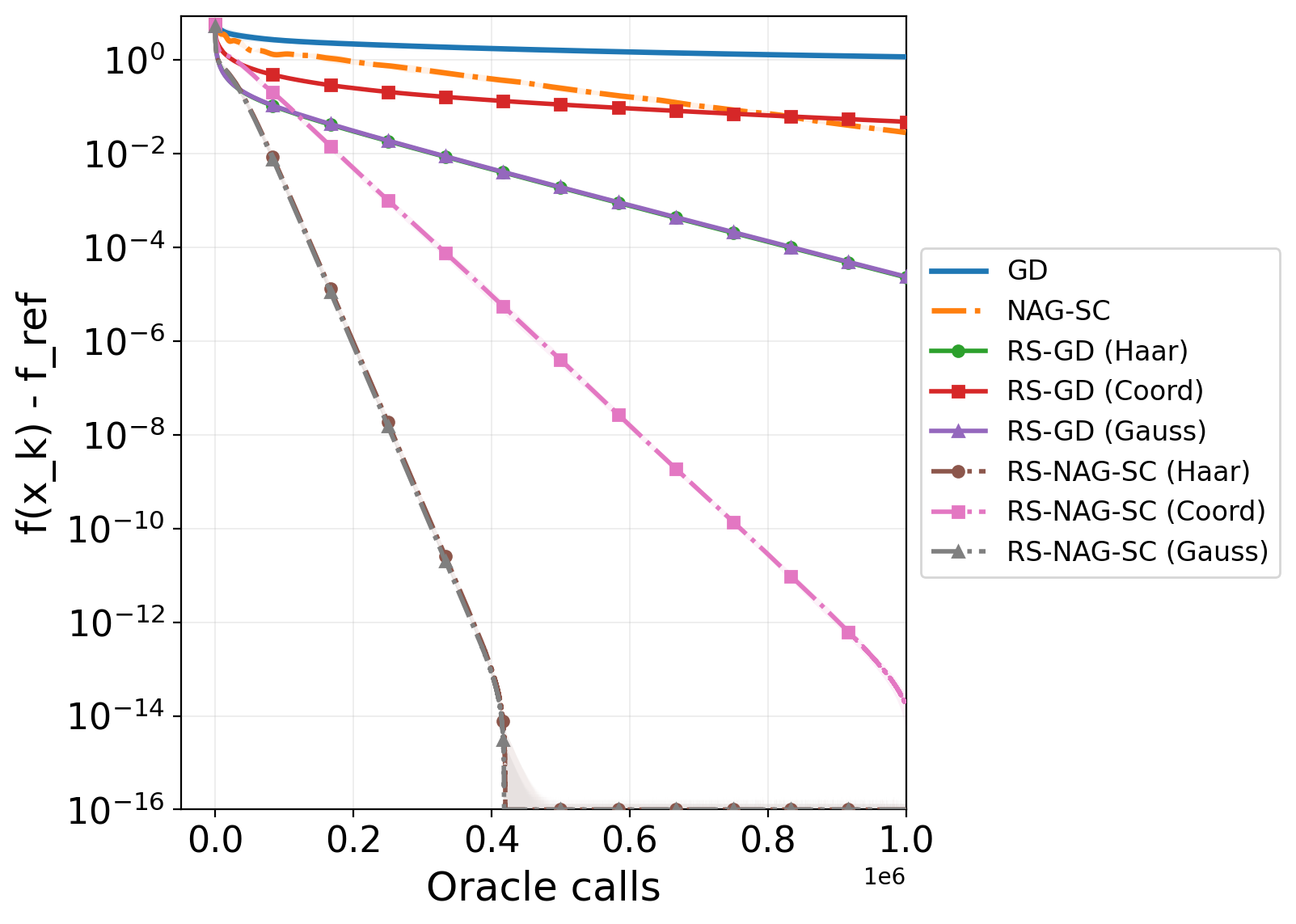}
    \caption{\texttt{bioresponse}}
\end{subfigure}

\vspace{0.6em}

\begin{subfigure}[t]{0.32\textwidth}
    \centering
    \includegraphics[width=\linewidth]{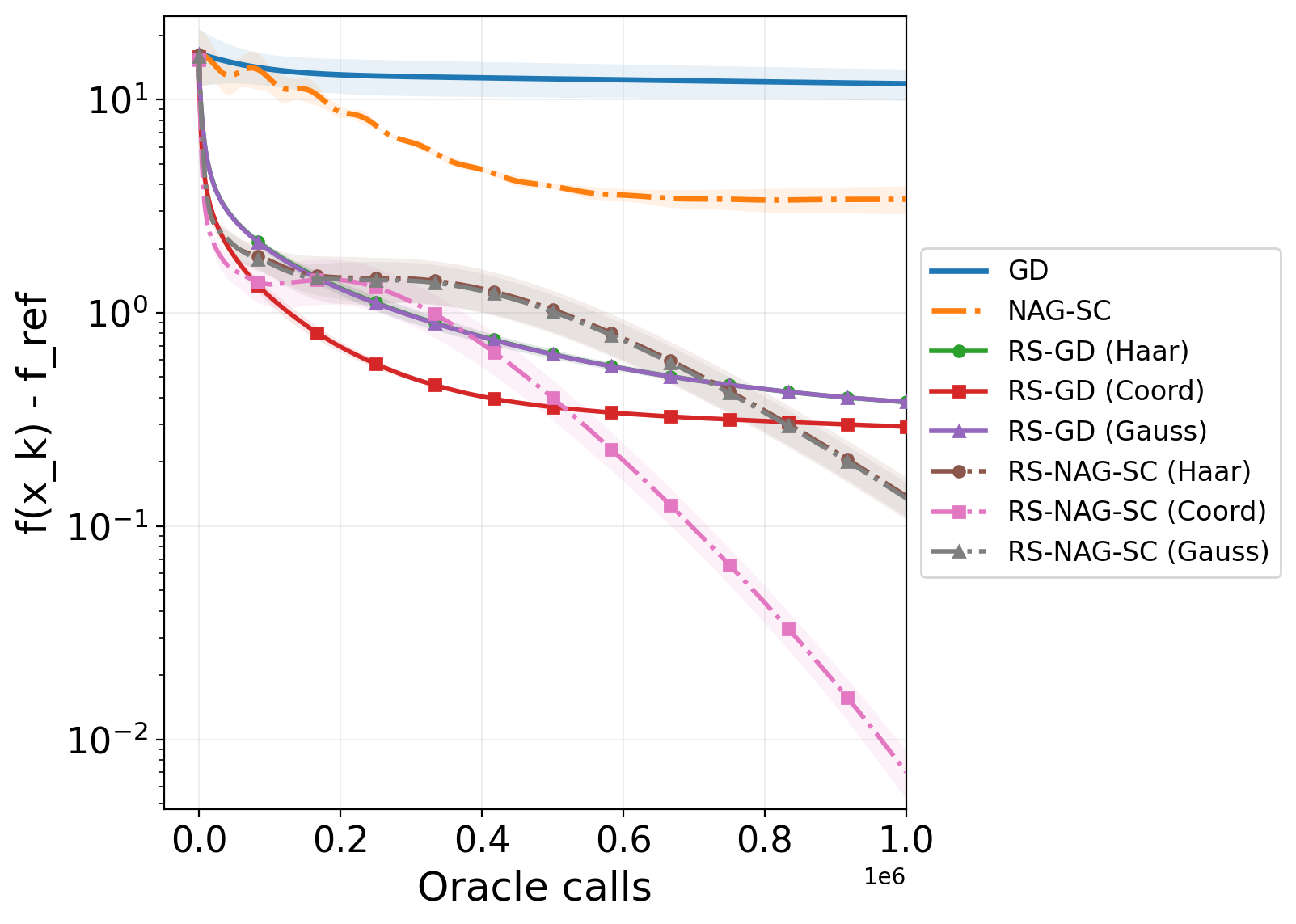}
    \caption{\texttt{gisette}}
\end{subfigure}
\hfill
\begin{subfigure}[t]{0.32\textwidth}
    \centering
    \includegraphics[width=\linewidth]{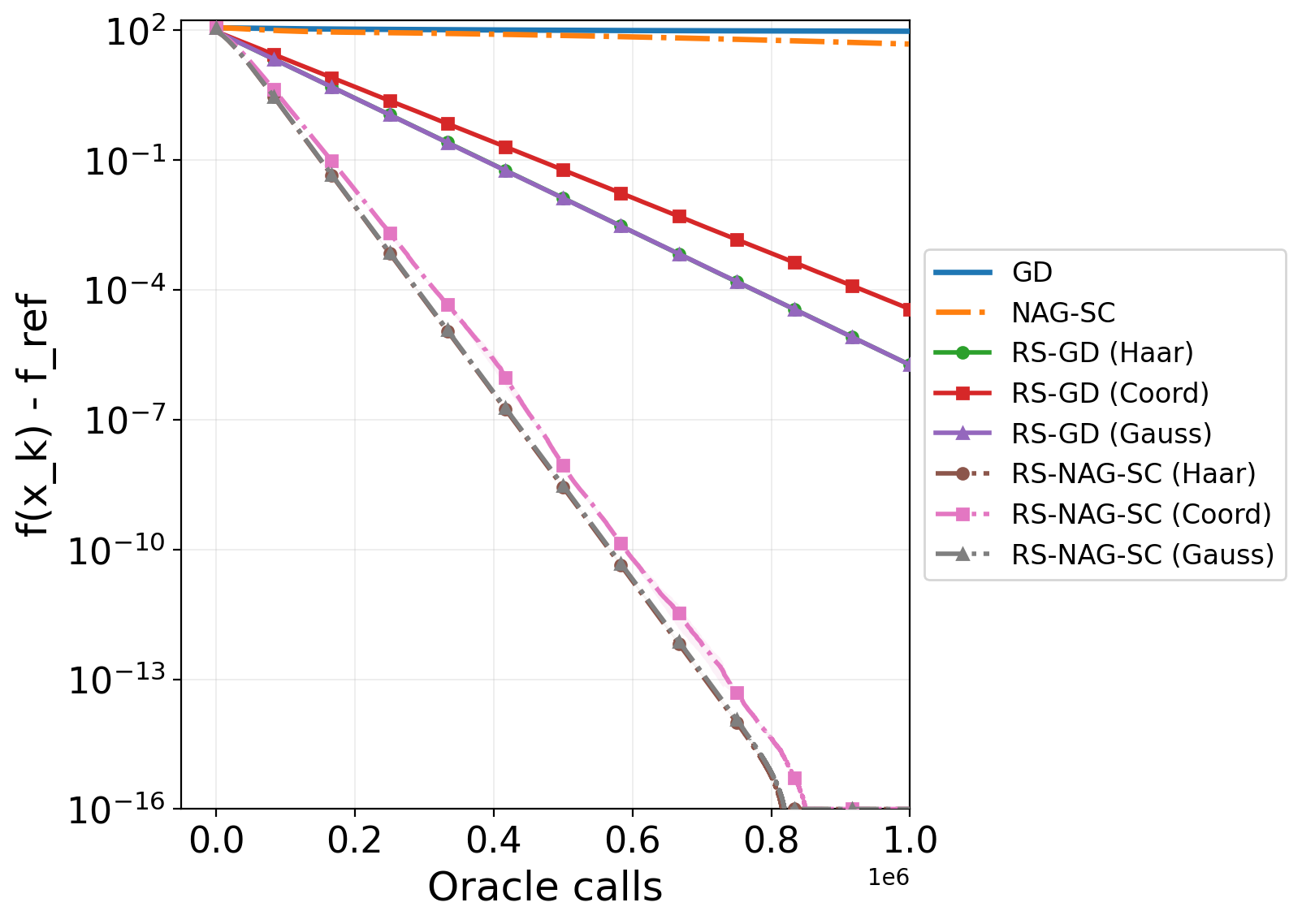}
    \caption{\texttt{leukemia}}
\end{subfigure}
\hfill
\begin{subfigure}[t]{0.32\textwidth}
    \centering
    \includegraphics[width=\linewidth]{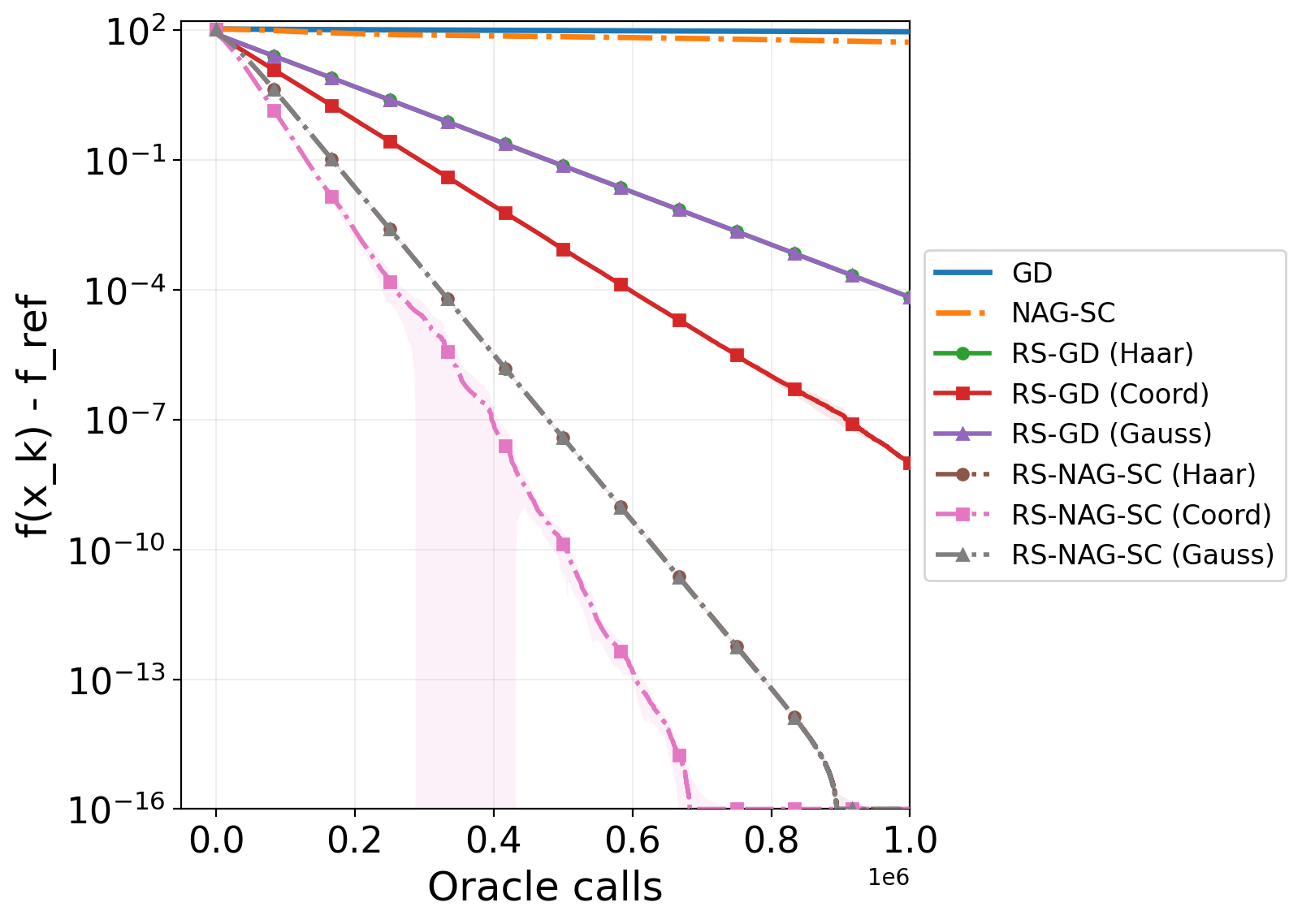}
    \caption{\texttt{duke}}
\end{subfigure}
\caption{
Oracle-axis comparison for \(\ell_2\)-regularized logistic regression on six
real-world datasets. The horizontal axis shows oracle calls, and the vertical
axis shows \(f(x_k)-f_{\mathrm{ref}}\) on a logarithmic scale, where
\(f_{\mathrm{ref}}\) is computed by L-BFGS-B. We compare GD, NAG-SC, RS-GD, and
RS-NAG-SC with Haar, coordinate, and Gaussian sketches. Each curve is the mean
over \(3\) random seeds, and the shaded region shows one standard deviation.
For each dataset, \(\mu=1/n\), \(r=1\), and the oracle budget is \(1{,}000{,}000\).
}
\label{fig:logreg-real-gap-additional}
\end{figure*}

\begin{table}[t]
\centering
\caption{
Dataset-dependent quantities for the datasets.
Here $d$ is the ambient dimension, $n$ is the number of training samples, and
$Q_{\mathrm H}, Q_{\mathrm G}, Q_{\mathrm C}$ denote the $r=1$ constants defined in Proposition~\ref{prop:comp-sketch-basic} for the Haar, Gaussian, and Coordinate sketches, respectively.
}
\label{tab:logreg-q-values-additional}
\small
\setlength{\tabcolsep}{4.5pt}
\renewcommand{\arraystretch}{1.1}
\begin{tabular}{@{}lrrrrrrr@{}}
\toprule
Dataset & $d$ & $Q_{\mathrm H}$ & $Q_{\mathrm G}$ & $Q_{\mathrm C}$ & $r_{\mathrm{eff}}$ & $\delta_{\mathrm{diag}}$ & $n$ \\
\midrule
colon-cancer   & 2000 & 132.1725 & 132.3047 & 116.1338 & 6.7435 & 0.0034 & 62 \\
hiva\_agnostic & 1617 &  92.3197 &  92.4338 & 226.1505 & 3.2773 & 0.0196 & 4229 \\
bioresponse    & 1776 &  88.0942 &  88.1934 & 231.3375 & 2.3746 & 0.0170 & 3751 \\
gisette        & 5000 & 129.9973 & 130.0493 &  86.1400 & 1.3812 & 0.0003 & 6000 \\
leukemia       & 7129 & 215.7402 & 215.8007 & 237.3443 & 4.5306 & 0.0011 & 38 \\
duke           & 7129 & 189.2243 & 189.2773 & 146.8262 & 3.0240 & 0.0004 & 44 \\
\bottomrule
\end{tabular}
\end{table}

\paragraph{Discussion.}
Overall, RS-NAG-SC performs strongly, and the empirical trends align with the
dataset-dependent quantities in \Cref{tab:logreg-q-values-additional}.
For \texttt{hiva\_agnostic}, \texttt{bioresponse}, and \texttt{leukemia}, \(Q_{\mathrm H}\) and \(Q_{\mathrm G}\) are smaller than \(Q_{\mathrm C}\), and the Haar and Gaussian sketches indeed perform better.
By contrast, for \texttt{colon-cancer}, \texttt{gisette}, and \texttt{duke}, \(Q_{\mathrm C}\) is smaller than \(Q_{\mathrm H}\) and \(Q_{\mathrm G}\), which is consistent with the relatively strong performance of the coordinate sketch.
In particular, for \texttt{colon-cancer} and \texttt{leukemia}, where the corresponding \(Q\)-values are relatively close, the empirical performance of the three sketches is also broadly comparable.
Overall, these results suggest that the $Q$ values can serve as a useful practical guide when choosing the sketch distribution before running the method.

\section{Conclusion}

We introduced randomized-subspace Nesterov accelerated gradient methods for
smooth convex and strongly convex optimization under an oracle
model, following the same cost-sensitive viewpoint as 
randomized-subspace methods in general. 
The methods use projected gradients,
recover standard
full-dimensional Nesterov acceleration when \(r=d\), and enjoy oracle-complexity guarantees
under matrix smoothness. For three standard sketch distributions---Haar,
coordinate, and Gaussian sketches---we derived explicit rates, compared the
resulting sketch-dependent constants, and identified the theoretically preferred
subspace dimension under our oracle model.

Future work includes designing sketch distributions beyond the canonical choices considered here. In particular, inspired by non-uniform sampling in accelerated coordinate descent \citep{allenzhu2016}, it would be interesting to develop \(\Lmat\)-aware sketch distributions that lead to faster convergence in oracle calls.

\section*{Acknowledgments}

This project has been partially supported by the Japan Society for the Promotion of Science (JSPS) through JSPS KAKENHI Grant Number JP23H03351 and JST CREST Grant Number JPMJCR24Q2.

\bibliographystyle{plainnat}
\bibliography{references}

\appendix

\section{Related work}
\label{app:related_work}
\subsection{Other random-subspace derivative-free and zeroth-order methods}

Random-subspace ideas have also been studied in related but different
derivative-free and zeroth-order optimization settings. Examples include
model-based derivative-free methods in random subspaces
\cite{cartis2023scalable}, direct-search methods using random subspaces
\cite{roberts2023direct}, zeroth-order methods based on orthogonal random
directions \cite{kozak2023zeroth}, and expected-decrease analyses for
derivative-free algorithms using random subspaces
\cite{hare2025expected}. These works concern derivative-free and zeroth-order
optimization, whereas our focus is on Nesterov-type acceleration under a
directional-derivative oracle.

\subsection{Relation to accelerated compressed-gradient methods}
\label{app:compressed-gradient-comparison}

Several works have studied acceleration in compressed-gradient methods for distributed optimization.
These works are related to our goal of reducing the amount of first-order information used per iteration,
but their models and resulting oracle-complexity implications are different from ours.

First, consider the generic-compressor framework of \citet{li2020acgd}.
If one takes
\[
C(v)=PP^\top v
\]
as a compressor and applies their algorithm, then their compression parameter becomes
\(\omega_{\mathrm{Li}}=\omega-1\) under our notation, since
\(\E[PP^\top]=I_d\) and \(\E[(PP^\top)^2]\preceq \omega I_d\).
Accordingly, their bounds yield oracle complexities
\[
\mathcal{O}\!\left(R_0\,r\omega\sqrt{\frac{L}{\epsilon}}\right)
\qquad\text{and}\qquad
\mathcal{O}\!\left(r\omega\sqrt{\frac{L}{\mu}}\log\frac{\Delta_0}{\epsilon}\right)
\]
in the convex and strongly convex cases, respectively, under our oracle model.
By \Cref{prop:ell-omega-redundant}, we have \(r\omega\ge d\).
Thus, this generic-compressor route is not better than standard full-dimensional Nesterov acceleration
in our oracle model.
In this sense, if \(PP^\top\) is used only through a generic unbiased-compressor framework,
the benefit of randomized-subspace structure does not appear in the resulting oracle bounds.

The work of \citet{safaryan2021smoothness} is also related, as it studies smoothness-aware compression
and accelerated variants under matrix smoothness.
Their setting is distributed: each local loss \(f_i\) is equipped with a local smoothness matrix \(\Lmat_i\),
which is used together with a random diagonal sketch matrix to define a smoothness-aware compression mechanism.
As discussed in their limitations section, this approach requires the server to store \(\Lmat_i^{1/2}\)
for all workers; hence it is not expected
to be practical for large \(d\) unless the matrices \(\Lmat_i\) have special structure, such as low-rank or diagonal structure.
This differs from our setting, where we allow general randomized subspace sketches \(P_k\)
and assume only a global matrix smoothness condition for the objective \(f\).
In particular, our framework does not require local smoothness matrices for individual workers
or the storage of such matrices at the server.
Thus, while both approaches exploit matrix smoothness, the optimization model
and sketching mechanism are different.

\section{Communication perspective on randomized subspace methods}
\label{app:distributed-comparison}

Consider the distributed optimization problem
\[
\min_{x\in\R^d} f(x),
\qquad
f(x)\coloneqq\frac1n\sum_{i=1}^n f_i(x),
\]
where \(n\) is the number of workers and \(f_i\) is the local loss on worker \(i\).
This is a standard objective form in distributed optimization; see, e.g.,
\cite{hong2017proxpda,yang2019survey}.

Each worker sends \(\nabla f_i(y_k)\in\R^d\) to the server, which forms
\[
\nabla f(y_k)=\frac1n\sum_{i=1}^n \nabla f_i(y_k).
\]

To reduce the communication cost per round, we instead use a shared sketch of dimension \(r\).

At iteration \(k\), the server broadcasts the random seed defining \(P_k\in\R^{d\times r}\), and each worker \(i\) computes and sends
\[
s_{i,k}\coloneqq P_k^\top \nabla f_i(y_k)\in\R^r.
\]
The server then averages
\[
\bar s_k\coloneqq\frac1n\sum_{i=1}^n s_{i,k}
\]
and reconstructs
\[
g_k\coloneqq P_k\bar s_k.
\]
By linearity,
\[
g_k
=
P_k\left(\frac1n\sum_{i=1}^n P_k^\top \nabla f_i(y_k)\right)
=
P_kP_k^\top \nabla f(y_k).
\]

Hence the iterates coincide with those of the corresponding
single-machine version of the proposed method applied to \(f\).
Therefore, all convergence guarantees proved for the single-machine method
apply also to the distributed shared-sketch implementation. In particular,
the corresponding convex and strongly convex convergence rates carry over
to the distributed setting.

The matrix smoothness assumption is also natural in this setting.
Indeed, if each local loss \(f_i\) satisfies
\[
f_i(y)\le f_i(x)+\ip{\nabla f_i(x)}{y-x}
+\frac12 (y-x)^\top \Lmat_i (y-x),
\qquad
\Lmat_i\succeq 0,
\]
then averaging over \(i=1,\dots,n\) yields
\[
f(y)\le f(x)+\ip{\nabla f(x)}{y-x}
+\frac12 (y-x)^\top \Lmat (y-x),
\qquad
\Lmat\coloneqq\frac1n\sum_{i=1}^n \Lmat_i.
\]
Such local smoothness matrices \(\Lmat_i\) arise naturally in the matrix-smoothness framework for distributed optimization; see \cite{safaryan2021smoothness}.

\section{Missing Proofs for \Cref{sec:preliminaries} }
\label{app:missing-proofs-prem}

\begin{proof}[Proof of \Cref{prop:ell-omega-redundant}]

Set \(A\coloneqq PP^\top\). Then \(A\) is symmetric positive semidefinite and
\(\operatorname{rank}(A)\le r\). Also, by
\Cref{ass:matrix-smooth-common}, \(\Lmat\succeq0\), \(\Lmat\neq0\), and hence
\(L=\|\Lmat\|>0\).

We prove in order the lower bounds \(\omega\ge d/r\) and \(\ell\ge1\) for
constants satisfying \eqref{eq:second-moment-common} and
\eqref{eq:Lsmooth-sketch-common}, respectively, the product bound
\(\ell\omega\ge1\), and finally that \eqref{eq:Lsmooth-sketch-common} holds
with \(\ell=\omega\) whenever \(\omega\) satisfies
\eqref{eq:second-moment-common}.

\par\smallskip
\noindent\textbf{(i) Lower bound on \(\omega\).}

Let \(\lambda_1,\dots,\lambda_d\ge 0\) be the eigenvalues of \(A\).
Since at least \(d-r\) of them are zero, the Cauchy--Schwarz inequality gives
\[
\bigl(\tr(A)\bigr)^2
=
\Bigl(\sum_{i=1}^d \lambda_i\Bigr)^2
\le
r\sum_{i=1}^d \lambda_i^2
=
r\,\tr(A^2).
\]
Hence
\[
\tr(A^2)\ge \frac{(\tr(A))^2}{r}.
\]
Taking expectations, we obtain
\[
\E[\tr(A^2)]
\ge
\frac{1}{r}\E\bigl[(\tr(A))^2\bigr].
\]
Now apply Jensen's inequality to the convex function \(t\mapsto t^2\):
\[
\E\bigl[(\tr(A))^2\bigr]
\ge
\bigl(\E[\tr(A)]\bigr)^2.
\]
Using the unbiasedness condition \eqref{eq:unbiased-common},
\[
\E[A]=I_d,
\]
we obtain
\[
\E[\tr(A)]
=
\tr(\E[A])
=
\tr(I_d)
=
d.
\]
Therefore,
\[
\E[\tr(A^2)]\ge \frac{d^2}{r}.
\]

On the other hand, taking the trace in \eqref{eq:second-moment-common} gives
\[
\E[\tr(A^2)]
=
\tr(\E[A^2])
\le
\tr(\omega I_d)
=
\omega d.
\]
Combining the lower and upper bounds yields
\[
\omega d \ge \frac{d^2}{r},
\]
and hence
\[
\omega\ge \frac{d}{r}.
\]

\par\smallskip
\noindent\textbf{(ii) Lower bound on \(\ell\).}

Let \(u\in\R^d\) be a unit eigenvector corresponding to the largest
eigenvalue \(L\) of \(\Lmat\), that is,
\[
\|u\|=1,
\qquad
\Lmat u=Lu.
\]
Since \(\Lmat\succeq 0\), we may write its spectral decomposition as
\[
\Lmat
=
Luu^\top + \sum_{i=2}^d \lambda_i v_iv_i^\top,
\qquad
\lambda_i\ge 0.
\]
Therefore,
\[
\Lmat - Luu^\top
=
\sum_{i=2}^d \lambda_i v_iv_i^\top
\succeq 0,
\]
so
\[
\Lmat \succeq Luu^\top.
\]
Therefore, we have
\[
A\Lmat A \succeq L\,Auu^\top A.
\]
Taking the quadratic form with \(u\), we obtain
\[
u^\top A\Lmat A u
\ge
L\,u^\top Auu^\top A u
=
L\,(u^\top Au)^2.
\]
Taking expectation, we get
\[
u^\top \E[A\Lmat A]u
=
\E[u^\top A\Lmat A u]
\ge
L\,\E[(u^\top Au)^2].
\]
Again by Jensen's inequality for the convex function \(t\mapsto t^2\),
\[
\E[(u^\top Au)^2]
\ge
\bigl(\E[u^\top Au]\bigr)^2.
\]
Using once more the unbiasedness condition \(\E[A]=I_d\), we obtain
\[
\bigl(\E[u^\top Au]\bigr)^2
=
(u^\top \E[A]u)^2
=
(u^\top I_du)^2
=
1.
\]
Therefore,
\[
u^\top \E[A\Lmat A]u \ge L.
\]

Now let \(\ell\) be any admissible constant in \eqref{eq:Lsmooth-sketch-common}. Then
\[
\E[A\Lmat A]\preceq \ell L\,I_d.
\]
Taking again the quadratic form with \(u\), we get
\[
u^\top \E[A\Lmat A]u
\le
u^\top (\ell L I_d)u
=
\ell L\|u\|^2
=
\ell L.
\]
Combining the lower and upper bounds yields
\[
L\le \ell L.
\]
Since \(L>0\), we conclude that
\[
\ell\ge 1.
\]

\par\smallskip
\noindent\textbf{(iii) Product bound.}

We have 
\[
\omega\ge \frac{d}{r}\ge 1
\qquad
\text{since }1\le r\le d.
\]
Together with the lower bound \(\ell\ge1\), this implies that for every admissible pair \((\omega,\ell)\),
\[
\ell\omega\ge 1.
\]

\par\smallskip
\noindent\textbf{(iv) Admissibility of \(\ell=\omega\).}

Since \(\Lmat\succeq 0\) and \(L=\|\Lmat\|\), we have
\[
\Lmat\preceq LI_d.
\]
Hence
\[
A\Lmat A \preceq LA^2.
\]
Taking expectations and using \eqref{eq:second-moment-common}, we obtain
\[
\E[A\Lmat A]
\preceq
L\,\E[A^2]
\preceq
\omega L\,I_d.
\]
That is,
\[
\E[PP^\top \Lmat PP^\top]\preceq \omega L\,I_d.
\]
Therefore, the admissible constants \(\ell\) in \eqref{eq:Lsmooth-sketch-common}
contain \(\omega\). Hence one may choose \(\ell\) so that
\[
\ell\le \omega.
\]

This completes the proof.
\end{proof}

\section{Missing Proofs for \Cref{sec:rs-NAG-c} }
\label{app:missing-proofs-convex}
\begin{proof}[Proof of \Cref{prop:convex-reduction-nesterov}]

We show that the resulting \((x_k,y_k)\)-sequence coincides with the classical
two-sequence Nesterov accelerated gradient method
\begin{align}
x_{k+1}^{\mathrm N}
&\coloneqq y_k^{\mathrm N} - \frac{1}{L}\nabla f(y_k^{\mathrm N}),
\label{eq:NAG-conv-x-final}\\
t_{k+1} &\coloneqq \frac{1+\sqrt{1+4t_k^2}}{2},
\label{eq:NAG-conv-t-final}\\
\beta_k &\coloneqq \frac{t_k-1}{t_{k+1}},
\label{eq:NAG-conv-beta-final}\\
y_{k+1}^{\mathrm N}
&\coloneqq x_{k+1}^{\mathrm N} + \beta_k\bigl(x_{k+1}^{\mathrm N}-x_k^{\mathrm N}\bigr),
\label{eq:NAG-conv-y-final}
\end{align}
with initialization \(x_0^{\mathrm N}=x_0\), \(y_0^{\mathrm N}=x_0\),
and \(t_0=1\).

Since \(r=d\) and \(P_kP_k^\top=I_d\) for all \(k\), we have almost surely
\[
(P_kP_k^\top)^2 = I_d,
\qquad
P_kP_k^\top \Lmat P_kP_k^\top = \Lmat.
\]
Taking expectations,
\begin{align*}
\E[PP^\top] &= I_d,\\
\E[(PP^\top)^2] &= I_d,\\
\E[PP^\top \Lmat PP^\top] &= \Lmat.
\end{align*}
Hence \Cref{ass:sketch-common} is satisfied with the
admissible choice
\[
\omega = 1,
\qquad
\ell = 1.
\]

By \Cref{alg:rs-NAG-c}, the iterates satisfy
\begin{align}
&m = \frac{1}{2L\ell}, \label{eq:conv-red-m}\\
&A_{k+1} = A_k+a_{k+1}, \label{eq:conv-red-A}\\
&y_k = \frac{A_k}{A_{k+1}}x_k + \frac{a_{k+1}}{A_{k+1}}z_k, \label{eq:conv-red-y}\\
&x_{k+1} = y_k - \frac{1}{L\ell}P_kP_k^\top \nabla f(y_k), \label{eq:conv-red-x}\\
&z_{k+1} = z_k - a_{k+1}P_kP_k^\top \nabla f(y_k). \label{eq:conv-red-z}
\end{align}
We now fix \(\omega=\ell=1\). By \eqref{eq:conv-red-m},
\[
m=\frac{1}{2L}.
\]
By the scalar update in \Cref{alg:rs-NAG-c}, \(a_{k+1}>0\) satisfies
\begin{equation}
m(A_k+a_{k+1})=\frac{\omega}{2}a_{k+1}^2.
\label{eq:conv-red-ak}
\end{equation}
Hence, since \(\omega=1\),
\[
\frac{1}{2L}(A_k+a_{k+1})=\frac{1}{2}a_{k+1}^2.
\]
By \eqref{eq:conv-red-A}, this is equivalent to
\[
\frac{1}{2L}A_{k+1}=\frac{1}{2}a_{k+1}^2
\quad\Longleftrightarrow\quad
A_{k+1}=La_{k+1}^2.
\]

By \eqref{eq:conv-red-y},
\[
y_k = \frac{A_k}{A_{k+1}}x_k + \frac{a_{k+1}}{A_{k+1}}z_k.
\]
By \eqref{eq:conv-red-x}, together with \(P_kP_k^\top = I_d\) and \(\ell=1\),
\[
x_{k+1}
=
y_k - \frac{1}{L\ell}P_kP_k^\top\nabla f(y_k)
=
y_k - \frac{1}{L}\nabla f(y_k),
\]
and \eqref{eq:conv-red-z} becomes
\[
z_{k+1}
=
z_k - a_{k+1}P_kP_k^\top \nabla f(y_k)
=
z_k - a_{k+1}\nabla f(y_k).
\]
Thus, in the case \(r=d\) with \(P_kP_k^\top=I_d\),  the updates become
\begin{align}
y_k &= \frac{A_k}{A_{k+1}}x_k + \frac{a_{k+1}}{A_{k+1}}z_k,
\label{eq:conv-full-y-proof}\\
x_{k+1} &= y_k - \frac{1}{L}\nabla f(y_k),
\label{eq:conv-full-x-proof}\\
z_{k+1} &= z_k - a_{k+1}\nabla f(y_k),
\label{eq:conv-full-z-proof}\\
A_{k+1} &= A_k + a_{k+1},\quad A_{k+1}=La_{k+1}^2.
\label{eq:conv-full-A-proof}
\end{align}

We next analyze the scalar sequence \(\{A_k\}\).
Define
\[
t_k\coloneqq\sqrt{L A_{k+1}},\qquad k\ge 0.
\]
Since \(A_0=0\), at \(k=0\) we have from \eqref{eq:conv-full-A-proof}
\[
A_1=A_0+a_1=a_1,
\qquad
A_1=La_1^2.
\]
Hence
\[
a_1=La_1^2,
\]
so the nonzero solution is \(a_1=1/L\). Therefore
\[
A_1=A_0+a_1=\frac{1}{L},
\qquad
t_0=\sqrt{L A_1}=1.
\]

For general \(k\ge 0\), combining \(A_{k+2}=La_{k+2}^2\) from
\eqref{eq:conv-full-A-proof} with \(A_{k+2}=A_{k+1}+a_{k+2}\) yields
\[
La_{k+2}^2 = A_{k+1}+a_{k+2}.
\]
Multiplying both sides by \(L\) and using
\[
t_k^2=L A_{k+1},
\qquad
t_{k+1}^2=L A_{k+2}=L^2 a_{k+2}^2,
\]
we obtain
\[
t_{k+1}^2 = t_k^2 + L a_{k+2}.
\]
On the other hand, since
\[
A_{k+2}=La_{k+2}^2,
\]
we have
\[
L a_{k+2}
=
\sqrt{L^2 a_{k+2}^2}
=
\sqrt{L A_{k+2}}
=
t_{k+1},
\]
where in the first equality we use \(a_{k+2}>0\) to select the positive root.
Therefore
\[
t_{k+1}^2 = t_k^2 + t_{k+1},
\]
that is,
\[
t_{k+1}^2 - t_{k+1} - t_k^2 = 0.
\]
For each fixed \(t_k\), the above quadratic equation has the positive root
\begin{equation}
t_{k+1}
=
\frac{1+\sqrt{1+4t_k^2}}{2},
\qquad k\ge 0,
\label{eq:t-recursion}
\end{equation}
since \(t_{k+1}>0\).
This is exactly the recursion \eqref{eq:NAG-conv-t-final}.

We now eliminate $z_k$ and derive the two-sequence recursion for $(x_k,y_k)$.
From \eqref{eq:conv-full-y-proof} we have
\begin{equation}
z_k = \frac{A_{k+1}y_k - A_k x_k}{a_{k+1}}
\label{eq:conv-z-from-xy}
\end{equation}
so for all $k\ge 0$.
Using \eqref{eq:conv-full-x-proof},
\[
\nabla f(y_k) = L\bigl(y_k - x_{k+1}\bigr),
\]
and substituting into the update \eqref{eq:conv-full-z-proof} together with
\eqref{eq:conv-z-from-xy} gives
\begin{align*}
z_{k+1}
&= z_k - a_{k+1}\nabla f(y_k)\\
&= \frac{A_{k+1}y_k - A_k x_k}{a_{k+1}} - a_{k+1}L\bigl(y_k-x_{k+1}\bigr).
\end{align*}
Since $A_{k+1}=La_{k+1}^2$, we have $La_{k+1}=A_{k+1}/a_{k+1}$, hence
\begin{equation}
z_{k+1}
=
\frac{A_{k+1}y_k - A_k x_k}{a_{k+1}}
-
\frac{A_{k+1}}{a_{k+1}}\bigl(y_k-x_{k+1}\bigr)
=
\frac{A_{k+1}x_{k+1} - A_k x_k}{a_{k+1}}.
\label{eq:z-kplus1}
\end{equation}

Next, from \eqref{eq:conv-full-y-proof} with $k$ replaced by $k+1$ we obtain
\[
y_{k+1}
=
\frac{A_{k+1}}{A_{k+2}}x_{k+1}
+
\frac{a_{k+2}}{A_{k+2}}z_{k+1}.
\]
Substituting the expression for $z_{k+1}$ from \eqref{eq:z-kplus1} yields
\begin{align*}
y_{k+1}
&=
\frac{A_{k+1}}{A_{k+2}}x_{k+1}
+
\frac{a_{k+2}}{A_{k+2}}
\cdot
\frac{A_{k+1}x_{k+1}-A_k x_k}{a_{k+1}}\\
&=
\frac{A_{k+1}}{A_{k+2}}\Bigl(1+\frac{a_{k+2}}{a_{k+1}}\Bigr)x_{k+1}
-
\frac{a_{k+2}A_k}{a_{k+1}A_{k+2}}x_k.
\end{align*}
Define
\[
\beta_k \coloneqq \frac{a_{k+2}A_k}{a_{k+1}A_{k+2}}.
\]
Then the coefficient of $x_k$ is $-\beta_k$, and the coefficient of $x_{k+1}$ is
\begin{align*}
\frac{A_{k+1}}{A_{k+2}}\Bigl(1+\frac{a_{k+2}}{a_{k+1}}\Bigr)
& =
\frac{a_{k+1} A_{k+1}+a_{k+2}A_{k+1}}{a_{k+1} A_{k+2}}  \\
& =\frac{a_{k+1} (A_{k+2}-a_{k+2})+a_{k+2}(A_k+a_{k+1})}{a_{k+1} A_{k+2}} = 1+\beta_k,
\end{align*}
where the second equality is derived from 
 $A_{k+2}=A_{k+1}+a_{k+2}$ and $A_{k+1}=A_k+a_{k+1}$.
Thus, we have
\[
y_{k+1}
=
x_{k+1} + \beta_k\bigl(x_{k+1}-x_k\bigr).
\]
It remains to express $\beta_k$ in terms of $\{t_k\}$ and identify it with
\eqref{eq:NAG-conv-beta-final}.
By the definition
\[
t_k\coloneqq\sqrt{L A_{k+1}}
\qquad (k\ge 0),
\]
we have
\[
A_k=\frac{t_{k-1}^2}{L},
\qquad
A_{k+2}=\frac{t_{k+1}^2}{L}
\qquad (k\ge 1).
\]
Moreover, since
\[
A_{k+1}=La_{k+1}^2,
\qquad
a_{k+1}>0,
\]
it follows that
\[
t_k=\sqrt{L A_{k+1}}=\sqrt{L^2a_{k+1}^2}=La_{k+1},
\qquad
t_{k+1}=La_{k+2},
\]
and hence
\[
a_{k+1}=\frac{t_k}{L},
\qquad
a_{k+2}=\frac{t_{k+1}}{L}.
\]
Therefore, for $k\ge 1$,
\begin{align*}
\beta_k
&=
\frac{a_{k+2}A_k}{a_{k+1}A_{k+2}}
=
\frac{\displaystyle \frac{t_{k+1}}{L}\cdot\frac{t_{k-1}^2}{L}}
     {\displaystyle \frac{t_k}{L}\cdot\frac{t_{k+1}^2}{L}}
=
\frac{t_{k-1}^2}{t_k t_{k+1}}.
\end{align*}
Under the positivity condition, the recursion \eqref{eq:NAG-conv-t-final} is equivalent to
\[
t_{j+1}^2 - t_{j+1} - t_j^2 = 0,
\qquad j\ge 0.
\]
In particular, for $k\ge 1$, applying this with $j=k-1$ gives
\[
t_{k-1}^2 = t_k^2 - t_k.
\]
Substituting into the expression for $\beta_k$ yields
\[
\beta_k
=
\frac{t_k^2 - t_k}{t_k t_{k+1}}
=
\frac{t_k-1}{t_{k+1}},
\]
which coincides with \eqref{eq:NAG-conv-beta-final} for $k\ge 1$.
For $k=0$ we have $A_0=0$, so $\beta_0=0$, and also
\(\beta_0=(t_0-1)/t_1=0\), so the formula holds for all $k\ge 0$.

Summarizing, the pair $(x_k,y_k)$ generated by
\eqref{eq:conv-full-y-proof}--\eqref{eq:conv-full-A-proof} satisfies
\[
x_{k+1} = y_k - \frac{1}{L}\nabla f(y_k),
\qquad
y_{k+1} = x_{k+1} + \frac{t_k-1}{t_{k+1}}\bigl(x_{k+1}-x_k\bigr),
\]
with the same initialization $x_0=y_0=x_0^{\mathrm N}=y_0^{\mathrm N}$ and the same sequence
$\{t_k\}$ as in \eqref{eq:NAG-conv-x-final}–\eqref{eq:NAG-conv-y-final}.
Thus $(x_k,y_k)$ obeys exactly the same recursion and initial conditions as
$(x_k^{\mathrm N},y_k^{\mathrm N})$, and hence
\[
x_k = x_k^{\mathrm N},
\qquad
y_k = y_k^{\mathrm N}
\quad\text{for all }k\ge 0.
\]
This completes the proof.
\end{proof}

\begin{proof}[Proof of \Cref{thm:rs-nesterov-convex}]
Define
\[
\Psi_k\coloneqq\E\!\left[
A_k\bigl(f(x_k)-f^\star\bigr)
+\frac12\norm{z_k-x^\star}^2
\right].
\]

Fix \(k\ge 0\), and let
\[
\F_k\coloneqq\sigma(P_0,\dots,P_{k-1}),
\qquad
g_k\coloneqq\nabla f(y_k).
\]

By \Cref{alg:rs-NAG-c}, the iterates satisfy
\begin{align}
&m = \frac{1}{2L\ell}, \label{eq:conv-thm-m}\\
&A_{k+1} = A_k+a_{k+1}, \label{eq:conv-thm-A}\\
&y_k = \frac{A_k}{A_{k+1}}x_k+\frac{a_{k+1}}{A_{k+1}}z_k, \label{eq:conv-thm-y}\\
&x_{k+1} = y_k-\frac{1}{L\ell}P_kP_k^\top \nabla f(y_k), \label{eq:conv-thm-x}\\
&z_{k+1} = z_k-a_{k+1}P_kP_k^\top \nabla f(y_k). \label{eq:conv-thm-z}
\end{align}
Moreover, by the scalar update in \Cref{alg:rs-NAG-c}, \(a_{k+1}>0\) satisfies
\begin{equation}
m(A_k+a_{k+1})=\frac{\omega}{2}a_{k+1}^2.
\label{eq:conv-thm-ak}
\end{equation}
Equivalently, by \eqref{eq:conv-thm-A},
\begin{equation}
mA_{k+1}=\frac{\omega}{2}a_{k+1}^2.
\label{eq:conv-thm-akA}
\end{equation}

By \eqref{eq:conv-thm-x} and \eqref{eq:matrix-smooth-common} applied with
\(x=y_k\) and \(y=x_{k+1}\), we have
\begin{align}
f(x_{k+1})
&\le f(y_k)+\ip{\nabla f(y_k)}{x_{k+1}-y_k}
+\frac12 (x_{k+1}-y_k)^\top \Lmat (x_{k+1}-y_k) \\
&= f(y_k)-\frac{1}{L\ell}\ip{g_k}{P_kP_k^\top g_k}
+\frac{1}{2L^2\ell^2} g_k^\top P_kP_k^\top \Lmat P_kP_k^\top g_k.
\end{align}
Taking conditional expectation and using
\eqref{eq:unbiased-common} and \eqref{eq:Lsmooth-sketch-common},
\begin{align}
\E[f(x_{k+1})\mid \F_k]
&\le f(y_k)-\frac{1}{L\ell}\norm{g_k}^2
+\frac{1}{2L^2\ell^2}g_k^\top \E[P_kP_k^\top\Lmat P_kP_k^\top\mid \F_k] g_k \\
&\le f(y_k)-\frac{1}{L\ell}\norm{g_k}^2
+\frac{1}{2L\ell}\norm{g_k}^2 \\
&= f(y_k)-\frac{1}{2L\ell}\norm{g_k}^2 \\
&= f(y_k)-m\norm{g_k}^2,
\label{eq:x-step-convex}
\end{align}

By \eqref{eq:conv-thm-z},
\[
z_{k+1}-x^\star = (z_k-x^\star)-a_{k+1}P_kP_k^\top g_k.
\]
Hence
\begin{align}
\frac12 \norm{z_{k+1}-x^\star}^2
&=
\frac12 \norm{z_k-x^\star}^2
-a_{k+1}\ip{z_k-x^\star}{P_kP_k^\top g_k}
+\frac{a_{k+1}^2}{2}\norm{P_kP_k^\top g_k}^2.
\end{align}
Taking conditional expectation and using
\eqref{eq:unbiased-common}, \eqref{eq:second-moment-common},
\begin{align}
\E\!\left[\frac12 \norm{z_{k+1}-x^\star}^2 \middle| \F_k\right]
&\le
\frac12 \norm{z_k-x^\star}^2
-a_{k+1}\ip{z_k-x^\star}{g_k}
+\frac{\omega a_{k+1}^2}{2}\norm{g_k}^2.
\label{eq:z-step-convex}
\end{align}

Subtract \(f^\star\) from both sides of \eqref{eq:x-step-convex}, multiply by \(A_{k+1}\),
and then add \eqref{eq:z-step-convex}. This gives
\begin{align}
&\E\!\left[
A_{k+1}\bigl(f(x_{k+1})-f^\star\bigr)+\frac12 \norm{z_{k+1}-x^\star}^2
\middle| \F_k\right]
\nonumber\\
&\le
A_{k+1}\bigl(f(y_k)-f^\star\bigr)
+\frac12 \norm{z_k-x^\star}^2
-a_{k+1}\ip{z_k-x^\star}{g_k}
\nonumber\\
&\qquad
-\left(A_{k+1}m-\frac{\omega a_{k+1}^2}{2}\right)\norm{g_k}^2.
\label{eq:sum-convex-before}
\end{align}
By \eqref{eq:conv-thm-akA}, the last coefficient is zero, hence
\begin{align}
&\E\!\left[
A_{k+1}\bigl(f(x_{k+1})-f^\star\bigr)+\frac12 \norm{z_{k+1}-x^\star}^2
\middle| \F_k\right]
\nonumber\\
&\le
A_{k+1}\bigl(f(y_k)-f^\star\bigr)
+\frac12 \norm{z_k-x^\star}^2
-a_{k+1}\ip{z_k-x^\star}{g_k}.
\label{eq:sum-convex-after}
\end{align}

From \eqref{eq:conv-thm-y},
\[
A_{k+1}y_k = A_k x_k + a_{k+1} z_k.
\]
Therefore
\[
A_k(x_k-y_k)+a_{k+1}(x^\star-y_k)=a_{k+1}(x^\star-z_k).
\]
Taking inner product with \(g_k\),
\begin{equation}
-a_{k+1}\ip{z_k-x^\star}{g_k}
=
A_k\ip{x_k-y_k}{g_k}+a_{k+1}\ip{x^\star-y_k}{g_k}.
\label{eq:key-identity-convex}
\end{equation}

By convexity of \(f\),
\begin{align}
f(x_k) &\ge f(y_k)+\ip{g_k}{x_k-y_k},
\label{eq:convex-1}\\
f^\star=f(x^\star) &\ge f(y_k)+\ip{g_k}{x^\star-y_k}.
\label{eq:convex-2}
\end{align}
Multiply \eqref{eq:convex-1} by \(A_k\), \eqref{eq:convex-2} by \(a_{k+1}\), and add:
\[
A_k\ip{x_k-y_k}{g_k}+a_{k+1}\ip{x^\star-y_k}{g_k}
\le
A_k\bigl(f(x_k)-f(y_k)\bigr)+a_{k+1}\bigl(f^\star-f(y_k)\bigr).
\]
Using \eqref{eq:key-identity-convex}, we obtain
\begin{equation}
-a_{k+1}\ip{z_k-x^\star}{g_k}
\le
A_k\bigl(f(x_k)-f(y_k)\bigr)+a_{k+1}\bigl(f^\star-f(y_k)\bigr).
\label{eq:key-upper-convex}
\end{equation}

Substitute \eqref{eq:key-upper-convex} into \eqref{eq:sum-convex-after}:
\begin{align}
&\E\!\left[
A_{k+1}\bigl(f(x_{k+1})-f^\star\bigr)+\frac12 \norm{z_{k+1}-x^\star}^2
\middle| \F_k\right]
\nonumber\\
&\le
A_{k+1}\bigl(f(y_k)-f^\star\bigr)
+\frac12 \norm{z_k-x^\star}^2
+A_k\bigl(f(x_k)-f(y_k)\bigr)+a_{k+1}\bigl(f^\star-f(y_k)\bigr)
\nonumber\\
&=
A_k\bigl(f(x_k)-f(y_k)\bigr)
+(A_{k+1}-a_{k+1})\bigl(f(y_k)-f^\star\bigr)
+\frac12 \norm{z_k-x^\star}^2
\nonumber\\
&=
A_k\bigl(f(x_k)-f^\star\bigr)+\frac12 \norm{z_k-x^\star}^2,
\label{eq:conditional-psi-contraction}
\end{align}

By nonnegativity, the tower property applies to
\eqref{eq:conditional-psi-contraction}; hence, taking expectation yields
\[
\Psi_{k+1}\le \Psi_k.
\]
Iterating gives \(\Psi_N\le\Psi_0\). Since \(A_0=0\) and \(z_0=x_0\),
\[
\Psi_0=\frac12\norm{x_0-x^\star}^2.
\]
Therefore,
\begin{align}
\label{eq:AN-bound-convex}
A_N\,\E[f(x_N)-f^\star]
\le
\Psi_N
\le
\Psi_0
=
\frac12 \norm{x_0-x^\star}^2.
\end{align}

It remains to lower bound \(A_N\). Define
\[
\rho\coloneqq\frac{2m}{\omega}>0.
\]
From \eqref{eq:conv-thm-akA},
\[
mA_{k+1}=\frac{\omega}{2}a_{k+1}^2,
\]
or equivalently,
\[
A_{k+1}=\frac{a_{k+1}^2}{\rho}.
\]
Since \(A_{k+1}=A_k+a_{k+1}\), we have
\[
A_{k+1}-A_k=\sqrt{\rho A_{k+1}}.
\]
Let
\[
B_k\coloneqq\sqrt{\frac{A_k}{\rho}}.
\]
Then
\[
B_{k+1}^2-B_k^2=B_{k+1}.
\]
Hence
\[
(B_{k+1}-B_k)(B_{k+1}+B_k)=B_{k+1}.
\]
Since \(A_{k+1}=A_k+a_{k+1}\) with \(a_{k+1}>0\), we have \(A_{k+1}>A_k\ge 0\), so
\(B_{k+1}> B_k\ge 0\) and in particular \(B_{k+1}+B_k>0\). 
Thus,
\[
B_{k+1}-B_k=\frac{B_{k+1}}{B_{k+1}+B_k}\ge \frac12.
\]
Since \(B_0=0\), summing gives
\[
B_N\ge \frac{N}{2}.
\]
Therefore,
\begin{equation}
A_N
=
\rho B_N^2
\ge
\frac{\rho N^2}{4}
=
\frac{m}{2\omega}N^2.
\label{eq:AN-lower}
\end{equation}
Combining \eqref{eq:AN-bound-convex} and \eqref{eq:AN-lower}, for all \(N\ge1\),
\[
\E[f(x_N)-f^\star]
\le
\frac{1}{2A_N}\norm{x_0-x^\star}^2
\le
\frac{\omega}{m}\frac{\norm{x_0-x^\star}^2}{N^2}.
\]
Since \(m=1/(2L\ell)\), this gives
\begin{equation}
\E[f(x_N)-f^\star]
\le
2L\omega\ell\,
\frac{\norm{x_0-x^\star}^2}{N^2},
\end{equation}
which is \eqref{eq:convex-final-rate}.

Finally, let \(R_0\coloneqq\norm{x_0-x^\star}\). Solving
\[
2L\omega\ell\,\frac{R_0^2}{N^2}\le \epsilon
\]
gives
\[
N \;\ge\; R_0\sqrt{\frac{2L\omega\ell}{\epsilon}}.
\]
This completes the proof.

\end{proof}

\section{Missing Proofs for \Cref{sec:rs-NAG-sc} }
\label{app:missing-proofs-strong}

\begin{proof}[Proof of \Cref{prop:strong-reduction-nesterov}]
We show that the resulting \((x_k,y_k)\)-sequence coincides with the classical
two-sequence accelerated gradient method
\begin{align}
x_{k+1}^{\mathrm{N}}
&\coloneqq y_k^{\mathrm{N}} - \frac{1}{L}\nabla f(y_k^{\mathrm{N}}),
\label{eq:NAG-sc-x-final}\\
y_{k+1}^{\mathrm{N}}
&\coloneqq x_{k+1}^{\mathrm{N}} + \beta\bigl(x_{k+1}^{\mathrm{N}}-x_k^{\mathrm{N}}\bigr),
\label{eq:NAG-sc-y-final}
\end{align}
where
\begin{equation}
\beta \coloneqq \frac{\sqrt{L}-\sqrt{\mu}}{\sqrt{L}+\sqrt{\mu}}
\label{eq:beta-classical}
\end{equation}
with initialization \(x_0^{\mathrm{N}}=x_0\) and \(y_0^{\mathrm{N}}=x_0\).

As in the convex case, the assumptions \(r=d\) and \(P_kP_k^\top=I_d\)
imply
\[
(P_kP_k^\top)^2 = I_d,
\qquad
P_kP_k^\top \Lmat P_kP_k^\top = \Lmat
\]
almost surely, so \Cref{ass:sketch-common} is satisfied
with
\[
\omega = 1,
\qquad
\ell = 1.
\]

By \Cref{alg:rs-NAG-sc}, the iterates satisfy
\begin{align}
&\theta = \sqrt{\frac{\mu}{L\omega\ell}}, \label{eq:sc-red-theta}\\
&y_k = \frac{1}{1+\theta}x_k + \frac{\theta}{1+\theta}z_k, \label{eq:sc-red-y}\\
&x_{k+1} = y_k - \frac{1}{L\ell}P_kP_k^\top \nabla f(y_k), \label{eq:sc-red-x}\\
&z_{k+1} = (1-\theta)z_k + \theta y_k - \frac{\theta}{\mu}P_kP_k^\top \nabla f(y_k). \label{eq:sc-red-z}
\end{align}

We now fix \(\omega=\ell=1\). By \eqref{eq:sc-red-theta},
\[
\theta=\sqrt{\frac{\mu}{L}}.
\]
Using \eqref{eq:sc-red-y}--\eqref{eq:sc-red-z} together with \(P_kP_k^\top=I_d\), we obtain
\begin{align}
y_k &= \frac{1}{1+\theta}x_k + \frac{\theta}{1+\theta}z_k,
\label{eq:sc-full-y-proof}\\
x_{k+1} &= y_k - \frac{1}{L}\nabla f(y_k),
\label{eq:sc-full-x-proof}\\
z_{k+1} &= (1-\theta)z_k + \theta y_k - \frac{\theta}{\mu}\nabla f(y_k).
\label{eq:sc-full-z-proof}
\end{align}

We first rewrite the \(z\)-update.
From \eqref{eq:sc-full-x-proof},
\[
x_{k+1} = y_k - \frac{1}{L}\nabla f(y_k)
\quad\Longrightarrow\quad
\nabla f(y_k) = L\bigl(y_k-x_{k+1}\bigr).
\]
Since \(\mu=\theta^2 L\), it follows that
\[
\frac{\theta}{\mu}\nabla f(y_k)
=
\frac{\theta}{\theta^2 L}L\bigl(y_k-x_{k+1}\bigr)
=
\frac{1}{\theta}\bigl(y_k-x_{k+1}\bigr).
\]
Substituting this into the update for \(z_{k+1}\) yields
\begin{align}
z_{k+1}
&=
(1-\theta)z_k + \theta y_k - \frac{1}{\theta}\bigl(y_k-x_{k+1}\bigr)
\nonumber\\
&=
(1-\theta)z_k + \Bigl(\theta-\frac{1}{\theta}\Bigr)y_k + \frac{1}{\theta}x_{k+1}.
\label{eq:z-update-expanded-proof}
\end{align}
We now derive a convenient representation of \(z_k\) in terms of
\(\{x_j\}_{j\le k}\).
From \eqref{eq:sc-full-y-proof},
\begin{equation}
(1+\theta) y_k = x_k + \theta z_k.
\label{eq:y-relation}
\end{equation}
Thus
\begin{align}
\theta(1-\theta) z_k
&= (1-\theta)\bigl((1+\theta) y_k - x_k\bigr) \nonumber\\
&= (1-\theta^2) y_k - (1-\theta) x_k.
\label{eq:theta1-theta-z}
\end{align}
Equivalently,
\begin{equation}
\theta(1-\theta) z_k + (\theta^2 - 1) y_k = -(1-\theta) x_k.
\label{eq:linear-comb-zy}
\end{equation}
Dividing \eqref{eq:linear-comb-zy} by \(\theta>0\) gives
\begin{equation}
(1-\theta) z_k + \Bigl(\theta - \frac{1}{\theta}\Bigr) y_k
= -\frac{1-\theta}{\theta}\,x_k.
\label{eq:comb-zy}
\end{equation}

On the other hand, from \eqref{eq:z-update-expanded-proof} we have
\begin{equation}
z_{k+1}
=
(1-\theta) z_k + \Bigl(\theta - \frac{1}{\theta}\Bigr) y_k + \frac{1}{\theta} x_{k+1}.
\label{eq:z-update-expanded-again}
\end{equation}
Substituting \eqref{eq:comb-zy} into \eqref{eq:z-update-expanded-again} yields
\begin{align}
z_{k+1}
&= -\frac{1-\theta}{\theta} x_k + \frac{1}{\theta} x_{k+1} \nonumber\\
&= x_{k+1} + \frac{1-\theta}{\theta}\bigl(x_{k+1}-x_k\bigr).
\label{eq:z-kplus1-x}
\end{align}

Using this representation of \(z_{k+1}\) in \eqref{eq:sc-full-y-proof}
at step \(k+1\), we obtain
\[
y_{k+1}
=
\frac{1}{1+\theta}x_{k+1} + \frac{\theta}{1+\theta}z_{k+1}
=
\frac{1}{1+\theta}x_{k+1}
+
\frac{\theta}{1+\theta}\left(
x_{k+1} + \frac{1-\theta}{\theta}(x_{k+1}-x_k)
\right),
\]
and therefore
\begin{align*}
y_{k+1}
&=
\left(\frac{1}{1+\theta}+\frac{\theta}{1+\theta}\right)x_{k+1}
+
\frac{1-\theta}{1+\theta}(x_{k+1}-x_k)\\
&=
x_{k+1} + \frac{1-\theta}{1+\theta}(x_{k+1}-x_k).
\end{align*}
Using $\theta=\sqrt{\mu/L}$, we can rewrite
\[
\beta
=
\frac{\sqrt{L}-\sqrt{\mu}}{\sqrt{L}+\sqrt{\mu}}
=
\frac{1-\sqrt{\mu/L}}{1+\sqrt{\mu/L}}
=
\frac{1-\theta}{1+\theta}.
\]
Hence
\[
y_{k+1}
=
x_{k+1} + \beta(x_{k+1}-x_k),
\]
and the $x$-update \eqref{eq:sc-full-x-proof} is exactly
\eqref{eq:NAG-sc-x-final} with $y_k^{\mathrm N}=y_k$.

Thus the pair \((x_k,y_k)\) generated by
\eqref{eq:sc-full-y-proof}--\eqref{eq:sc-full-z-proof} satisfies the
same recursion \eqref{eq:NAG-sc-x-final}--\eqref{eq:NAG-sc-y-final}
and the same initialization \(x_0=y_0=x_0^{\mathrm N}=y_0^{\mathrm N}\)
as the classical accelerated gradient method.
Combining the base case $k=0$ with the inductive step, we have shown that
\[
x_k^{\mathrm N}=x_k,
\qquad
y_k^{\mathrm N}=y_k
\quad\text{for all }k\ge 0.
\]
This completes the proof of the proposition.
\end{proof}

\begin{proof}[Proof of \Cref{thm:rs-nesterov-strong}]
First, we show that the parameter \(\theta\) in \Cref{alg:rs-NAG-sc}
satisfies \(\theta\in(0,1]\). Since \(\Lmat\succeq0\) and \(L=\|\Lmat\|\),
we have
\[
\Lmat\preceq LI_d.
\]
Hence \Cref{ass:matrix-smooth-common} implies
\[
f(y)\le f(x)+\ip{\nabla f(x)}{y-x}+\frac{L}{2}\norm{y-x}^2.
\]
Since \(f\) is also \(\mu\)-strongly convex by \Cref{ass:func}, it follows that
\[
\mu\le L.
\]
Moreover, by \Cref{prop:ell-omega-redundant},
\[
\omega\ell\ge 1.
\]
Therefore, using the definition of \(\theta\) in \Cref{alg:rs-NAG-sc},
\[
0<\theta^2=\frac{\mu}{L\omega\ell}\le \frac{\mu}{L}\le 1,
\]
and hence \(\theta\in(0,1]\).

Define
\[
\Phi_k\coloneqq\E\!\left[
f(x_k)-f^\star+\frac{\mu}{2}\norm{z_k-x^\star}^2
\right].
\]

Fix \(k\ge 0\), and let
\[
\F_k\coloneqq\sigma(P_0,\dots,P_{k-1}),
\qquad
g_k\coloneqq\nabla f(y_k).
\]

By \Cref{alg:rs-NAG-sc}, the iterates satisfy
\begin{align}
&\theta = \sqrt{\frac{\mu}{L\omega\ell}}, \label{eq:thm-sc-theta}\\
&y_k = \frac{1}{1+\theta}x_k + \frac{\theta}{1+\theta}z_k, \label{eq:thm-sc-y}\\
&x_{k+1} = y_k - \frac{1}{L\ell}P_kP_k^\top \nabla f(y_k), \label{eq:thm-sc-x}\\
&z_{k+1} = (1-\theta)z_k + \theta y_k - \frac{\theta}{\mu}P_kP_k^\top \nabla f(y_k). \label{eq:thm-sc-z}
\end{align}
As shown above, \(\theta\in(0,1]\). 
In particular, \(\theta>0\), so division by \(\theta\) below is valid, and
\(1-\theta\ge 0\).

By \eqref{eq:thm-sc-x} and \eqref{eq:matrix-smooth-common} applied with
\(x=y_k\) and \(y=x_{k+1}\), we get
\begin{align}
f(x_{k+1})
&\le f(y_k)+\ip{\nabla f(y_k)}{x_{k+1}-y_k}
+\frac12 (x_{k+1}-y_k)^\top \Lmat (x_{k+1}-y_k) \\
&= f(y_k) - \frac{1}{L\ell}\ip{g_k}{P_kP_k^\top g_k}
    + \frac{1}{2L^2\ell^2} g_k^\top P_kP_k^\top \Lmat P_kP_k^\top g_k.
\end{align}
Taking conditional expectation and using \eqref{eq:unbiased-common} and \eqref{eq:Lsmooth-sketch-common},
\begin{align}
\E[f(x_{k+1})\mid \F_k]
&\le f(y_k) - \frac{1}{L\ell}\norm{g_k}^2
    + \frac{1}{2L^2\ell^2}\, g_k^\top \E[P_kP_k^\top\Lmat P_kP_k^\top\mid \F_k] g_k \\
&\le f(y_k) - \frac{1}{2L\ell}\norm{g_k}^2.
\end{align}
Define
\begin{equation}
m \coloneqq \frac{1}{2L\ell}.
\label{eq:mt}
\end{equation}
Then
\begin{equation}
\E[f(x_{k+1})\mid \F_k] \le f(y_k) - m \norm{g_k}^2.
\label{eq:x-step-bound}
\end{equation}

Define
\begin{align}\label{w_def}
w_k \coloneqq (1-\theta)(z_k-x^\star) + \theta (y_k-x^\star).
\end{align}
From \eqref{eq:thm-sc-z},
\begin{align}
z_{k+1}-x^\star
&= (1-\theta)z_k+\theta y_k-\frac{\theta}{\mu}P_kP_k^\top g_k - x^\star \\
&= (1-\theta)(z_k-x^\star)+\theta(y_k-x^\star)-\frac{\theta}{\mu}P_kP_k^\top g_k \\
&= w_k - \frac{\theta}{\mu}P_kP_k^\top g_k.
\end{align}
Hence
\begin{align}
\frac{\mu}{2}\norm{z_{k+1}-x^\star}^2
&= \frac{\mu}{2}\norm{w_k}^2
   - \theta \ip{w_k}{P_kP_k^\top g_k}
   + \frac{\theta^2}{2\mu}\norm{P_kP_k^\top g_k}^2.
\end{align}
Taking conditional expectation and using \eqref{eq:unbiased-common}, \eqref{eq:second-moment-common},
\begin{align}
\E\!\left[\frac{\mu}{2}\norm{z_{k+1}-x^\star}^2\middle|\F_k\right]
&\le \frac{\mu}{2}\norm{w_k}^2
    - \theta \ip{w_k}{g_k}
    + \frac{\omega\theta^2}{2\mu}\norm{g_k}^2.
\label{eq:z-step-bound}
\end{align}

Summing \eqref{eq:x-step-bound} and \eqref{eq:z-step-bound}, and subtracting $f^\star$,
\begin{align}
&\E\!\left[f(x_{k+1})-f^\star+\frac{\mu}{2}\norm{z_{k+1}-x^\star}^2 \middle|\F_k\right] \nonumber\\
&\qquad \le
f(y_k)-f^\star + \frac{\mu}{2}\norm{w_k}^2 - \theta\ip{w_k}{g_k}
-\left(m-\frac{\omega\theta^2}{2\mu}\right)\norm{g_k}^2.
\label{eq:sum-before-cancel}
\end{align}
By \eqref{eq:thm-sc-theta} and \eqref{eq:mt},
\[
m=\frac{\omega\theta^2}{2\mu},
\]
so the $\norm{g_k}^2$ term vanishes. Thus
\begin{align}
&\E\!\left[f(x_{k+1})-f^\star+\frac{\mu}{2}\norm{z_{k+1}-x^\star}^2 \middle|\F_k\right]
\label{eq:after-cancel}\\
&\qquad \le
f(y_k)-f^\star + \frac{\mu}{2}\norm{w_k}^2 - \theta\ip{w_k}{g_k}. \nonumber
\end{align}

From \eqref{eq:thm-sc-y},
\[
(1+\theta)y_k = x_k+\theta z_k
\quad\Longrightarrow\quad
\theta(z_k-y_k)=y_k-x_k.
\]
Let
\[
a_k\coloneqq z_k-x^\star,\qquad b_k\coloneqq y_k-x^\star.
\]
Then, recalling the definition \eqref{w_def}, we have
\[
w_k=(1-\theta)a_k+\theta b_k.
\]
Also,
\begin{align}
\frac{\mu}{2}\norm{w_k}^2
&=
\frac{\mu}{2}\norm{(1-\theta)a_k+\theta b_k}^2 \\
&=
(1-\theta)\frac{\mu}{2}\norm{a_k}^2
+\theta\frac{\mu}{2}\norm{b_k}^2
-\theta(1-\theta)\frac{\mu}{2}\norm{a_k-b_k}^2.
\label{eq:w-square}
\end{align}
Also,
\begin{align}
-\theta\ip{w_k}{g_k}
&= -\theta\ip{(1-\theta)a_k+\theta b_k}{g_k} \\
&= -\theta(1-\theta)\ip{a_k}{g_k}-\theta^2\ip{b_k}{g_k} \\
&= -\theta\ip{b_k}{g_k} - \theta(1-\theta)\ip{a_k-b_k}{g_k} \\
&= -\theta\ip{b_k}{g_k} + (1-\theta)\ip{x_k-y_k}{g_k},
\label{eq:w-inner}
\end{align}
where in the last step we used $a_k-b_k=z_k-y_k=(y_k-x_k)/\theta$.

Since $f$ is $\mu$-strongly convex, for any $u,v$,
\[
f(u)\ge f(v)+\ip{\nabla f(v)}{u-v}+\frac{\mu}{2}\norm{u-v}^2.
\]
Applying this with $(u,v)=(x^\star,y_k)$ gives
\begin{equation}
\ip{g_k}{b_k}=\ip{g_k}{y_k-x^\star}
\ge f(y_k)-f^\star + \frac{\mu}{2}\norm{y_k-x^\star}^2
= f(y_k)-f^\star + \frac{\mu}{2}\norm{b_k}^2.
\label{eq:sc-1}
\end{equation}
Applying it with $(u,v)=(x_k,y_k)$ gives
\begin{equation}
\ip{g_k}{x_k-y_k}
\le f(x_k)-f(y_k)-\frac{\mu}{2}\norm{x_k-y_k}^2.
\label{eq:sc-2}
\end{equation}

Substitute \eqref{eq:w-square}, \eqref{eq:w-inner}, \eqref{eq:sc-1}, \eqref{eq:sc-2}
into \eqref{eq:after-cancel}:
\begin{align}
&\E\!\left[f(x_{k+1})-f^\star+\frac{\mu}{2}\norm{z_{k+1}-x^\star}^2 \middle|\F_k\right] \nonumber\\
&\le f(y_k)-f^\star
+(1-\theta)\frac{\mu}{2}\norm{a_k}^2
+\theta\frac{\mu}{2}\norm{b_k}^2
-\theta(1-\theta)\frac{\mu}{2}\norm{a_k-b_k}^2 \nonumber\\
&\quad
-\theta\!\left(f(y_k)-f^\star+\frac{\mu}{2}\norm{b_k}^2\right)
+(1-\theta)\!\left(f(x_k)-f(y_k)-\frac{\mu}{2}\norm{x_k-y_k}^2\right)\\
&\le f(y_k)-f^\star
+(1-\theta)\frac{\mu}{2}\norm{a_k}^2
+\theta\frac{\mu}{2}\norm{b_k}^2 \nonumber\\
&\quad
-\theta\!\left(f(y_k)-f^\star+\frac{\mu}{2}\norm{b_k}^2\right)
+(1-\theta)\!\left(f(x_k)-f(y_k)\right).
\end{align}
The coefficients of \(f(y_k)\) and \(\norm{b_k}^2\) cancel.
Using also \(a_k=z_k-x^\star\), we obtain
\begin{align}
&\E\!\left[f(x_{k+1})-f^\star+\frac{\mu}{2}\norm{z_{k+1}-x^\star}^2 \middle|\F_k\right] \nonumber\\
&\le
(1-\theta)\bigl(f(x_k)-f^\star\bigr)
+(1-\theta)\frac{\mu}{2}\norm{z_k-x^\star}^2 \nonumber\\
&= (1-\theta)\left(f(x_k)-f^\star+\frac{\mu}{2}\norm{z_k-x^\star}^2\right).
\label{eq:conditional-contraction}
\end{align}

By nonnegativity, the tower property applies to
\eqref{eq:conditional-contraction}; hence, taking expectation gives
\begin{equation}
\Phi_{k+1}\le (1-\theta)\Phi_k.
\label{eq:phi-contraction}
\end{equation}
Iterating yields
\[
\Phi_N\le (1-\theta)^N\Phi_0.
\]
Moreover,
\[
\E[f(x_N)-f^\star]\le \Phi_N.
\]
By \(\mu\)-strong convexity,
\[
f(x_0)-f^\star
\ge
\frac{\mu}{2}\norm{x_0-x^\star}^2.
\]
Since \(z_0=x_0\), this implies
\[
\Phi_0
=
f(x_0)-f^\star+\frac{\mu}{2}\norm{z_0-x^\star}^2
\le
2\bigl(f(x_0)-f^\star\bigr)
=
2\Delta_0.
\]
Therefore,
\[
\E[f(x_N)-f^\star]
\le \Phi_N
\le (1-\theta)^N\Phi_0
\le 2(1-\theta)^N\Delta_0
\]
which is \eqref{eq:main-rate}.

It remains to derive the complexity bound. Since \(1-\theta\le e^{-\theta}\),
\[
\E[f(x_N)-f^\star]
\le
2e^{-N\theta}\Delta_0.
\]
Thus, to guarantee \(\E[f(x_N)-f^\star]\le\epsilon\), it suffices that
\[
N
\ge
\frac{1}{\theta}\log\!\left(\frac{2\Delta_0}{\epsilon}\right).
\]
Using the definition of \(\theta\) in \Cref{alg:rs-NAG-sc}, this is
\[
N
\ge
\sqrt{\frac{L\omega\ell}{\mu}}\,
\log\!\left(\frac{2\Delta_0}{\epsilon}\right).
\]

This completes the proof.
\end{proof}

\section{Missing Proofs for \Cref{sec:sketch-examples} }
\label{app:missing-proofs-sketch}

\begin{proof}[Proof of \Cref{prop:verify-sketch-common}]
We verify the three parts of \Cref{ass:sketch-common} for each sketch:
(i) unbiasedness \(\E[PP^\top]=I_d\),
(ii) the second-moment bound \(\E[(PP^\top)^2]\preceq \omega I_d\),
and (iii) the matrix-smoothness interaction
\(\E[PP^\top \Lmat PP^\top]\preceq \ell L I_d\).

\paragraph{Haar sketch.}
Let \(U\) be Haar-distributed on \(O(d)\), let
\(R\in\mathbb{R}^{d\times r}\) be the matrix formed by the first \(r\) columns
of \(U\), and set \(P=\sqrt{d/r}\,R\).
It is standard that \(\E[RR^\top]=\frac{r}{d}I_d\), so \(\E[PP^\top]=I_d\), verifying unbiasedness.

For the second moment, 
\[
(PP^\top)^2
=
PP^\top PP^\top
=
\frac{d}{r}\,PP^\top.
\]
Taking expectations and using \(\E[PP^\top]=I_d\) gives
\(\E[(PP^\top)^2]=\frac{d}{r}I_d\), so we may take \(\omega_{\mathrm{Haar}}=\frac{d}{r}\).

For the interaction with \(\Lmat\), write
\[
L\coloneqq\|\Lmat\|,
\qquad
r_{\mathrm{eff}}\coloneqq\frac{\tr(\Lmat)}{\|\Lmat\|}.
\]
By results in \cite{flynn2024},
\[
\E[PP^\top \Lmat PP^\top]
=
\frac{d}{r}(1-\beta)\Lmat+\frac{\beta}{r}\tr(\Lmat)\,I_d,
\]
where \(\beta=\frac{d(d-r)}{(d+2)(d-1)}\).
Using \(\Lmat\succeq 0\) and \(\|\Lmat\|=L\),
\[
\E[PP^\top \Lmat PP^\top]
\preceq
\left[\frac{d}{r}(1-\beta)+\frac{\beta}{r}\frac{\tr(\Lmat)}{L}\right]L\,I_d
=
\frac{d}{r}\left(1-\beta+\beta\frac{r_{\mathrm{eff}}}{d}\right)L\,I_d.
\]
Thus \eqref{eq:Lsmooth-sketch-common}  in \Cref{ass:sketch-common} holds with
\[
\ell_{\mathrm{Haar}}
=
\frac{d}{r}\left(1-\beta+\beta\frac{r_{\mathrm{eff}}}{d}\right).
\]

\paragraph{Coordinate sketch.}
Let \(S\in\R^{d\times r}\) consist of \(r\) distinct columns sampled uniformly from the identity
matrix \(I_d\), and let \(P\coloneqq\sqrt{\frac{d}{r}}S\).
Then \(SS^\top\) is a coordinate projection, and it is straightforward to check that
\(\E[SS^\top]=\frac{r}{d}I_d\), so \(\E[PP^\top]=I_d\).

For the second moment, 
\[
(PP^\top)^2
=
PP^\top PP^\top
=
\frac{d}{r}\,PP^\top.
\]
Taking expectations, we obtain
\(\E[(PP^\top)^2]=\frac{d}{r}I_d\), so we may take \(\omega_{\mathrm{Coord}}=\frac{d}{r}\).

For the interaction with \(\Lmat\), define
\[
\delta_{\mathrm{diag}}\coloneqq\frac{\|\diag(\Lmat)\|}{\|\Lmat\|}.
\]
By a result in \cite{flynn2024},
\[
\E[PP^\top \Lmat PP^\top]
=
\frac{d}{r}\left(\frac{r-1}{d-1}\Lmat+\frac{d-r}{d-1}\diag(\Lmat)\right).
\]
Using \(\|\Lmat\|=L\) and \(\|\diag(\Lmat)\|=\delta_{\mathrm{diag}}L\),
\[
\E[PP^\top \Lmat PP^\top]
\preceq
\frac{d}{r}\left(\frac{r-1}{d-1}L+\frac{d-r}{d-1}\delta_{\mathrm{diag}}L\right)I_d
=
\frac{d}{r}\left(\frac{r-1}{d-1}+\frac{d-r}{d-1}\delta_{\mathrm{diag}}\right)L\,I_d.
\]
Thus we may take
\[
\ell_{\mathrm{Coord}}
=
\frac{d}{r}\left(\frac{r-1}{d-1}+\frac{d-r}{d-1}\delta_{\mathrm{diag}}\right).
\]

\paragraph{Gaussian sketch.}
Finally, assume \(P\in\R^{d\times r}\) has i.i.d.\ entries
\(P_{ij}\sim \mathcal{N}(0,1/r)\).
Then \(\E[PP^\top]=I_d\) by construction.
Let \(G\coloneqq\sqrt{r}\,P\), so that \(G_{ij}\sim\mathcal{N}(0,1)\) i.i.d.\ and
\(PP^\top=\frac{1}{r}GG^\top\).

For the second moment, a result from \cite{flynn2024} implies
\[
\E[GG^\top I_d GG^\top]=r(r+1)I_d+r\,\tr(I_d)\,I_d
= r(r+1)I_d + rd\,I_d.
\]
Hence
\[
\E[(PP^\top)^2]
=
\frac{1}{r^2}\E[(GG^\top)^2]
=
\frac{1}{r^2}\bigl(r(r+1)+rd\bigr)I_d
=
\frac{d+r+1}{r}\,I_d,
\]
so 
\(\omega_{\mathrm{Gauss}}=\frac{d+r+1}{r}\).

For the interaction with \(\Lmat\), a result in \cite{flynn2024} implies
\[
\E[GG^\top \Lmat GG^\top]=r(r+1)\Lmat+r\,\tr(\Lmat)\,I_d.
\]
Recalling that \(P=G/\sqrt{r}\) and hence \(PP^\top = \frac{1}{r} GG^\top\), we obtain
\[
\E[PP^\top \Lmat PP^\top]
=
\frac{1}{r^2}\,\E[GG^\top \Lmat GG^\top]
=
\frac{r+1}{r}\,\Lmat+\frac{\tr(\Lmat)}{r}\,I_d.
\]
Writing
\[
L\coloneqq\|\Lmat\|,
\qquad
r_{\mathrm{eff}}\coloneqq\frac{\tr(\Lmat)}{\|\Lmat\|},
\]
and using \(\Lmat\succeq 0\) and \(\|\Lmat\|=L\), we obtain
\begin{align*}
\E[PP^\top \Lmat PP^\top]
&\preceq
\frac{r+1}{r}\,L I_d+\frac{r_{\mathrm{eff}}L}{r}\,I_d \\
&=
\frac{r+1+r_{\mathrm{eff}}}{r}\,L\,I_d.
\end{align*}

Thus,
\[
\ell_{\mathrm{Gauss}}
=
\frac{r+1+r_{\mathrm{eff}}}{r}.
\]

It remains to prove the stated basic bounds on \(r_{\mathrm{eff}}\) and
\(\delta_{\mathrm{diag}}\).

Let \(\lambda_1,\dots,\lambda_d\) be the eigenvalues of \(\Lmat\).
Since \(\Lmat\succeq 0\) and \(L=\|\Lmat\|\), we have
\[
0\le \lambda_i\le L
\qquad (i=1,\dots,d),
\]
and since \(\Lmat\neq 0\), at least one eigenvalue equals \(L\).
Therefore
\[
L \le \tr(\Lmat)=\sum_{i=1}^d \lambda_i \le dL,
\]
which gives
\[
1\le r_{\mathrm{eff}}=\frac{\tr(\Lmat)}{L}\le d.
\]

Next, since \(\delta_{\mathrm{diag}}=\|\diag(\Lmat)\|/L\), we have
\[
\delta_{\mathrm{diag}}
=
\frac{\max_{1\le i\le d} \Lmat_{ii}}{L}.
\]
Because \(\Lmat\succeq 0\), for each \(i\),
\[
\Lmat_{ii}=e_i^\top \Lmat e_i \le \|\Lmat\|=L,
\]
so \(\delta_{\mathrm{diag}}\le 1\).

Also,
\[
\max_{1\le i\le d}\Lmat_{ii}
\ge
\frac{1}{d}\sum_{i=1}^d \Lmat_{ii}
=
\frac{\tr(\Lmat)}{d}
=
\frac{L\,r_{\mathrm{eff}}}{d}.
\]
Dividing by \(L\) gives
\[
\delta_{\mathrm{diag}}\ge \frac{r_{\mathrm{eff}}}{d}.
\]
Since \(r_{\mathrm{eff}}\ge 1\), this also implies
\[
\delta_{\mathrm{diag}}\ge \frac1d.
\]
This completes the proof.
\end{proof}

\begin{proof}[Proof of \Cref{prop:comp-sketch-basic}]
We first prove that \(\sqrt{\omega\ell r^2}\) is minimized at \(r=1\) for each sketch.

For the Haar sketch, by \Cref{tab:sketch_constants},
\[
\sqrt{\omega_{\mathrm{Haar}}\ell_{\mathrm{Haar}}r^2}
=
d\sqrt{\,1-\beta+\beta\frac{r_{\mathrm{eff}}}{d}\,},
\qquad
\beta=\frac{d(d-r)}{(d+2)(d-1)}.
\]
Since
\[
1-\beta+\beta\frac{r_{\mathrm{eff}}}{d}
=
1-\frac{(d-r)(d-r_{\mathrm{eff}})}{(d+2)(d-1)},
\]
and \(d-r_{\mathrm{eff}}\ge 0\) by \Cref{prop:verify-sketch-common}, the quantity inside the square root is nondecreasing in \(r\). Hence
\[
\sqrt{\omega_{\mathrm{Haar}}\ell_{\mathrm{Haar}}r^2}
\]
is minimized at \(r=1\).

For the Coordinate sketch,
\[
\sqrt{\omega_{\mathrm{Coord}}\ell_{\mathrm{Coord}}r^2}
=
d\sqrt{\frac{r-1}{d-1}+\frac{d-r}{d-1}\delta_{\mathrm{diag}}}
=
d\sqrt{\delta_{\mathrm{diag}}+\frac{r-1}{d-1}(1-\delta_{\mathrm{diag}})}.
\]
Since \(\delta_{\mathrm{diag}}\le 1\) by \Cref{prop:verify-sketch-common}, the quantity inside the square root is nondecreasing in \(r\). Hence
\[
\sqrt{\omega_{\mathrm{Coord}}\ell_{\mathrm{Coord}}r^2}
\]
is minimized at \(r=1\).

For the Gaussian sketch,
\[
\sqrt{\omega_{\mathrm{Gauss}}\ell_{\mathrm{Gauss}}r^2}
=
\sqrt{(d+r+1)(r+1+r_{\mathrm{eff}})}.
\]
Both factors are increasing in \(r\), so this quantity is increasing in \(r\), and therefore it is minimized at \(r=1\).

Substituting \(r=1\) into the three expressions gives
\[
Q_{\mathrm H}
= d\sqrt{\frac{r_{\mathrm{eff}}+2}{d+2}},
\qquad
Q_{\mathrm C}
= d\sqrt{\delta_{\mathrm{diag}}},
\qquad
Q_{\mathrm G}
= \sqrt{(d+2)(r_{\mathrm{eff}}+2)}.
\]

By \Cref{prop:verify-sketch-common},
\[
1\le r_{\mathrm{eff}}\le d,
\qquad
\frac1d\le \delta_{\mathrm{diag}}\le1,
\qquad
\delta_{\mathrm{diag}}\ge \frac{r_{\mathrm{eff}}}{d}.
\]
Therefore,
\[
d\sqrt{\frac{3}{d+2}}
\le Q_{\mathrm H}\le d,
\qquad
\sqrt d\le Q_{\mathrm C}\le d,
\qquad
\sqrt{3(d+2)}\le Q_{\mathrm G}\le d+2.
\]

Next, the Haar and Gaussian factors satisfy
\[
\frac{Q_{\mathrm G}}{Q_{\mathrm H}}
=
\frac{\sqrt{(d+2)(r_{\mathrm{eff}}+2)}}
{d\sqrt{(r_{\mathrm{eff}}+2)/(d+2)}}
=
1+\frac{2}{d},
\]
and hence
\[
Q_{\mathrm G}=\left(1+\frac{2}{d}\right)Q_{\mathrm H}.
\]

For the Haar--Coordinate comparison, using
\(\delta_{\mathrm{diag}}\ge r_{\mathrm{eff}}/d\), we obtain
\[
Q_{\mathrm C}^2
=
d^2\delta_{\mathrm{diag}}
\ge
d\,r_{\mathrm{eff}}.
\]
On the other hand,
\[
Q_{\mathrm H}^2
=
d^2\frac{r_{\mathrm{eff}}+2}{d+2}
\le
d(r_{\mathrm{eff}}+2).
\]
Thus
\[
\frac{Q_{\mathrm H}^2}{Q_{\mathrm C}^2}
\le
\frac{r_{\mathrm{eff}}+2}{r_{\mathrm{eff}}}
\le 3,
\]
where the last inequality follows from \(r_{\mathrm{eff}}\ge1\). Hence
\[
Q_{\mathrm H}\le \sqrt3\,Q_{\mathrm C}.
\]

Finally, if \(\Lmat=e_1e_1^\top\), then
\[
L=1,\qquad
r_{\mathrm{eff}}=\frac{\tr(\Lmat)}{L}=1,
\qquad
\delta_{\mathrm{diag}}=\frac{\|\diag(\Lmat)\|}{L}=1.
\]
Therefore,
\[
Q_{\mathrm C}=d,
\qquad
Q_{\mathrm H}
=
d\sqrt{\frac{3}{d+2}}
=
\sqrt{\frac{3}{d+2}}\,Q_{\mathrm C}.
\]
This completes the proof.
\end{proof}

\section{Experimental Details}
\label{app:experiment-details}

\subsection{Quadratic instances}
\label{app:quadratic-details}

We describe the four quadratic instances used in \Cref{subsec:quadratic}. In all
cases,
\[
f(x)=\frac12 x^\top \Lmat x,
\qquad d=1000,
\qquad f^\star=0.
\]

\paragraph{Convex diagonal.}
Let
\[
\Lmat
=
\diag\!\left(
1,
\frac{1}{d-2},
\ldots,
\frac{1}{d-2},
0
\right).
\]
Then \(\Lmat\succeq 0\) but is not positive definite. Its eigenvalues are
\[
\left\{
1,
\frac{1}{d-2},
\ldots,
\frac{1}{d-2},
0
\right\}.
\]
Hence
\[
L=\|\Lmat\|=1,
\qquad
\tr(\Lmat)=1+(d-2)\frac{1}{d-2}=2,
\qquad
r_{\mathrm{eff}}=\frac{\tr(\Lmat)}{\|\Lmat\|}=2.
\]
Moreover, since the maximum diagonal entry of \(\Lmat\) is \(1\),
\[
\delta_{\mathrm{diag}}
=
\frac{\|\diag(\Lmat)\|}{\|\Lmat\|}
=
1.
\]

\paragraph{Convex dense.}
Let
\[
\mathbf{1}\coloneqq(1,\dots,1)^\top\in\R^d,
\qquad
u\coloneqq\frac{1}{\sqrt d}(1,-1,1,-1,\dots,1,-1)^\top\in\R^d.
\]
Since \(d\) is even, \(u^\top \mathbf{1}=0\). Define
\[
\Lmat
=
\frac{1}{d-2}I_d
+
\left(1-\frac{1}{d-2}\right)uu^\top
-
\frac{1}{d(d-2)}\,\mathbf{1}\mathbf{1}^\top.
\]
Because \(u\) and \(\mathbf{1}\) are orthogonal, the eigenspaces split as
follows. Along \(u\), the eigenvalue is
\[
\frac{1}{d-2}+\left(1-\frac{1}{d-2}\right)=1.
\]
Along \(\mathbf{1}\), the eigenvalue is
\[
\frac{1}{d-2}-\frac{1}{d(d-2)}\,d=0.
\]
On the orthogonal complement of \(\mathrm{span}\{u,\mathbf{1}\}\), the
eigenvalue is
\[
\frac{1}{d-2}.
\]
Thus the eigenvalues are
\[
\left\{
1,
\frac{1}{d-2},
\ldots,
\frac{1}{d-2},
0
\right\},
\]
and therefore
\[
L=\|\Lmat\|=1,
\qquad
\tr(\Lmat)=1+(d-2)\frac{1}{d-2}=2,
\qquad
r_{\mathrm{eff}}=\frac{\tr(\Lmat)}{\|\Lmat\|}=2.
\]
Each diagonal entry equals
\[
\frac{1}{d-2}
+
\frac{1}{d}\left(1-\frac{1}{d-2}\right)
-
\frac{1}{d(d-2)}
=
\frac{2}{d},
\]
and hence
\[
\|\diag(\Lmat)\|=\frac{2}{d},
\qquad
\delta_{\mathrm{diag}}
=
\frac{\|\diag(\Lmat)\|}{\|\Lmat\|}
=
\frac{2}{d}.
\]

\paragraph{Strongly convex diagonal.}
Let
\[
\Lmat
=
\diag\!\left(
1,
\frac{1}{d-1},
\dots,
\frac{1}{d-1}
\right).
\]
Then the eigenvalues of \(\Lmat\) are
\[
\left\{
1,
\frac{1}{d-1},
\ldots,
\frac{1}{d-1}
\right\}.
\]
Therefore,
\[
\mu=\lambda_{\min}(\Lmat)=\frac{1}{d-1},
\qquad
L=\|\Lmat\|=\lambda_{\max}(\Lmat)=1.
\]
Moreover,
\[
\tr(\Lmat)=1+(d-1)\frac{1}{d-1}=2,
\qquad
r_{\mathrm{eff}}=\frac{\tr(\Lmat)}{\|\Lmat\|}=2.
\]
Since the maximum diagonal entry of \(\Lmat\) is \(1\), we have
\[
\delta_{\mathrm{diag}}
=
\frac{\|\diag(\Lmat)\|}{\|\Lmat\|}
=
1.
\]

\paragraph{Strongly convex dense.}
Let
\[
\Lmat
=
\frac{1}{d-1}I_d
+
\frac{d-2}{d(d-1)}\,\mathbf{1}\mathbf{1}^\top.
\]
The vector \(\mathbf{1}\) is an eigenvector with eigenvalue
\[
\frac{1}{d-1}
+
\frac{d-2}{d(d-1)}\cdot d
=
1,
\]
while every vector orthogonal to \(\mathbf{1}\) is an eigenvector with eigenvalue
\[
\frac{1}{d-1}.
\]
Therefore, the eigenvalues of \(\Lmat\) are
\[
\left\{
1,
\frac{1}{d-1},
\ldots,
\frac{1}{d-1}
\right\}.
\]
Hence
\[
\mu=\lambda_{\min}(\Lmat)=\frac{1}{d-1},
\qquad
L=\|\Lmat\|=\lambda_{\max}(\Lmat)=1.
\]
Moreover,
\[
\tr(\Lmat)
=
1+(d-1)\frac{1}{d-1}
=
2,
\qquad
r_{\mathrm{eff}}
=
\frac{\tr(\Lmat)}{\|\Lmat\|}
=
2.
\]
Finally, each diagonal entry equals
\[
\frac{1}{d-1}
+
\frac{d-2}{d(d-1)}
=
\frac{2}{d},
\]
and hence
\[
\|\diag(\Lmat)\|=\frac{2}{d},
\qquad
\delta_{\mathrm{diag}}
=
\frac{\|\diag(\Lmat)\|}{\|\Lmat\|}
=
\frac{2}{d}.
\]

\subsection{Effect of the sketch dimension}
\label{app:exp-details-rscan}

In the main quadratic experiments, we fixed the sketch dimension to \(r=1\).
Here we additionally examine how the sketch dimension affects oracle-axis convergence.
More precisely, for each of the four quadratic instances and for each sketch family
(Haar, Block-coordinate, and Gaussian), we compare the proposed method with
\(r\in\{1,10,100\}\) against the corresponding full-dimensional accelerated method
under the same oracle budget \(10{,}000\).
All other settings are unchanged: \(d=1000\), \(10\) random seeds, and
independent Gaussian initialization \(x_0\sim\mathcal N(0,I_d)\).

The purpose of this experiment is to examine whether the theoretical prediction
that \(r=1\) is oracle-optimal is also reflected numerically in the quadratic examples.
For convenience, we recall the four quadratic instances and their associated quantities
\((r_{\mathrm{eff}},\delta_{\mathrm{diag}})\):

\begin{itemize}
    \item \textbf{Convex diagonal:}
    \[
    \Lmat
    =
    \diag\!\left(
    1,\frac{1}{d-2},\ldots,\frac{1}{d-2},0
    \right),
    \qquad
    r_{\mathrm{eff}}=2,
    \qquad
    \delta_{\mathrm{diag}}=1.
    \]

    \item \textbf{Convex dense:}
    \[
    \Lmat
    =
    \frac{1}{d-2}I_d
    +
    \left(1-\frac{1}{d-2}\right)uu^\top
    -
    \frac{1}{d(d-2)}\,\mathbf{1}\mathbf{1}^\top,
    \qquad
    r_{\mathrm{eff}}=2,
    \qquad
    \delta_{\mathrm{diag}}=\frac{2}{d}.
    \]

    \item \textbf{Strongly convex diagonal:}
    \[
    \Lmat
    =
    \diag\!\left(
    1,\frac{1}{d-1},\ldots,\frac{1}{d-1}
    \right),
    \qquad
    r_{\mathrm{eff}}=2,
    \qquad
    \delta_{\mathrm{diag}}=1.
    \]

    \item \textbf{Strongly convex dense:}
    \[
    \Lmat
    =
    \frac{1}{d-1}I_d
    +
    \frac{d-2}{d(d-1)}\,\mathbf{1}\mathbf{1}^\top,
    \qquad
    r_{\mathrm{eff}}=2,
    \qquad
    \delta_{\mathrm{diag}}=\frac{2}{d}.
    \]
\end{itemize}

\begin{figure}[p]
\centering

\begin{subfigure}[t]{0.32\linewidth}
    \centering
    \includegraphics[width=\linewidth]{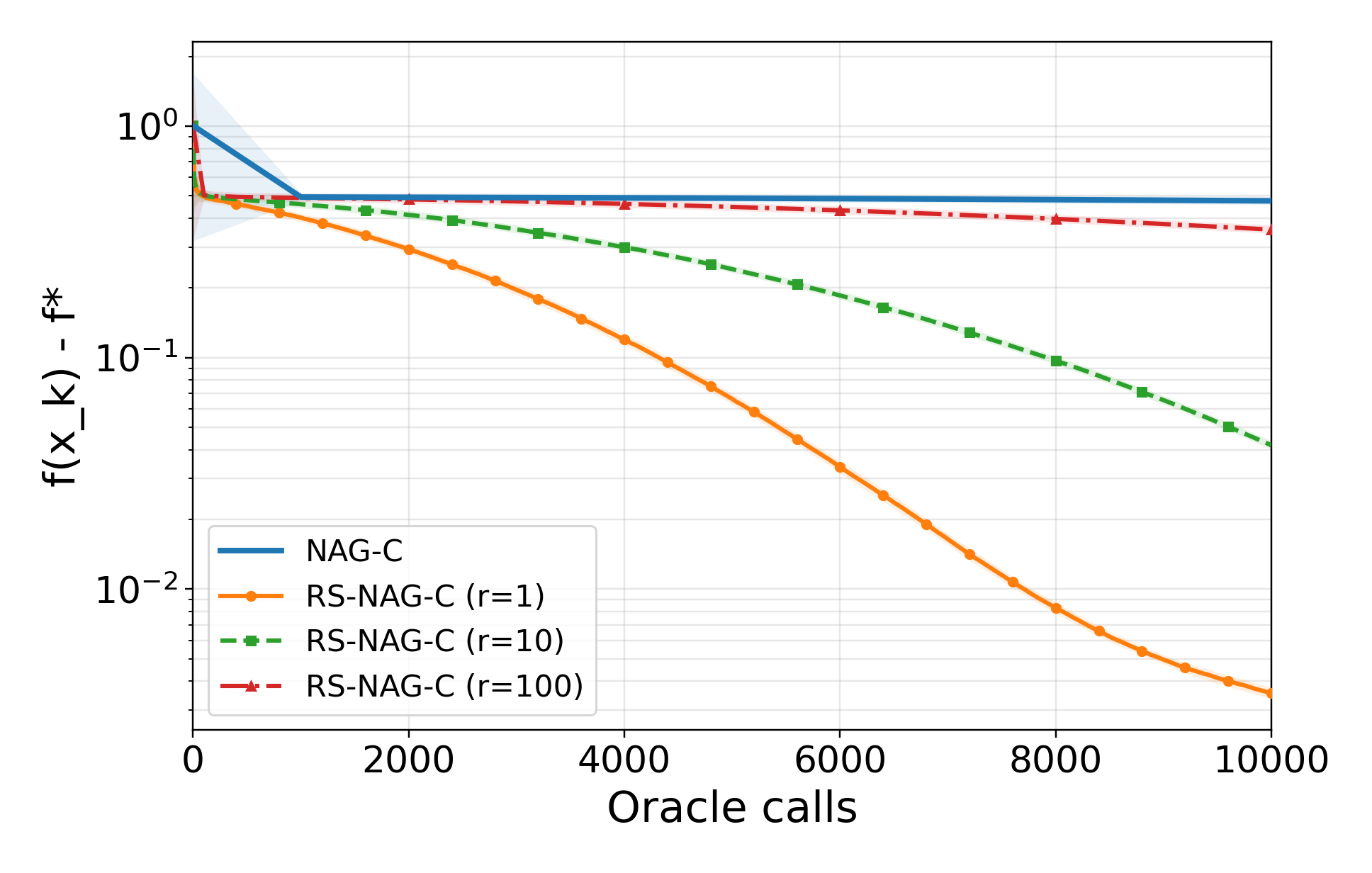}
    \caption{Convex diagonal / Haar}
    \label{fig:rscan-convex-diag-haar}
\end{subfigure}\hfill
\begin{subfigure}[t]{0.32\linewidth}
    \centering
    \includegraphics[width=\linewidth]{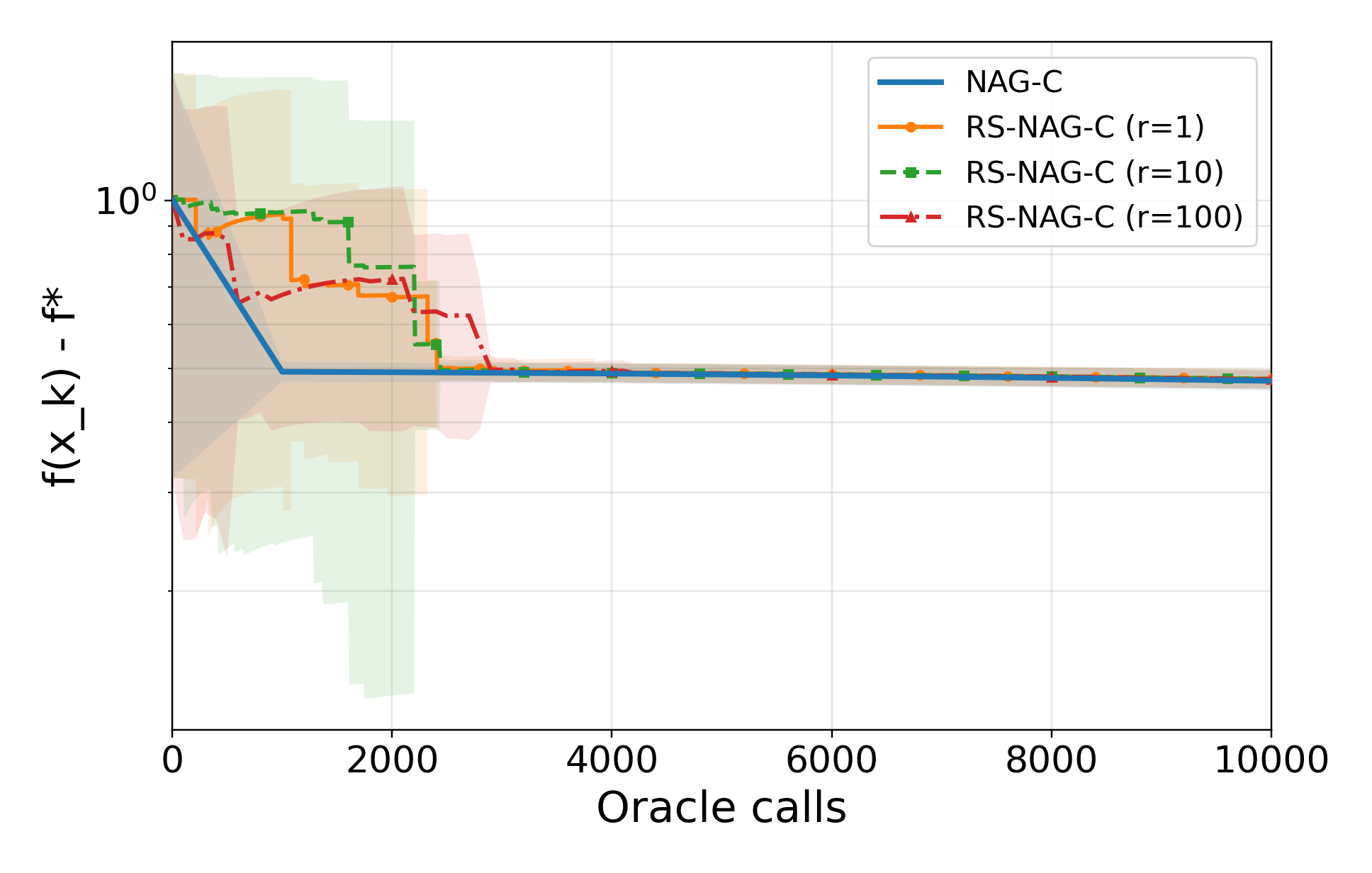}
    \caption{Convex diagonal / Block-coordinate}
    \label{fig:rscan-convex-diag-coord}
\end{subfigure}\hfill
\begin{subfigure}[t]{0.32\linewidth}
    \centering
    \includegraphics[width=\linewidth]{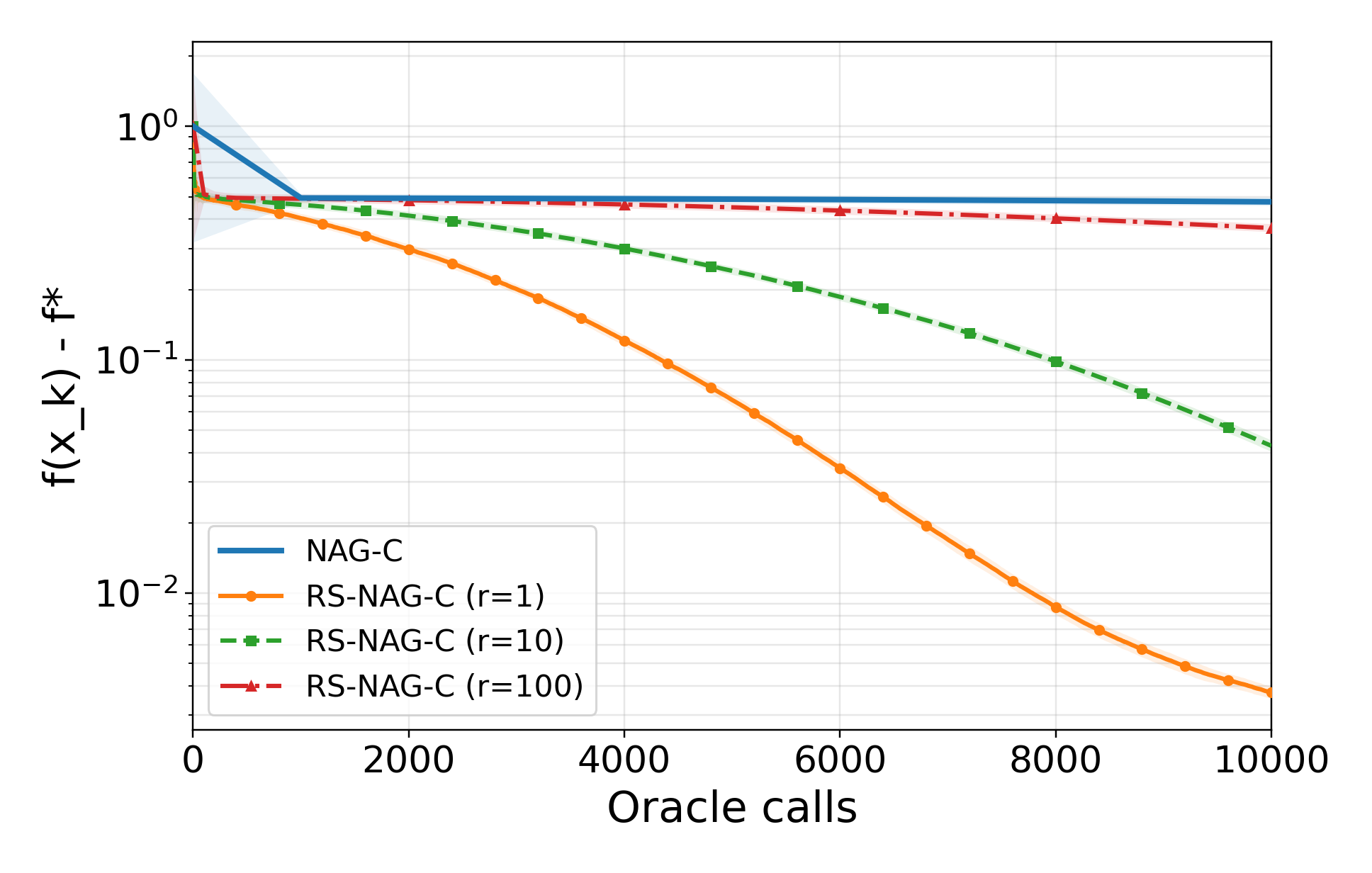}
    \caption{Convex diagonal / Gaussian}
    \label{fig:rscan-convex-diag-gauss}
\end{subfigure}

\vspace{0.5em}

\begin{subfigure}[t]{0.32\linewidth}
    \centering
    \includegraphics[width=\linewidth]{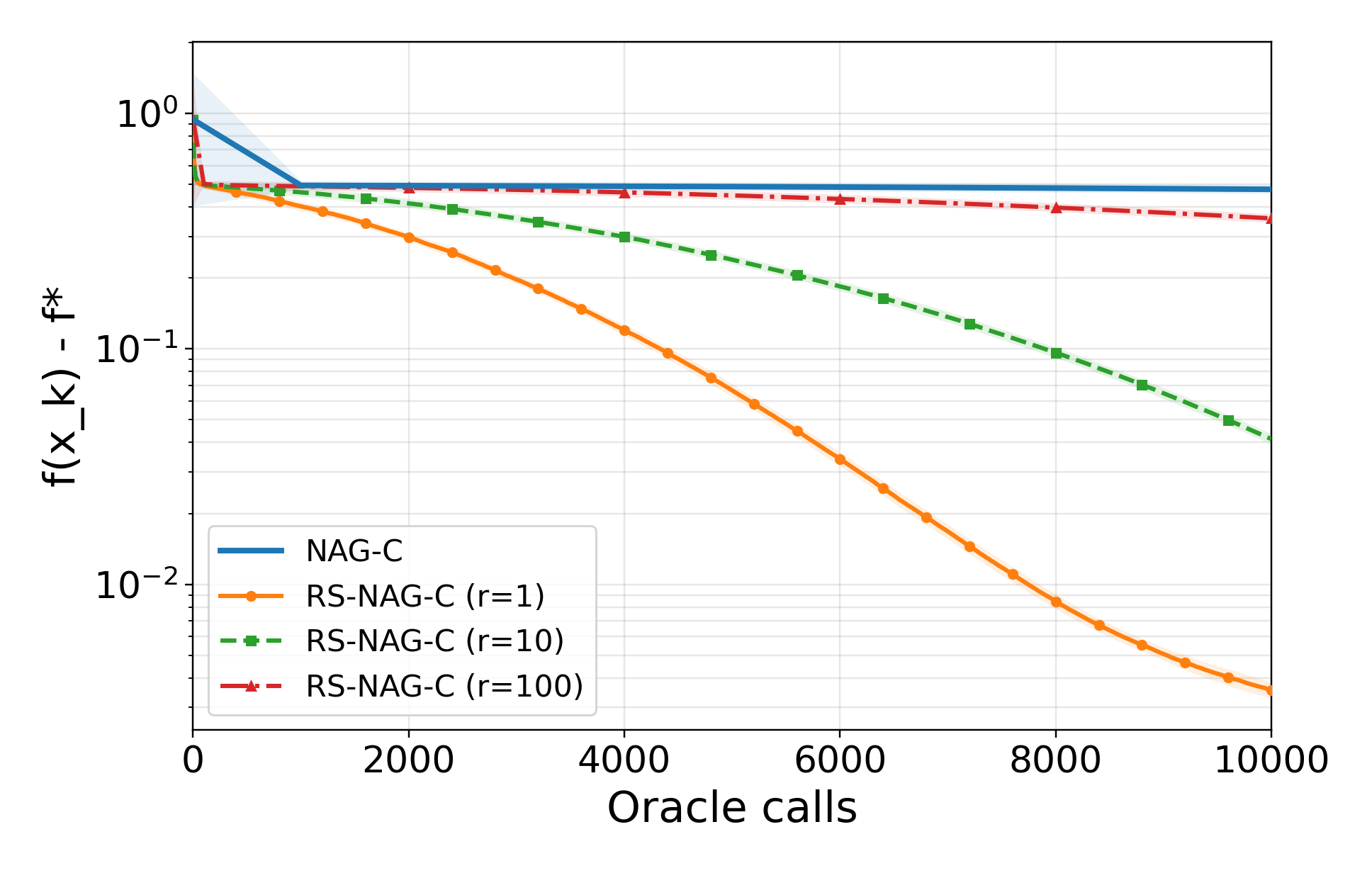}
    \caption{Convex dense / Haar}
    \label{fig:rscan-convex-dense-haar}
\end{subfigure}\hfill
\begin{subfigure}[t]{0.32\linewidth}
    \centering
    \includegraphics[width=\linewidth]{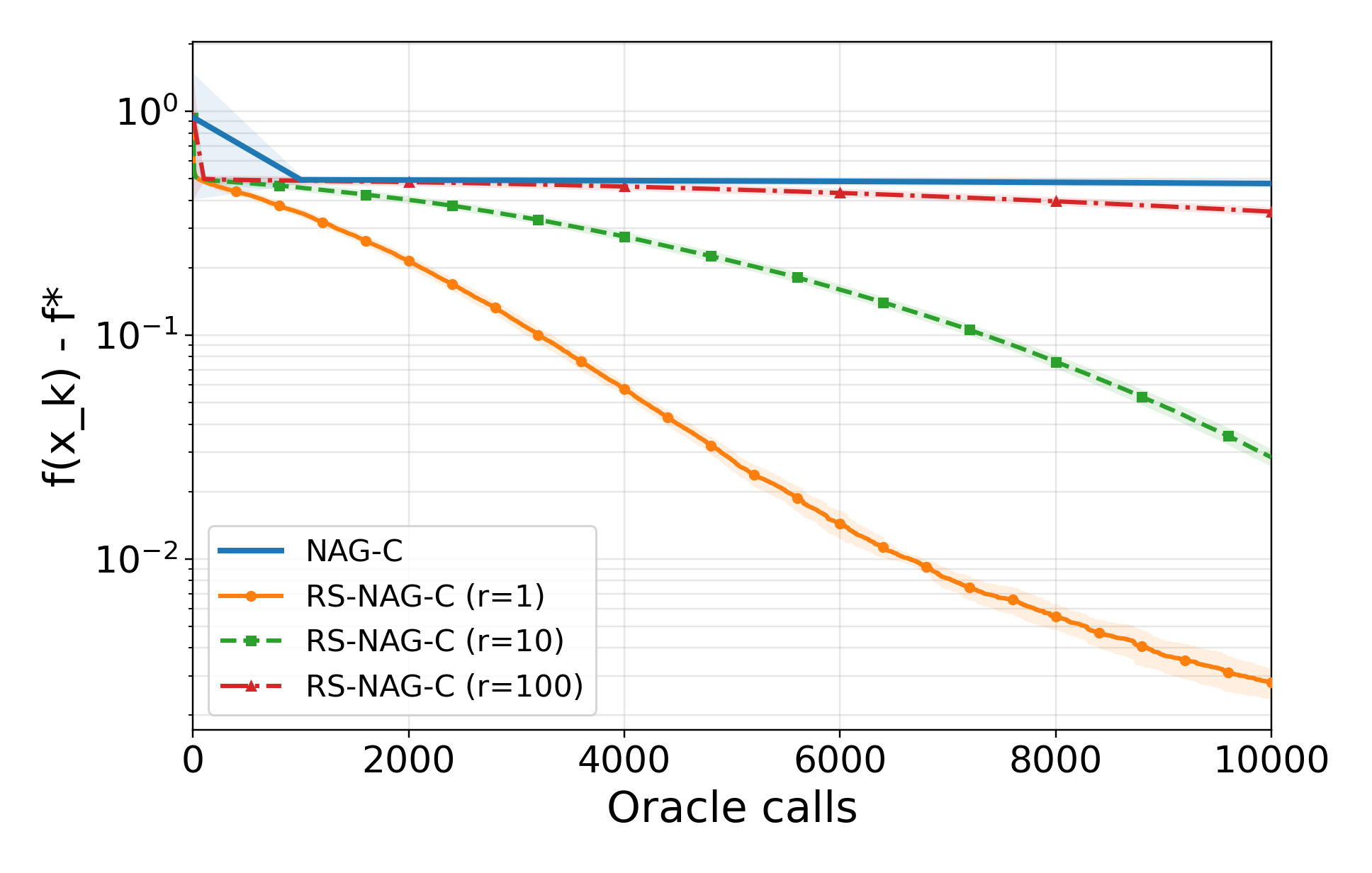}
    \caption{Convex dense / Block-coordinate}
    \label{fig:rscan-convex-dense-coord}
\end{subfigure}\hfill
\begin{subfigure}[t]{0.32\linewidth}
    \centering
    \includegraphics[width=\linewidth]{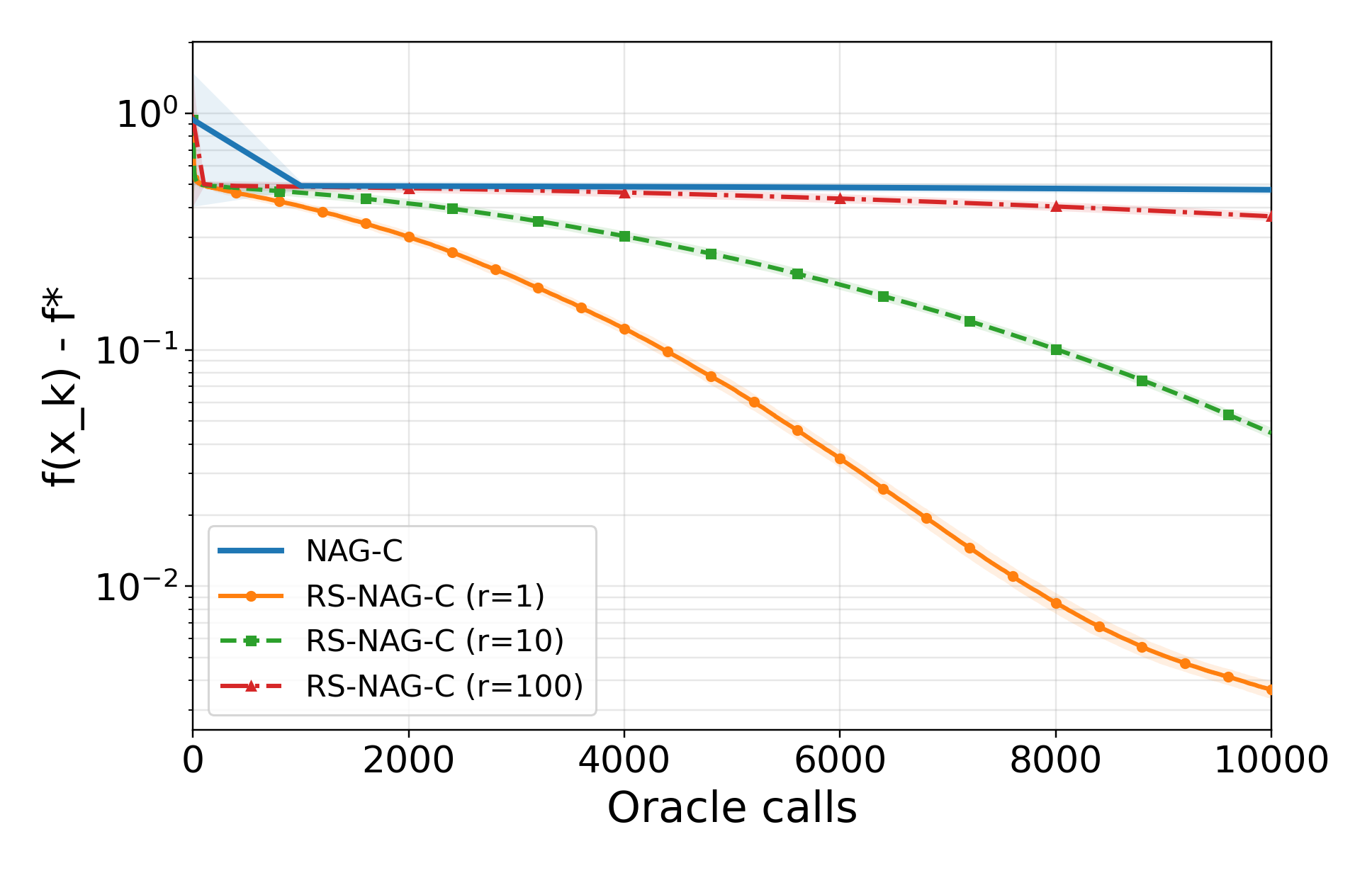}
    \caption{Convex dense / Gaussian}
    \label{fig:rscan-convex-dense-gauss}
\end{subfigure}

\vspace{0.5em}

\begin{subfigure}[t]{0.32\linewidth}
    \centering
    \includegraphics[width=\linewidth]{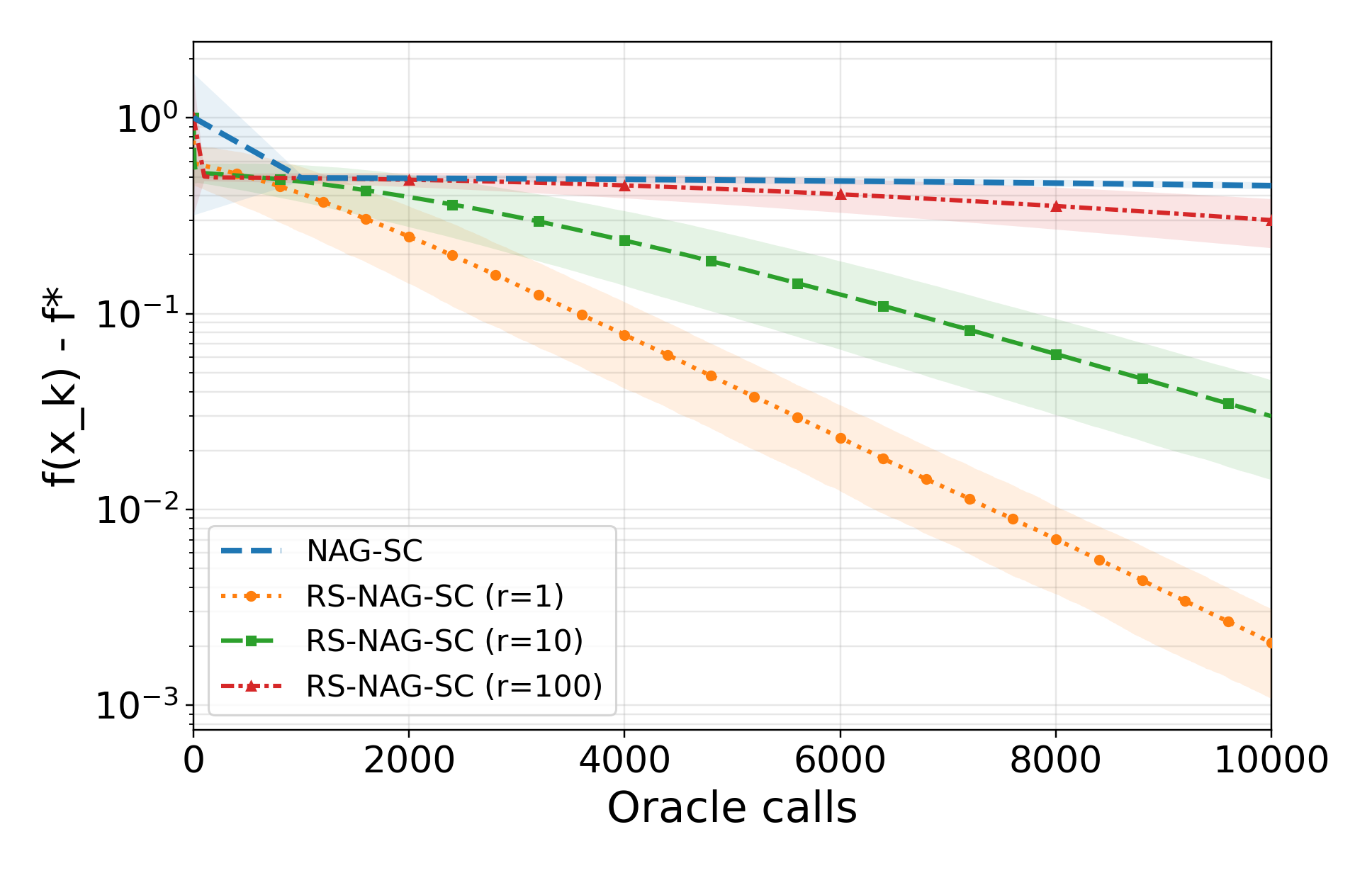}
    \caption{Strongly convex diagonal / Haar}
    \label{fig:rscan-sc-diag-haar}
\end{subfigure}\hfill
\begin{subfigure}[t]{0.32\linewidth}
    \centering
    \includegraphics[width=\linewidth]{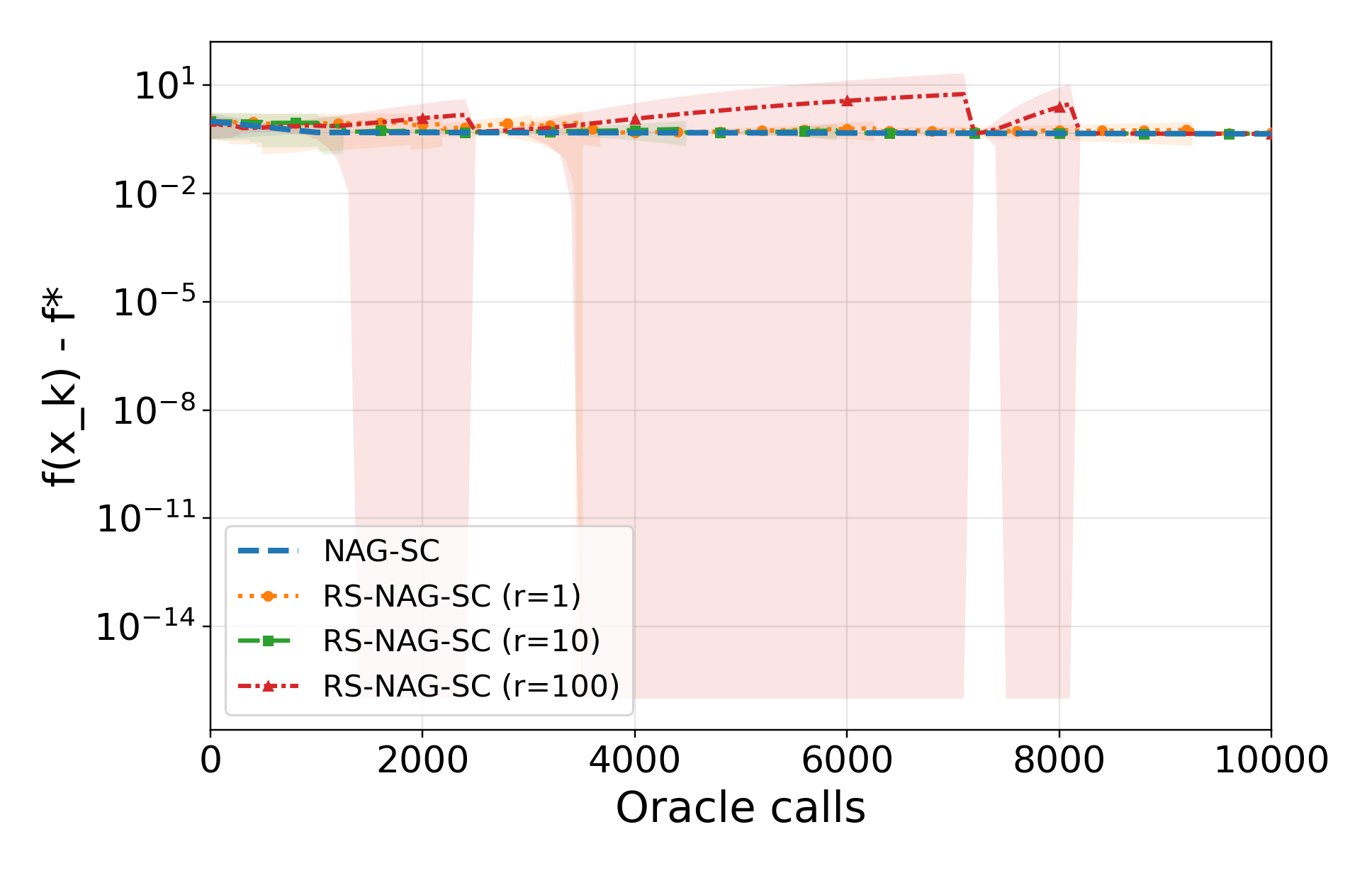}
    \caption{Strongly convex diagonal / Block-coordinate}
    \label{fig:rscan-sc-diag-coord}
\end{subfigure}\hfill
\begin{subfigure}[t]{0.32\linewidth}
    \centering
    \includegraphics[width=\linewidth]{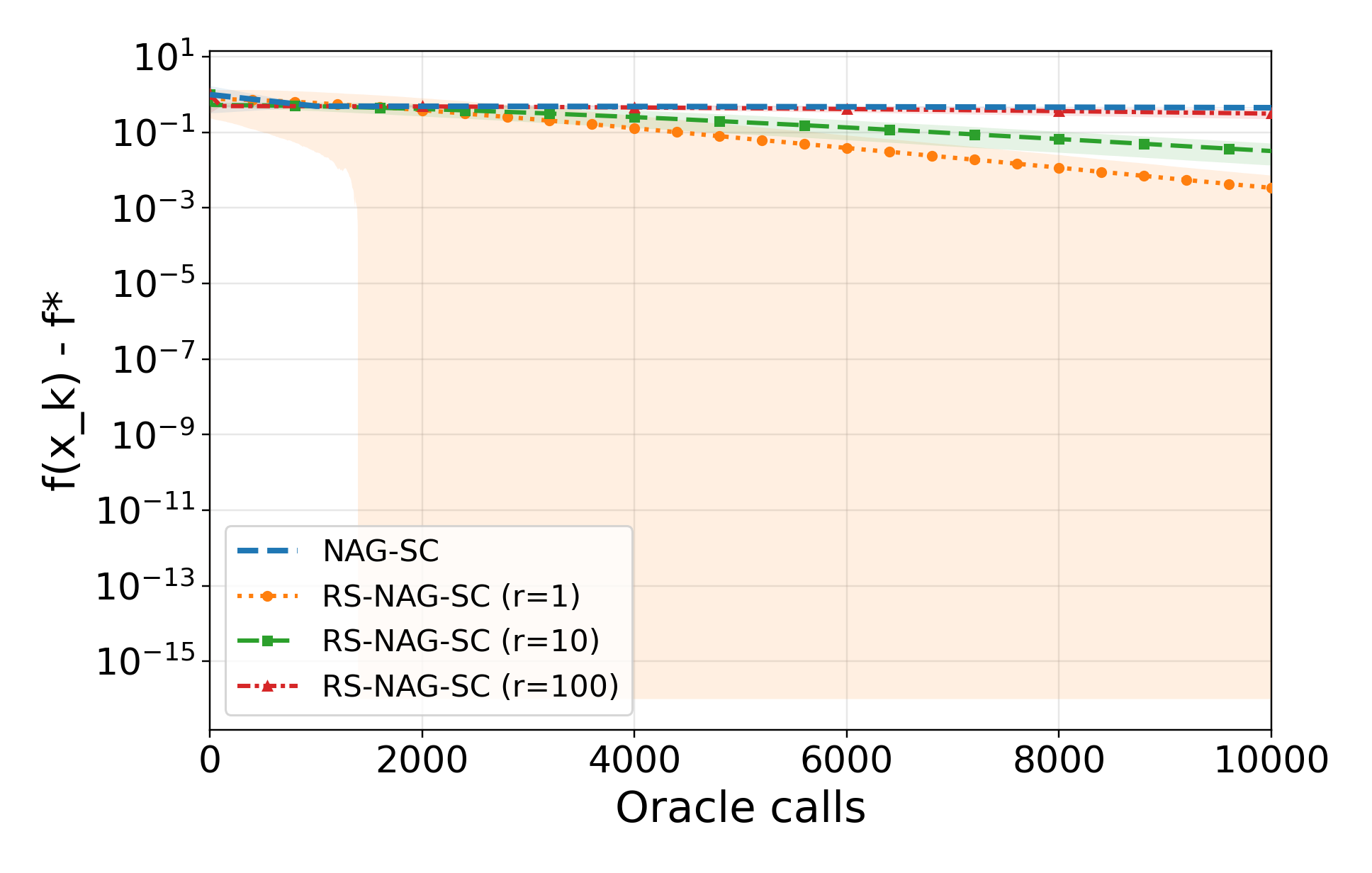}
    \caption{Strongly convex diagonal / Gaussian}
    \label{fig:rscan-sc-diag-gauss}
\end{subfigure}

\vspace{0.5em}

\begin{subfigure}[t]{0.32\linewidth}
    \centering
    \includegraphics[width=\linewidth]{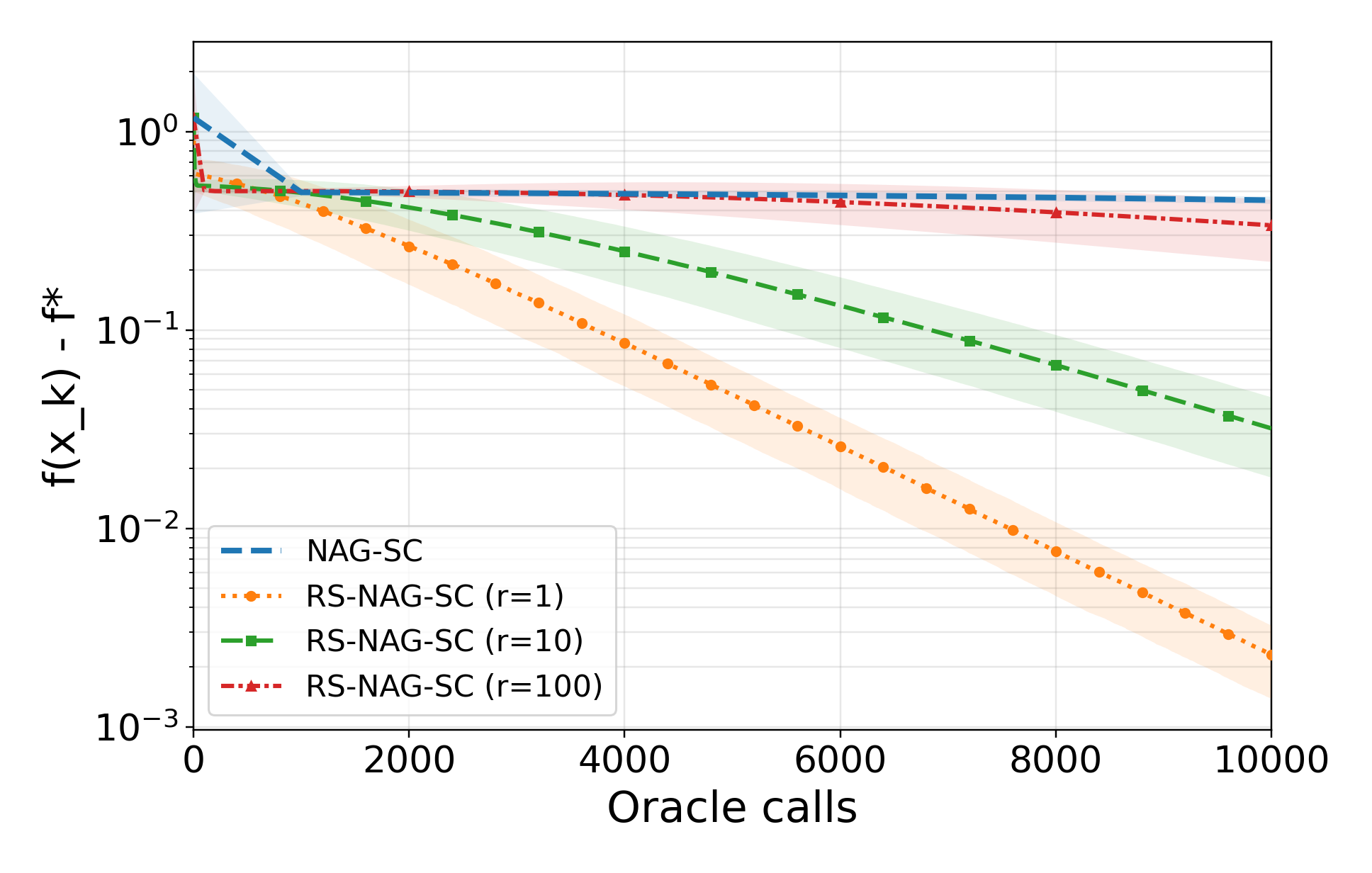}
    \caption{Strongly convex dense / Haar}
    \label{fig:rscan-sc-dense-haar}
\end{subfigure}\hfill
\begin{subfigure}[t]{0.32\linewidth}
    \centering
    \includegraphics[width=\linewidth]{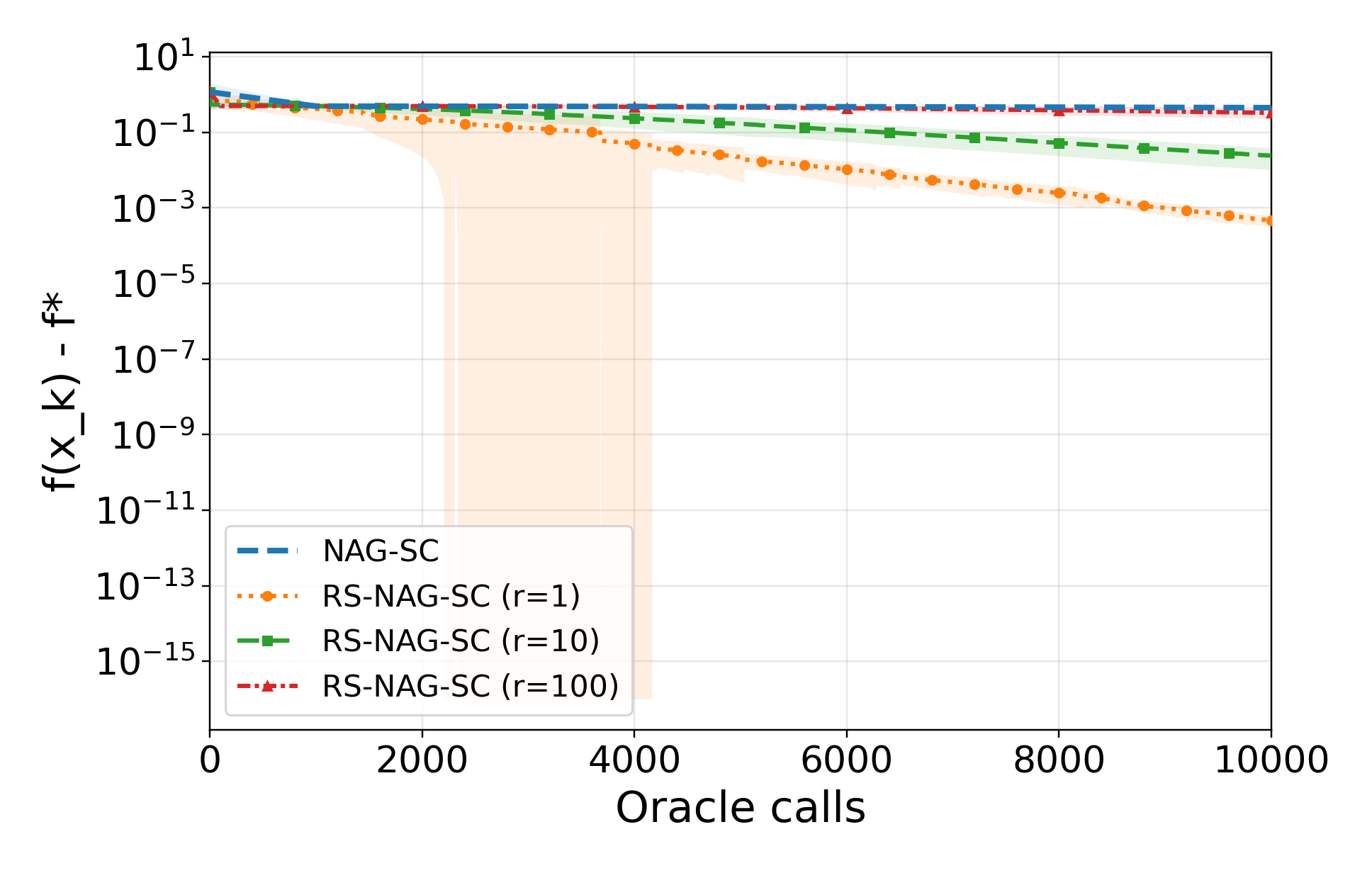}
    \caption{Strongly convex dense / Block-coordinate}
    \label{fig:rscan-sc-dense-coord}
\end{subfigure}\hfill
\begin{subfigure}[t]{0.32\linewidth}
    \centering
    \includegraphics[width=\linewidth]{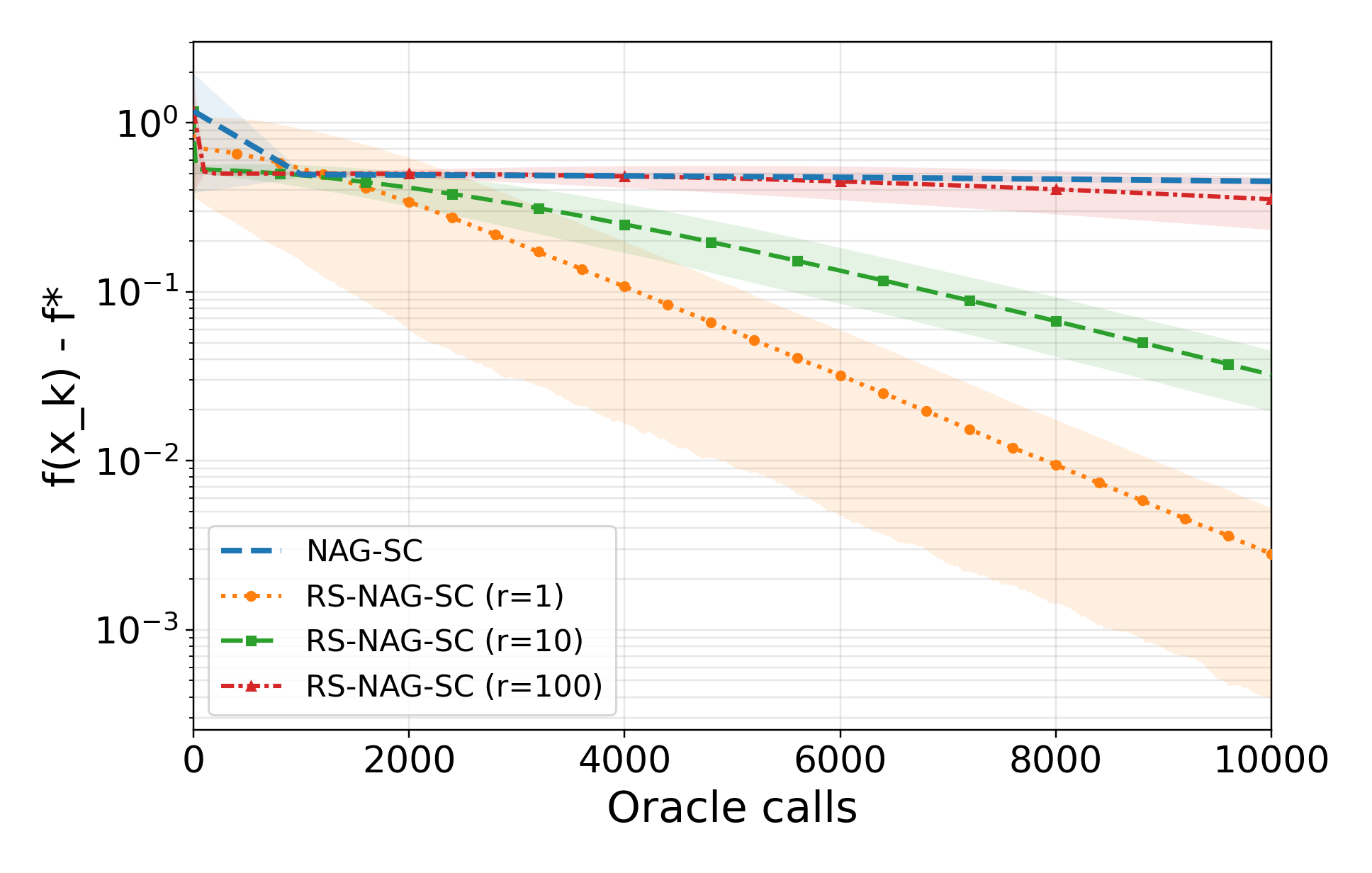}
    \caption{Strongly convex dense / Gaussian}
    \label{fig:rscan-sc-dense-gauss}
\end{subfigure}

\caption{Oracle-axis convergence in the sketch-dimension scan.
The horizontal axis shows the cumulative number of oracle calls, and the vertical axis shows the objective gap \(f(x_k)-f^\star\) on a logarithmic scale.
Each row corresponds to one quadratic instance, and each column corresponds to one sketch family.
Each panel compares RS-NAG with \(r\in\{1,10,100\}\) and the corresponding full-dimensional accelerated method.}
\label{fig:quadratic-rscan}
\end{figure}

The results are shown in \Cref{fig:quadratic-rscan}.
As summarized in \Cref{tab:sketch_constants}, the oracle-complexity comparison
is governed by the quantity \(\sqrt{\omega\ell r^2}\), and \Cref{prop:comp-sketch-basic}
shows that this quantity is minimized at \(r=1\) for the Haar, Block-coordinate,
and Gaussian sketches.

The plots are broadly consistent with this prediction.
In all Haar and Gaussian panels, and also in the Block-coordinate panels for the two dense
instances, the oracle-axis performance deteriorates as \(r\) increases from \(1\) to \(10\)
and \(100\).
The only clear exceptions are the Block-coordinate panels for the convex diagonal and
strongly convex diagonal instances; see
\Cref{fig:rscan-convex-diag-coord,fig:rscan-sc-diag-coord}.

This behavior is also explained by \Cref{tab:sketch_constants}.
For the Block-coordinate sketch,
\[
\sqrt{\omega_{\mathrm{Coord}}\ell_{\mathrm{Coord}}r^2}
=
d\sqrt{\frac{r-1}{d-1}+\frac{d-r}{d-1}\delta_{\mathrm{diag}}}.
\]
For the two diagonal instances considered here, we have \(\delta_{\mathrm{diag}}=1\), and hence
\[
\sqrt{\omega_{\mathrm{Coord}}\ell_{\mathrm{Coord}}r^2}
=
d\sqrt{\frac{r-1}{d-1}+\frac{d-r}{d-1}}
=
d,
\]
independently of \(r\).
Therefore, in these two cases, the theory itself does not predict any oracle-complexity
improvement from taking a smaller sketch dimension, which explains why the separation
among \(r=1,10,100\) is weak in
\Cref{fig:rscan-convex-diag-coord,fig:rscan-sc-diag-coord}.

Overall, except for these two Block-coordinate diagonal panels, the numerical results are
well aligned with the theoretical prediction that \(r=1\) is oracle-optimal in the present
quadratic examples.

\subsection{Matrix smoothness for logistic regression}
\label{app:logreg-smoothness}

We derive the matrix smoothness bound used in \Cref{subsec:logreg}. Recall the
\(\ell_2\)-regularized logistic regression objective
\[
f(x)
=
\frac{1}{n}\sum_{i=1}^n \log\!\bigl(1+\exp(-y_i a_i^\top x)\bigr)
+\frac{\mu}{2}\|x\|_2^2,
\qquad \mu>0,
\]
where \(a_i\in\R^d\) and \(y_i\in\{-1,+1\}\). Its gradient is
\[
\nabla f(x)
=
-\frac{1}{n}\sum_{i=1}^n
\frac{y_i a_i}{1+\exp(y_i a_i^\top x)}
+\mu x.
\]
Let \(\sigma(t)=1/(1+e^{-t})\). The Hessian is
\[
\nabla^2 f(x)
=
\frac{1}{n}\sum_{i=1}^n
\sigma(y_i a_i^\top x)\bigl(1-\sigma(y_i a_i^\top x)\bigr)\,a_i a_i^\top
+\mu I_d.
\]
Since
\[
0\le \sigma(t)(1-\sigma(t))\le \frac14
\qquad\text{for all }t\in\R,
\]
we have
\[
\nabla^2 f(x)
\preceq
\frac{1}{4n}\sum_{i=1}^n a_i a_i^\top+\mu I_d.
\]
If \(A\in\R^{n\times d}\) denotes the data matrix whose \(i\)th row is
\(a_i^\top\), then \(\sum_{i=1}^n a_i a_i^\top=A^\top A\). Thus we may take
\[
\Lmat
=
\frac{1}{4n}A^\top A+\mu I_d.
\]
This gives the matrix smoothness bound used in the logistic-regression
experiments.

\subsection{Additional logistic-regression results on standard benchmarks}
\label{app:logreg-additional-datasets}

\begin{figure*}[t]
\centering
\begin{subfigure}[t]{0.32\textwidth}
    \centering
    \includegraphics[width=\linewidth]{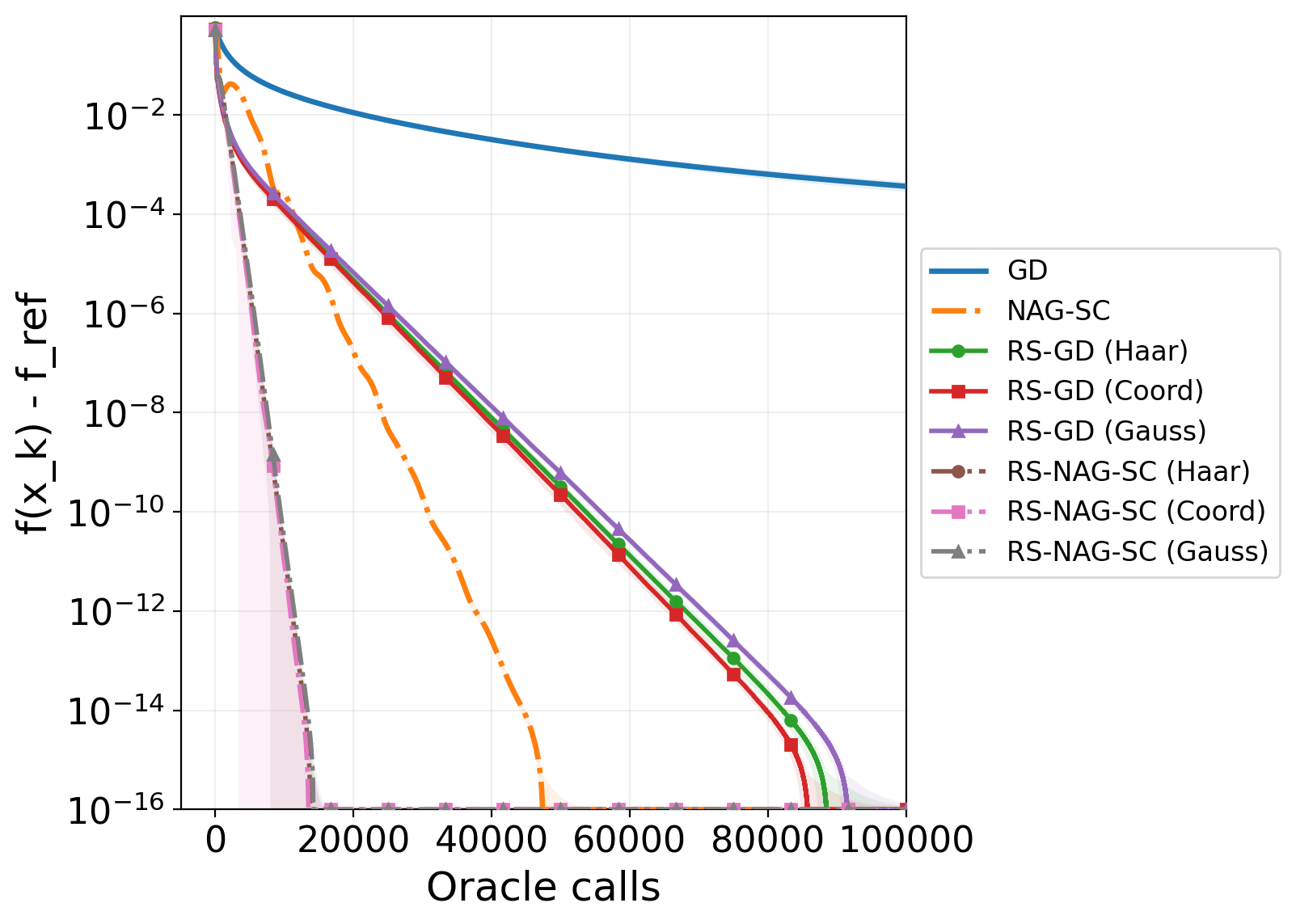}
    \caption{\text{phishing}}
\end{subfigure}
\hfill
\begin{subfigure}[t]{0.32\textwidth}
    \centering
    \includegraphics[width=\linewidth]{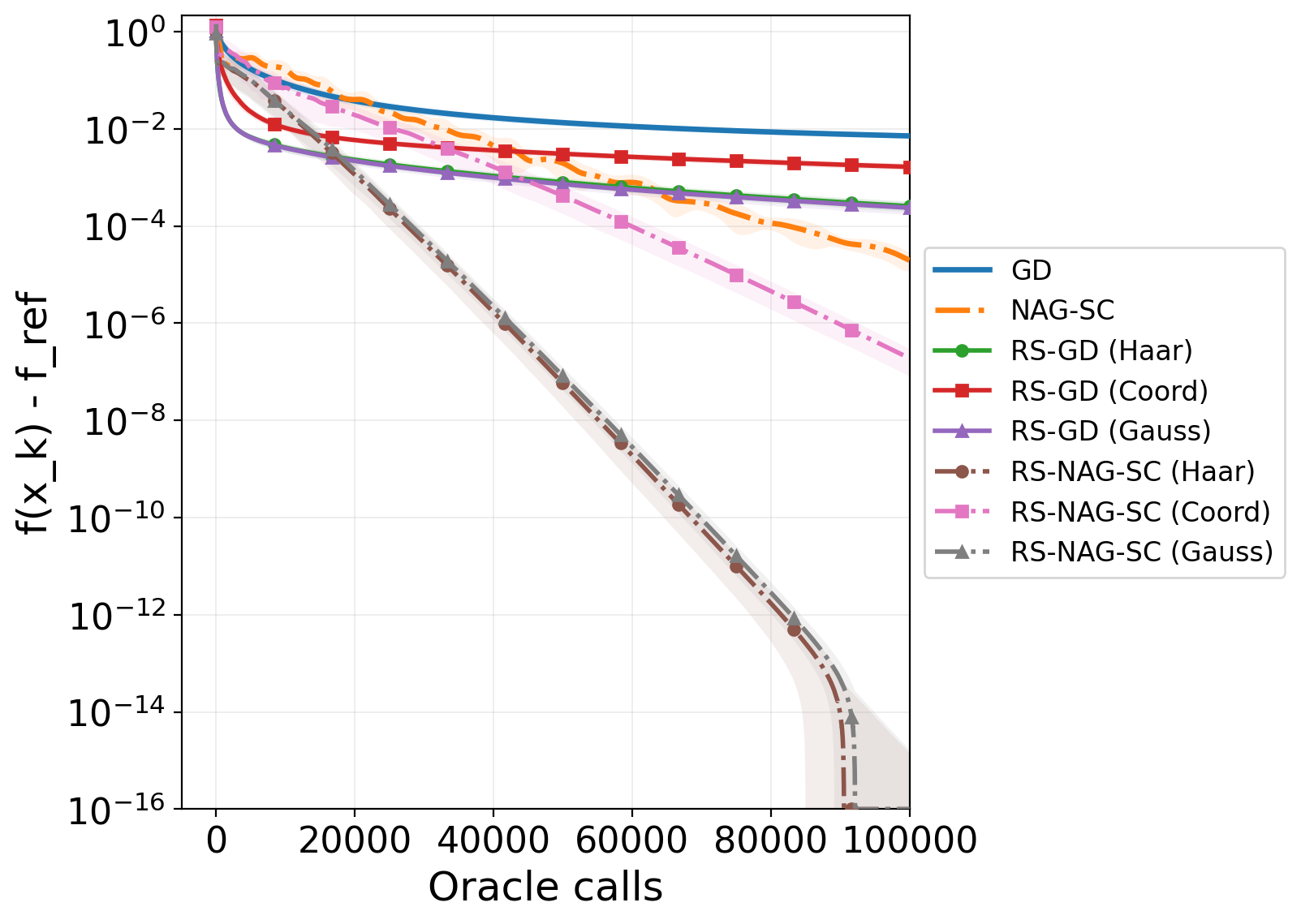}
    \caption{\text{a9a}}
\end{subfigure}
\hfill
\begin{subfigure}[t]{0.32\textwidth}
    \centering
    \includegraphics[width=\linewidth]{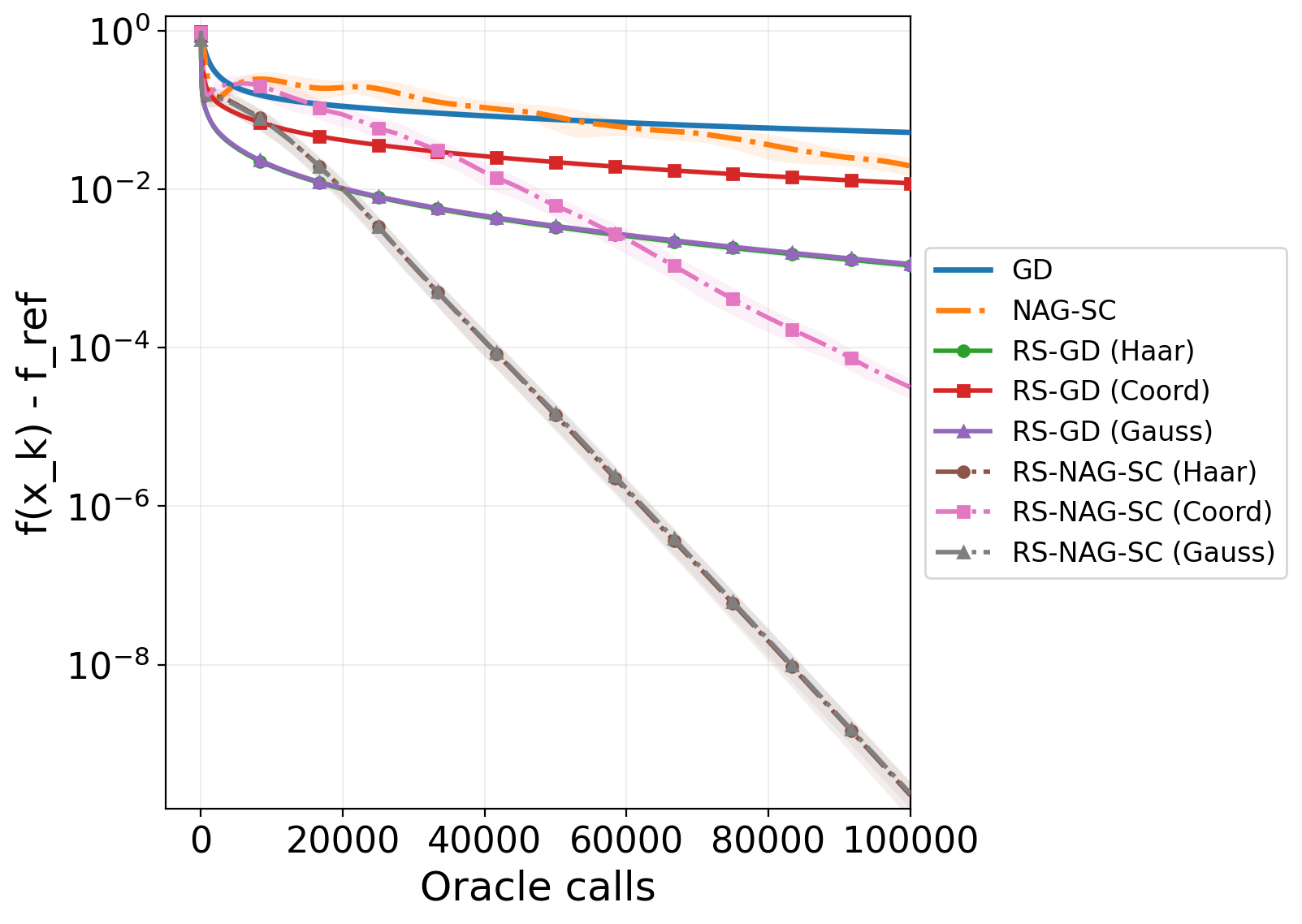}
    \caption{\text{w8a}}
\end{subfigure}

\vspace{0.6em}

\begin{subfigure}[t]{0.32\textwidth}
    \centering
    \includegraphics[width=\linewidth]{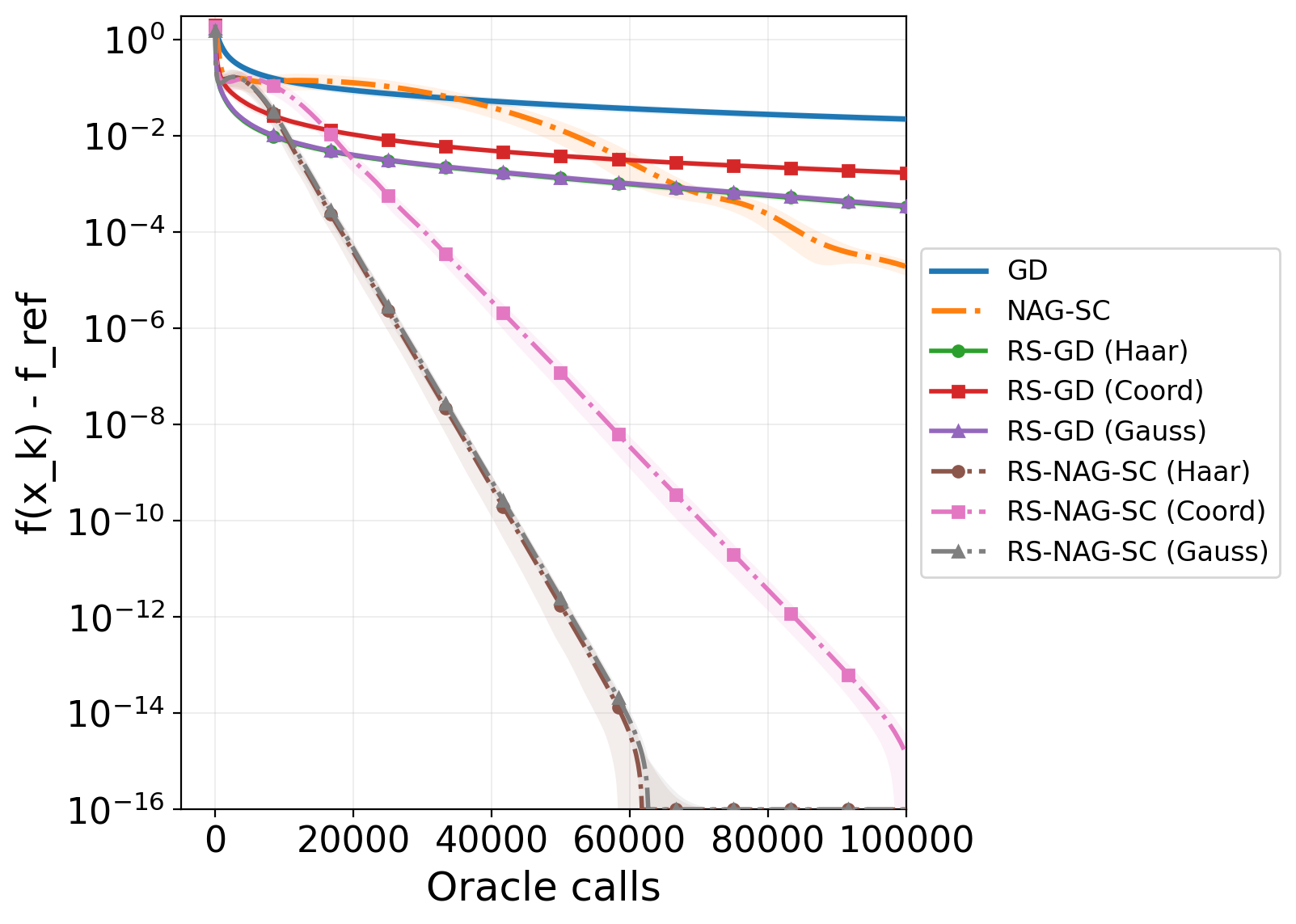}
    \caption{\text{mushroom}}
\end{subfigure}
\hfill
\begin{subfigure}[t]{0.32\textwidth}
    \centering
    \includegraphics[width=\linewidth]{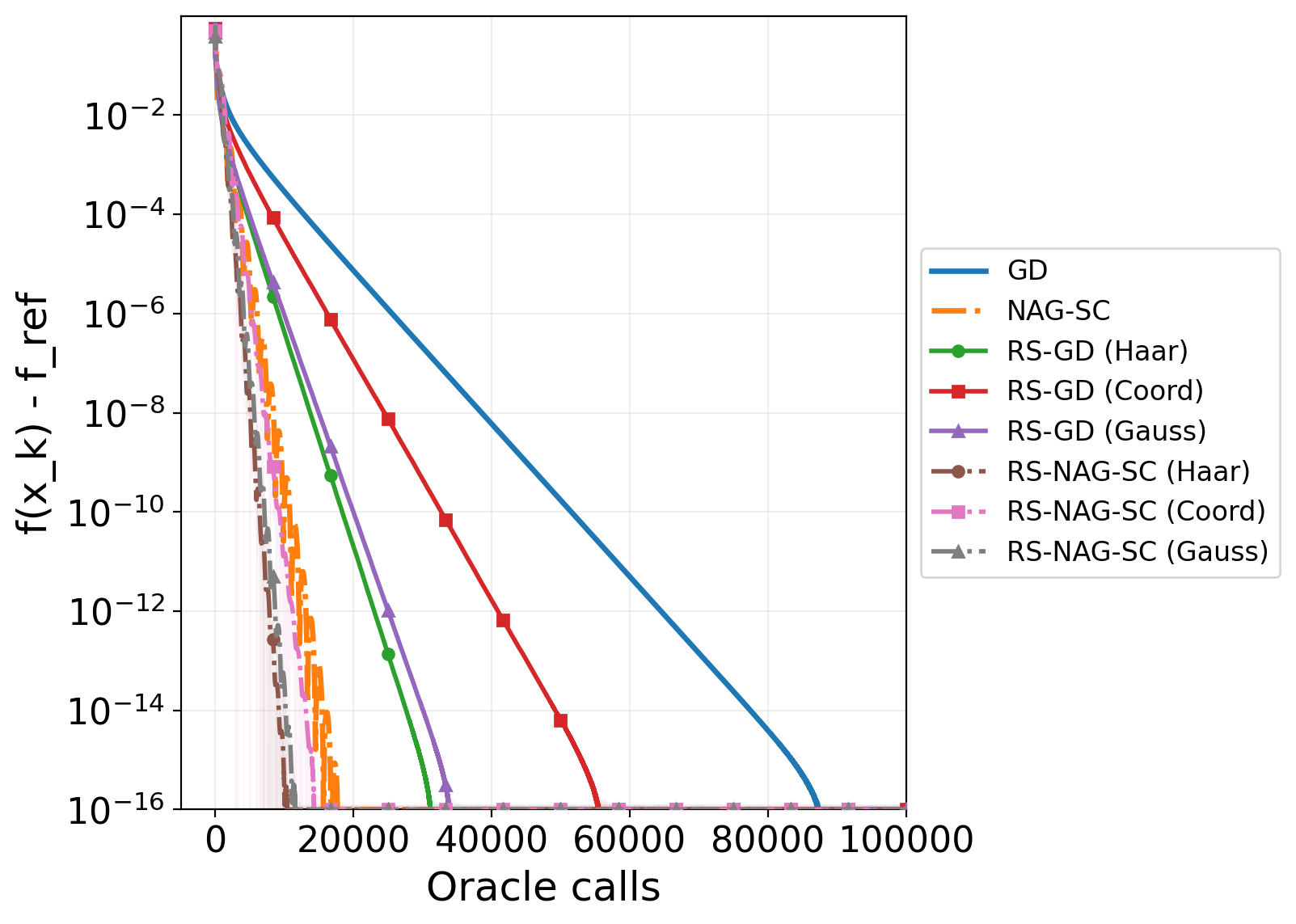}
    \caption{\text{ijcnn1}}
\end{subfigure}
\hfill
\begin{subfigure}[t]{0.32\textwidth}
    \centering
    \includegraphics[width=\linewidth]{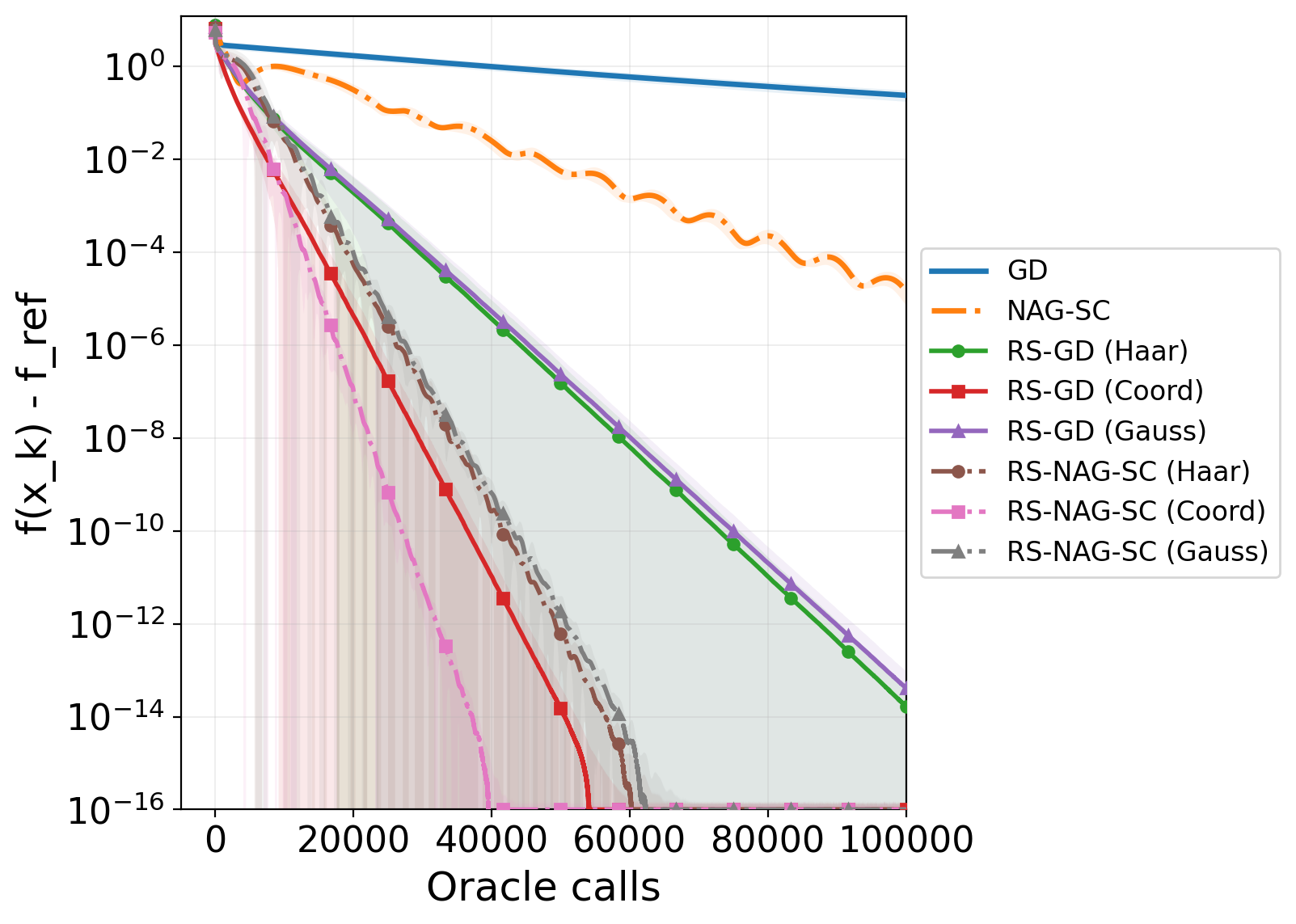}
    \caption{\text{splice}}
\end{subfigure}

\caption{
Oracle-axis comparison for $\ell_2$-regularized logistic regression on six real-world datasets.
The horizontal axis shows oracle calls, and the vertical axis shows $f(x_k)-f_{\mathrm{ref}}$ on a logarithmic scale, where $f_{\mathrm{ref}}$ is a reference value computed by L-BFGS-B.
We compare GD, NAG-SC, RS-GD, and RS-NAG-SC with Haar, coordinate, and Gaussian sketches.
Each plotted curve is the mean over $10$ random seeds, and the shaded region shows one standard deviation.
For each dataset, $\mu=1/n$, and the oracle budget is $100{,}000$
}
\label{fig:logreg-real-gap}
\end{figure*}

\begin{table}[t]
\centering
\caption{Dataset-dependent quantities for the logistic-regression datasets.
Here $d$ is the ambient dimension, $n$ is the number of training samples, and
$Q_{\mathrm H}, Q_{\mathrm G}, Q_{\mathrm C}$ denote the $r=1$ constants defined in Proposition~\ref{prop:comp-sketch-basic} for the Haar, Gaussian, and Coordinate sketches, respectively.}
\label{tab:logreg-q-values}
\small
\setlength{\tabcolsep}{4.5pt}
\renewcommand{\arraystretch}{1.1}
\begin{tabular}{@{}lrrrrrrr@{}}
\toprule
Dataset & $d$ & $Q_{\mathrm H}$ & $Q_{\mathrm G}$ & $Q_{\mathrm C}$ & $r_{\mathrm{eff}}$ & $\delta_{\mathrm{diag}}$ & $n$ \\
\midrule
phishing & 68  & 15.3670 & 15.8190 & 15.1421 & 1.5749 & 0.0496 & 11055 \\
a9a      & 123 & 22.5681 & 22.9350 & 47.8971 & 2.2081 & 0.1516 & 32561 \\
w8a      & 300 & 43.7200 & 44.0115 & 109.3820 & 4.4139 & 0.1329 & 49749 \\
mushroom & 112 & 21.0717 & 21.4480 & 34.8299 & 2.0352 & 0.0967 & 8124 \\
ijcnn1   & 22  & 13.1196 & 14.3123 & 17.5017 & 6.5350 & 0.6329 & 49990 \\
splice   & 60  & 13.5925 & 14.0456 & 9.5503  & 1.1819 & 0.0253 & 1000 \\
\bottomrule
\end{tabular}
\end{table}

\paragraph{Experimental setup.}
We evaluate the strongly convex logistic-regression setting on six real-world
binary-classification benchmarks:
\texttt{phishing}, \texttt{a9a}, \texttt{w8a},
\texttt{mushroom}, \texttt{ijcnn1}, and \texttt{splice}.
The UCI-derived datasets are cited collectively through the UCI Machine
Learning Repository~\citep{uci_repository}; for \texttt{w8a} and
\texttt{ijcnn1}, we follow the standard benchmark attributions
\citep{platt1998smo,prokhorov2001ijcnn}.

For each dataset, we use the $\ell_2$-regularized logistic objective in \eqref{eq:logreg-obj}, with the regularization parameter set to $\mu=1/n$, where $n$ is the number of training samples.
We compare the full-gradient methods GD and NAG-SC with the randomized-subspace methods RS-GD and RS-NAG-SC.
For the randomized-subspace methods, we use three sketch families: Haar, coordinate, and Gaussian.
We use $10$ random seeds, and initialize each run from a Gaussian random vector.
For logistic regression, we use
\[
\mathbb{L}=\frac{1}{4n}A^\top A+\mu I_d,
\]
and set $L=\|\mathbb{L}\|$, computed numerically as the largest eigenvalue of $\mathbb{L}$.
In the randomized-subspace methods, we use the $r=1$ setting throughout, in accordance with the theoretical comparison developed above.

\paragraph{Dataset-dependent quantities.}
For each dataset, we also compute the matrix-smoothness-derived quantities
$r_{\mathrm{eff}}$ and $\delta_{\mathrm{diag}}$, together with the corresponding $r=1$ constants
$Q_{\mathrm H}$, $Q_{\mathrm G}$, and $Q_{\mathrm C}$ for the Haar, Gaussian, and coordinate sketches, respectively, as defined in \Cref{prop:comp-sketch-basic}.
These values are reported in \Cref{tab:logreg-q-values}.

\paragraph{Discussion.}
We observe that RS-NAG-SC consistently achieves strong performance across all six datasets.
Moreover, the relative convergence behavior among the Haar, Gaussian, and Coordinate sketches is broadly consistent with the dataset-dependent $Q$ values reported in \Cref{tab:logreg-q-values}.
Indeed, on datasets such as \text{a9a}, \text{w8a}, \text{mushroom}, and \text{ijcnn1}, where $Q_{\mathrm H}$ and $Q_{\mathrm G}$ are relatively small, the Haar and Gaussian variants tend to converge faster than the Coordinate variant and also faster than NAG-SC.
On the other hand, for \text{phishing}, where the Coordinate constant is also relatively small, the Coordinate variant performs comparably well to the Haar and Gaussian variants.
For \text{splice}, where the Coordinate constant is particularly small, the Coordinate variant performs better.
Overall, these results suggest that the $Q$ values can serve as a useful practical guide when choosing the sketch distribution before running the method.

\subsection{Experimental resources, dataset sources, and terms of use}

\paragraph{Computational resources.}
All reported logistic-regression experiments were run on a single NVIDIA RTX
A5000 GPU with 24564 MiB of memory and CUDA support. The machine had an AMD
EPYC 7413 CPU and 503 GiB of system memory. The software environment used
Python 3.10.12, NumPy 2.2.6, SciPy 1.15.3, scikit-learn 1.7.2, Matplotlib
3.10.8, and PyTorch 2.11.0 with CUDA support. The main and additional logistic-regression experiments each took about
70 hours on this machine. 
Thus, the reported logistic-regression
experiments required about 140 GPU-hours in total.
The reported quadratic experiments were run separately on a local personal
computer using CPU only. The local machine had an Apple M1 processor and 8 GB
of memory. The main and appendix quadratic experiments each completed within
one wall-clock hour in our runs.

\paragraph{Dataset sources and terms of use.}
We use only previously released public benchmark datasets and do not redistribute
any dataset. The datasets \texttt{hiva\_agnostic} and \texttt{bioresponse} were
obtained from OpenML \citep{vanschoren2013openml}
with dataset IDs 1039 and 46912, respectively; their
OpenML license fields are listed as \texttt{Public} and \texttt{Public Domain},
respectively. 

The remaining datasets were obtained from the LIBSVM binary-classification dataset page \citep{chang2011libsvm}, which
provides LIBSVM-formatted versions of datasets from existing public benchmark collections and
lists source and preprocessing information for each dataset. We cite the corresponding original source papers or dataset records whenever
available. We checked the license or terms of use stated on the corresponding
source or access pages whenever available. We use the datasets only for research
evaluation and do not redistribute any dataset.

\section{Oracle complexity of the basic randomized-subspace gradient method}
\label{app:kozak-rates}

In \Cref{tab:oracle-comparison-intro}, the row labeled ``RS-GD (Kozak et al.)''
refers to the basic randomized-subspace gradient iteration
\begin{equation}
x_{k+1}=x_k-\eta\,P_kP_k^\top \nabla f(x_k),
\qquad k\ge 0,
\label{eq:kozak-basic-iter}
\end{equation}
introduced by \citet{kozak2021}.
The exact oracle-complexity expressions shown in
\Cref{tab:oracle-comparison-intro} are not stated in this form in
\citet{kozak2021}; for convenience, we record below short derivations.

\begin{proposition}[Convex rate for the basic randomized-subspace gradient method]
\label{prop:kozak-convex}
Suppose
\Cref{ass:matrix-smooth-common,ass:sketch-common,ass:func-convex} hold,
with \(\ell\) chosen so that \(\ell\le\omega\), as assumed throughout the paper.
Let \(\{x_k\}_{k\ge0}\) be generated by
\eqref{eq:kozak-basic-iter} with the constant step-size
\[
\eta=\frac{1}{2\omega L}.
\]

Then the expected objective values are nonincreasing:
\[
\E[f(x_{k+1})]\le \E[f(x_k)]
\qquad\text{for all }k\ge 0.
\]
Moreover, for every \(N\ge 1\),
\begin{equation}
\E[f(x_N)-f^\star]
\;\le\;
\frac{2\omega L\,\norm{x_0-x^\star}^2}{N}.
\label{eq:kozak-convex-rate}
\end{equation}
Consequently, defining \(R_0\coloneqq \norm{x_0-x^\star}\), it suffices to take
\[
N\ge \frac{2\omega L R_0^2}{\epsilon}
\]
to guarantee \(\E[f(x_N)-f^\star]\le \epsilon\). Since each iteration uses
\(r\) oracle calls, the oracle complexity is
\begin{equation}
\#\mathrm{Oracle}
=
\mathcal{O}\!\left(
r\omega R_0^2 \frac{L}{\epsilon}
\right).
\label{eq:kozak-convex-oracle}
\end{equation}
\end{proposition}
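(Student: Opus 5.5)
We would follow the standard one-step analysis of stochastic gradient descent with an unbiased sketched gradient, using the scalar descent lemma implied by matrix smoothness.

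Fix $k$, let $\F_k=\sigma(P_0,\dots,P_{k-1})$ and $g_k=\nabla f(x_k)$. Applying \eqref{eq:matrix-smooth-common} with $y=x_{k+1}$ gives
\[
f(x_{k+1})\le f(x_k)-\eta\,g_k^\top P_kP_k^\top g_k+\tfrac{\eta^2}{2}g_k^\top P_kP_k^\top\Lmat P_kP_k^\top g_k .
\]
We then take conditional expectations. By \eqref{eq:unbiased-common} and \eqref{eq:Lsmooth-sketch-common}, together with the convention $\ell\le\omega$, we get
\[
\E[f(x_{k+1})\mid\F_k]\le f(x_k)-\eta\Bigl(1-\tfrac{\eta\omega L}{2}\Bigr)\norm{g_k}^2=f(x_k)-\tfrac{3\eta}{4}\norm{g_k}^2 .
\]
Taking total expectation yields the monotonicity claim.

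Next we track the distance to $x^\star$. Expanding the update gives
\[
\norm{x_{k+1}-x^\star}^2=\norm{x_k-x^\star}^2-2\eta\ip{P_kP_k^\top g_k}{x_k-x^\star}+\eta^2\norm{P_kP_k^\top g_k}^2 .
\]
Using \eqref{eq:unbiased-common}, \eqref{eq:second-moment-common} and convexity, $\ip{g_k}{x_k-x^\star}\ge f(x_k)-f^\star$, we obtain
\[
\E[\norm{x_{k+1}-x^\star}^2\mid\F_k]\le\norm{x_k-x^\star}^2-2\eta(f(x_k)-f^\star)+\eta^2\omega\norm{g_k}^2 .
\]
The $\norm{g_k}^2$ term is absorbed with the descent inequality. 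Since $\eta^2\omega=\eta/(2L)$ and $\tfrac{3\eta}{4}\ge\eta^2\omega$, we have $\eta^2\omega\norm{g_k}^2\le f(x_k)-\E[f(x_{k+1})\mid\F_k]$. Adding gives
\[
\E\bigl[\norm{x_{k+1}-x^\star}^2+(f(x_{k+1})-f^\star)\bigr]\le\E\norm{x_k-x^\star}^2+\E(f(x_k)-f^\star)-2\eta\,\E(f(x_k)-f^\star).
\]

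Summing this from $k=0$ to $N-1$ telescopes to
\[
2\eta\sum_{k<N}\E[f(x_k)-f^\star]\le R_0^2+(f(x_0)-f^\star).
\]
Monotonicity then gives $2\eta N\,\E[f(x_N)-f^\star]\le R_0^2+f(x_0)-f^\star$.

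The leftover $f(x_0)-f^\star$ is the main obstacle. The crude bound $f(x_0)-f^\star\le LR_0^2/2$ only gives $\E[f(x_N)-f^\star]\le 4\omega L R_0^2/N$, so this naive route loses a constant factor of $2$ against the claimed $2\omega L$. To sharpen it, we would instead use the weighted potential $\norm{x_k-x^\star}^2+c\,(f(x_k)-f^\star)$ and choose $c$ to exactly cancel the gradient-norm term. Alternatively, we would run the sum only over $k=1,\dots,N$, starting from the bound after one step. If the constant still does not close exactly, the oracle-complexity statement \eqref{eq:kozak-convex-oracle} is unaffected, since it is stated up to $\mathcal{O}$. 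The final complexity follows by solving $2\omega LR_0^2/N\le\epsilon$ and multiplying by $r$.
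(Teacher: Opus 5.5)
Your monotonicity argument and your distance recursion match the paper. The step that absorbs $\eta^2\omega\norm{g_k}^2$ fails, however.

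You have $\eta^2\omega=\eta/(2L)$, while the descent inequality only gives $\tfrac{3\eta}{4}\norm{g_k}^2\le f(x_k)-\E[f(x_{k+1})\mid\F_k]$. So the claim $\eta^2\omega\norm{g_k}^2\le f(x_k)-\E[f(x_{k+1})\mid\F_k]$ would require $\tfrac{1}{2L}\le\tfrac34$, i.e.\ $L\ge 2/3$. That condition is not scale invariant: the two sides carry different units, squared distance versus function value.

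Hence the unweighted potential $\norm{x_k-x^\star}^2+(f(x_k)-f^\star)$ does not decrease in general. The telescoped bound with the leftover $f(x_0)-f^\star$ is therefore not justified, and neither is the ``$4\omega L R_0^2/N$'' figure derived from it. Even formally, $R_0^2+LR_0^2/2$ divided by $2\eta N$ gives $\omega L(1+L/2)R_0^2/N$, not $4\omega L R_0^2/N$.

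You then stop without closing the argument, and the fallback to the $\mathcal{O}$-statement is not enough. The proposition asserts the explicit rate \eqref{eq:kozak-convex-rate} and the explicit sufficient iteration count $N\ge 2\omega L R_0^2/\epsilon$.

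The missing idea, which the paper uses, is to bound the gradient term directly inside the distance recursion. Use the smooth-convex inequality $\norm{\nabla f(x)}^2\le 2L(f(x)-f^\star)$, which follows from $f^\star\le f(x-\nabla f(x)/L)\le f(x)-\norm{\nabla f(x)}^2/(2L)$. This gives
\[
\E\norm{x_{k+1}-x^\star}^2\le \E\norm{x_k-x^\star}^2-\bigl(2\eta-2\eta^2\omega L\bigr)\E[f(x_k)-f^\star],
\]
with $2\eta-2\eta^2\omega L=1/(2\omega L)$ at $\eta=1/(2\omega L)$. Summing and using monotonicity yields $N\,\E[f(x_N)-f^\star]\le 2\omega L R_0^2$, with no initial-gap term at all.

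Your weighted-potential idea would also work if the weight has the right units. Take $V_k=\norm{x_k-x^\star}^2+\tfrac{2}{3L}(f(x_k)-f^\star)$. Then:
\begin{itemize}
\item the gradient terms cancel exactly, so $\E[V_{k+1}\mid\F_k]\le V_k-2\eta(f(x_k)-f^\star)$;
\item $V_0\le\tfrac43R_0^2$, using $f(x_0)-f^\star\le\tfrac L2R_0^2$;
\item summing and using monotonicity gives $\E[f(x_N)-f^\star]\le\tfrac{4\omega LR_0^2}{3N}$, which implies the stated bound.
\end{itemize}
As written, neither route is carried out, so the proposal does not prove the proposition.
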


\begin{proof}
Let
\[
g_k\coloneqq \nabla f(x_k).
\]
Since \(\Lmat\succeq 0\) and \(L=\norm{\Lmat}\), we have \(\Lmat\preceq LI_d\).
Therefore \Cref{ass:matrix-smooth-common} implies the standard \(L\)-smoothness
inequality
\[
f(y)\le f(x)+\ip{\nabla f(x)}{y-x}+\frac{L}{2}\norm{y-x}^2
\qquad\text{for all }x,y\in\R^d.
\]
In particular, since \(f\) is convex and attains its minimum at \(x^\star\),
the standard smooth-convex inequality gives
\begin{equation}
\norm{\nabla f(x)}^2 \le 2L\bigl(f(x)-f^\star\bigr)
\qquad\text{for all }x\in\R^d.
\label{eq:grad-bound-convex-kozak}
\end{equation}

We first show monotonicity of the expected objective.
Applying \Cref{ass:matrix-smooth-common} with
\(x=x_k\) and \(y=x_{k+1}=x_k-\eta P_kP_k^\top g_k\), we obtain
\[
f(x_{k+1})
\le
f(x_k)-\eta\, g_k^\top P_kP_k^\top g_k
+\frac{\eta^2}{2}\, g_k^\top P_kP_k^\top \Lmat P_kP_k^\top g_k.
\]
Taking conditional expectation with respect to \(\F_k\), and using
\[
\E[P_kP_k^\top\mid \F_k]=I_d,
\qquad
\E[P_kP_k^\top \Lmat P_kP_k^\top\mid \F_k]\preceq \ell L I_d,
\]
we obtain
\[
\E[f(x_{k+1})\mid \F_k]
\le
f(x_k)-\eta\norm{g_k}^2+\frac{\eta^2\ell L}{2}\norm{g_k}^2
=
f(x_k)-\eta\Bigl(1-\frac{\eta\ell L}{2}\Bigr)\norm{g_k}^2.
\]
With \(\eta=1/(2\omega L)\) and \(\ell\le \omega\), we have
\[
\eta=\frac{1}{2\omega L}\le \frac{1}{2\ell L},
\]
and hence
\[
1-\frac{\eta\ell L}{2}\ge 0.
\]
Therefore,
\[
\E[f(x_{k+1})\mid \F_k]\le f(x_k).
\]

Next, expanding the squared distance to \(x^\star\), we obtain
\[
\norm{x_{k+1}-x^\star}^2
=
\norm{x_k-x^\star}^2
-2\eta\,\ip{x_k-x^\star}{P_kP_k^\top g_k}
+\eta^2 g_k^\top (P_kP_k^\top)^2 g_k.
\]
Taking conditional expectation and using
\[
\E[P_kP_k^\top\mid \F_k]=I_d,
\qquad
\E[(P_kP_k^\top)^2\mid \F_k]\preceq \omega I_d,
\]
we get
\[
\E[\norm{x_{k+1}-x^\star}^2\mid \F_k]
\le
\norm{x_k-x^\star}^2
-2\eta\,\ip{x_k-x^\star}{g_k}
+\eta^2\omega \norm{g_k}^2.
\]
By convexity,
\[
\ip{x_k-x^\star}{g_k}\ge f(x_k)-f^\star,
\]
and by \eqref{eq:grad-bound-convex-kozak},
\[
\norm{g_k}^2 \le 2L(f(x_k)-f^\star).
\]
Therefore
\[
\E[\norm{x_{k+1}-x^\star}^2\mid \F_k]
\le
\norm{x_k-x^\star}^2
-\bigl(2\eta-2\eta^2\omega L\bigr)\bigl(f(x_k)-f^\star\bigr).
\]
Taking expectation and using the tower property yields
\[
\E[\norm{x_{k+1}-x^\star}^2]
\le
\E[\norm{x_k-x^\star}^2]
-\bigl(2\eta-2\eta^2\omega L\bigr)\E[f(x_k)-f^\star].
\]
Summing for \(k=0,\dots,N-1\), we obtain
\[
\bigl(2\eta-2\eta^2\omega L\bigr)\sum_{k=0}^{N-1}\E[f(x_k)-f^\star]
\le
\norm{x_0-x^\star}^2.
\]
Since \(\E[f(x_k)]\) is nonincreasing,
\[
N\,\E[f(x_N)-f^\star]
\le
\sum_{k=0}^{N-1}\E[f(x_k)-f^\star]
\le
\frac{\norm{x_0-x^\star}^2}{2\eta-2\eta^2\omega L}.
\]
Using \(\eta=1/(2\omega L)\), we have
\[
2\eta-2\eta^2\omega L=\frac{1}{2\omega L}.
\]
Therefore
\[
\E[f(x_N)-f^\star]
\le
\frac{2\omega L\,\norm{x_0-x^\star}^2}{N},
\]
which proves \eqref{eq:kozak-convex-rate}.
The oracle bound
\eqref{eq:kozak-convex-oracle} follows immediately because each iteration uses
\(r\) oracle calls.
\end{proof}

\begin{proposition}[Strongly convex rate for the basic randomized-subspace gradient method]
\label{prop:kozak-strong}
Suppose
\Cref{ass:matrix-smooth-common,ass:sketch-common,ass:func} hold,
and let \(\{x_k\}_{k\ge0}\) be generated by
\eqref{eq:kozak-basic-iter} with the constant step-size
\[
\eta=\frac{1}{\ell L}.
\]
Then, for every \(k\ge 0\),
\begin{equation}
\E[f(x_{k+1})-f^\star]
\le
\left(1-\frac{\mu}{\ell L}\right)\E[f(x_k)-f^\star].
\label{eq:kozak-strong-onestep}
\end{equation}
Hence, for every \(N\ge 0\),
\begin{equation}
\E[f(x_N)-f^\star]
\le
\left(1-\frac{\mu}{\ell L}\right)^N \bigl(f(x_0)-f^\star\bigr)
\le
\exp\!\left(-\frac{\mu N}{\ell L}\right)\bigl(f(x_0)-f^\star\bigr).
\label{eq:kozak-strong-rate}
\end{equation}
Consequently, defining \(\Delta_0\coloneqq f(x_0)-f^\star\), it suffices to take
\[
N\ge \frac{\ell L}{\mu}\log\!{\frac{\Delta_0}{\epsilon}}
\]
to guarantee \(\E[f(x_N)-f^\star]\le \epsilon\). Since each iteration uses
\(r\) oracle calls, the oracle complexity is
\begin{equation}
\#\mathrm{Oracle}
=
\mathcal{O}\!\left(
r\ell \frac{L}{\mu}\log\frac{\Delta_0}{\epsilon}
\right).
\label{eq:kozak-strong-oracle}
\end{equation}
\end{proposition}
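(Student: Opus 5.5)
The plan is to prove the one-step contraction \eqref{eq:kozak-strong-onestep} in conditional form and then iterate it. Set \(g_k\coloneqq\nabla f(x_k)\) and \(\F_k\coloneqq\sigma(P_0,\dots,P_{k-1})\), so that \(x_k\) is \(\F_k\)-measurable and \(P_k\) is independent of \(\F_k\).

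\textbf{Descent step.} Apply \Cref{ass:matrix-smooth-common} with \(x=x_k\) and \(y=x_{k+1}=x_k-\eta P_kP_k^\top g_k\). This gives
\[
f(x_{k+1})\le f(x_k)-\eta\, g_k^\top P_kP_k^\top g_k+\frac{\eta^2}{2}\,g_k^\top P_kP_k^\top\Lmat P_kP_k^\top g_k .
\]
Take the conditional expectation given \(\F_k\). Using \eqref{eq:unbiased-common} and \eqref{eq:Lsmooth-sketch-common}, exactly as in the descent computation of the proof of \Cref{thm:rs-nesterov-strong}, we get
\[
\E[f(x_{k+1})\mid\F_k]\le f(x_k)-\eta\Bigl(1-\frac{\eta\ell L}{2}\Bigr)\norm{g_k}^2 .
\]
With \(\eta=1/(\ell L)\) this becomes
\[
\E[f(x_{k+1})\mid\F_k]\le f(x_k)-\frac{1}{2\ell L}\norm{g_k}^2 .
\]
This is the only place the sketch enters. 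Only \(\ell\) appears, not \(\omega\), because there is no distance term in this argument.

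\textbf{Contraction.} Next use the Polyak--Łojasiewicz inequality implied by \(\mu\)-strong convexity (\Cref{ass:func}):
\[
\norm{\nabla f(x)}^2\ge 2\mu\bigl(f(x)-f^\star\bigr).
\]
To justify it, minimize the right-hand side of the strong convexity lower bound \(f(y)\ge f(x)+\ip{\nabla f(x)}{y-x}+\frac{\mu}{2}\norm{y-x}^2\) over \(y\). This yields \(f^\star\ge f(x)-\frac{1}{2\mu}\norm{\nabla f(x)}^2\), using that a minimizer exists under strong convexity. Substituting into the descent step and subtracting \(f^\star\) gives
\[
\E[f(x_{k+1})-f^\star\mid\F_k]\le\Bigl(1-\frac{\mu}{\ell L}\Bigr)\bigl(f(x_k)-f^\star\bigr).
\]
Taking total expectation via the tower property (all quantities are nonnegative) gives \eqref{eq:kozak-strong-onestep}.

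\textbf{Iteration and complexity.} The contraction factor must be checked to lie in \([0,1)\). As in the proof of \Cref{thm:rs-nesterov-strong}, \(\Lmat\preceq LI_d\) together with strong convexity gives \(\mu\le L\), and \Cref{prop:ell-omega-redundant} gives \(\ell\ge1\). Hence \(\mu/(\ell L)\in(0,1]\), and the factor is nonnegative. Iterating \(N\) times yields the first inequality in \eqref{eq:kozak-strong-rate}. The bound \(1-t\le e^{-t}\) yields the second. Solving \(\exp(-\mu N/(\ell L))\Delta_0\le\epsilon\) for \(N\) gives the iteration count \(N\ge\frac{\ell L}{\mu}\log\frac{\Delta_0}{\epsilon}\). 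Multiplying by the \(r\) oracle calls per iteration gives \eqref{eq:kozak-strong-oracle}.

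There is no genuine obstacle in this argument. The only care points are the choice \(\eta=1/(\ell L)\), which exactly maximizes \(\eta(1-\eta\ell L/2)\), and the range check on \(\mu/(\ell L)\).
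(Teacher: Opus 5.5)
Your proposal is correct and follows essentially the same route as the paper's proof: the matrix-smoothness descent bound in conditional expectation at \(\eta=1/(\ell L)\) giving a decrease of \(\frac{1}{2\ell L}\norm{\nabla f(x_k)}^2\), then the strong-convexity gradient inequality \(\norm{\nabla f(x)}^2\ge 2\mu(f(x)-f^\star)\), the tower property, iteration, and \(1-t\le e^{-t}\). The paper leaves implicit that the contraction factor is nonnegative, and your explicit check that \(\mu/(\ell L)\in(0,1]\), via \(\mu\le L\) and \(\ell\ge 1\), is a small but welcome addition.
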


\begin{proof}
Let
\[
g_k\coloneqq \nabla f(x_k).
\]
Applying \Cref{ass:matrix-smooth-common} with
\(x=x_k\) and \(y=x_{k+1}=x_k-\eta P_kP_k^\top g_k\), we obtain
\[
f(x_{k+1})
\le
f(x_k)-\eta\, g_k^\top P_kP_k^\top g_k
+\frac{\eta^2}{2}\, g_k^\top P_kP_k^\top \Lmat P_kP_k^\top g_k.
\]
Taking conditional expectation with respect to \(\F_k\), and using
\[
\E[P_kP_k^\top\mid \F_k]=I_d,
\qquad
\E[P_kP_k^\top \Lmat P_kP_k^\top\mid \F_k]\preceq \ell L I_d,
\]
we obtain
\[
\E[f(x_{k+1})\mid \F_k]
\le
f(x_k)-\eta\norm{g_k}^2+\frac{\eta^2\ell L}{2}\norm{g_k}^2
=
f(x_k)-\eta\Bigl(1-\frac{\eta\ell L}{2}\Bigr)\norm{g_k}^2.
\]
With \(\eta=1/(\ell L)\), this yields
\[
\E[f(x_{k+1})\mid \F_k]
\le
f(x_k)-\frac{1}{2\ell L}\norm{g_k}^2.
\]
Since \(f\) is differentiable and \(\mu\)-strongly convex,
\[
\norm{\nabla f(x)}^2 \ge 2\mu\bigl(f(x)-f^\star\bigr)
\qquad\text{for all }x\in\R^d,
\]
we obtain
\[
\E[f(x_{k+1})-f^\star\mid \F_k]
\le
f(x_k)-f^\star-\frac{\mu}{\ell L}\bigl(f(x_k)-f^\star\bigr)
=
\left(1-\frac{\mu}{\ell L}\right)\bigl(f(x_k)-f^\star\bigr).
\]
Taking expectation proves \eqref{eq:kozak-strong-onestep}, and
\eqref{eq:kozak-strong-rate} follows by iteration. The oracle bound
\eqref{eq:kozak-strong-oracle} is immediate because each iteration uses
\(r\) oracle calls.
\end{proof}

\section{High-probability and almost-sure guarantees}
\label{app:hp-as}

We now record two consequences of the supermartingale Lyapunov bounds: one in the convex case and one in the strongly convex case.

In the convex case, the maximal inequality yields a uniform high-probability bound, and a dyadic Borel--Cantelli argument yields an almost sure eventual rate.
In the strongly convex case, after normalization by the linear contraction factor, the Lyapunov process is again a nonnegative supermartingale, and the almost sure argument becomes simpler: no dyadic reduction is needed.

\subsection{Convex case}

Recall the convex Lyapunov process
\[
\Phi_k^{\mathrm C}
\;\coloneqq\;
A_k\bigl(f(x_k)-f^\star\bigr)
+\frac12\|z_k-x^\star\|^2.
\]

\begin{proposition}[Convex case: high-probability and almost sure convergence]
\label{prop:convex-as}
Suppose \Cref{ass:matrix-smooth-common,ass:sketch-common,ass:func-convex} hold, and let
\(\{x_k,z_k,A_k\}_{k\ge 0}\) be generated by \Cref{alg:rs-NAG-c}.
Then, by \eqref{eq:conditional-psi-contraction}, the process
\((\Phi_k^{\mathrm C})_{k\ge 0}\) is a nonnegative supermartingale:
\[
\E[\Phi_{k+1}^{\mathrm C}\mid \mathcal F_k]\le \Phi_k^{\mathrm C}
\qquad\text{for all }k\ge 0.
\]
Consequently, the following hold.

\begin{enumerate}
\item[(i)] For every \(\eta\in(0,1)\), with probability at least \(1-\eta\),
\begin{equation}
\label{eq:convex-hp-AN}
A_k\bigl(f(x_k)-f^\star\bigr)\le \frac{\Phi_0^{\mathrm C}}{\eta}
\qquad\text{for all }k\ge 0.
\end{equation}
Consequently, since \(A_k>0\) for all \(k\ge 1\),
\begin{equation}
\label{eq:convex-hp-gap}
f(x_k)-f^\star\le \frac{\Phi_0^{\mathrm C}}{\eta\,A_k}
\qquad\text{for all }k\ge 1.
\end{equation}

\item[(ii)] Fix any \(\varepsilon>0\).
Then, almost surely, there exists a finite random integer \(K_\varepsilon\) such that for all integers \(k\ge K_\varepsilon\),
\begin{equation}
\label{eq:convex-as-AN}
A_k\bigl(f(x_k)-f^\star\bigr)
\le
\Phi_0^{\mathrm C}\,
\Bigl\lceil \log_2 k \Bigr\rceil
\Bigl(\log \bigl\lceil \log_2 k \bigr\rceil \Bigr)^{1+\varepsilon}.
\end{equation}

Consequently, for all \(k\ge K_\varepsilon\),
\begin{equation}
\label{eq:convex-as-gap}
f(x_k)-f^\star
\le
\frac{
\Phi_0^{\mathrm C}\,
\bigl\lceil \log_2 k \bigr\rceil
\bigl(\log \lceil \log_2 k \rceil \bigr)^{1+\varepsilon}
}{A_k}.
\end{equation}

Moreover, for the sequence \((A_k)\) generated by \Cref{alg:rs-NAG-c}, one has
\begin{equation}
\label{eq:Ak-lower-convex}
A_k\ge \frac{m}{2\omega}k^2=\frac{k^2}{4L\ell\omega}
\qquad\text{for all }k\ge 0,
\end{equation}
and therefore, for all \(k\ge K_\varepsilon\),
\begin{equation}
\label{eq:convex-as-gap-k2}
f(x_k)-f^\star
\le
4L\ell\omega\,\Phi_0^{\mathrm C}\,
\frac{
\bigl\lceil \log_2 k \bigr\rceil
\bigl(\log \lceil \log_2 k \rceil \bigr)^{1+\varepsilon}
}{k^2}.
\end{equation}
\end{enumerate}
\end{proposition}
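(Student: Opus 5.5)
The plan is to rely entirely on the one-step conditional inequality \eqref{eq:conditional-psi-contraction} from the proof of \Cref{thm:rs-nesterov-convex}. That inequality states $\E[\Phi^{\mathrm C}_{k+1}\mid\F_k]\le\Phi^{\mathrm C}_k$. Since $\Phi^{\mathrm C}_k$ is $\F_k$-measurable, because $x_k,z_k$ depend only on $P_0,\dots,P_{k-1}$ and $A_k$ is deterministic, and it is nonnegative, the process is a nonnegative supermartingale. $\Phi^{\mathrm C}_0=\frac12\|x_0-x^\star\|^2$ is deterministic.

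For (i), I would apply Ville's maximal inequality for nonnegative supermartingales: $\Pr(\sup_k\Phi^{\mathrm C}_k\ge\Phi^{\mathrm C}_0/\eta)\le\eta$. On the complementary event, $A_k(f(x_k)-f^\star)\le\Phi^{\mathrm C}_k\le\Phi^{\mathrm C}_0/\eta$ holds for all $k$, because the distance term is nonnegative. Dividing by $A_k>0$ for $k\ge1$ gives \eqref{eq:convex-hp-gap}. Strict inequality versus non-strict inequality does not matter here.

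For (ii), I would use a dyadic Borel--Cantelli argument. Set $j=\lceil\log_2 k\rceil$ and consider the blocks $B_j=\{2^{j-1}<k\le 2^j\}$. Let $h(j)=j(\log j)^{1+\varepsilon}$ for $j\ge2$. One option is to apply the maximal inequality to the supermartingale restarted at time $2^{j-1}$, but its starting value is random. The simpler route is to apply Ville's inequality to the whole process:
\[
\Pr\bigl(\sup_{k\in B_j}\Phi^{\mathrm C}_k>\Phi^{\mathrm C}_0 h(j)\bigr)\le\Pr\bigl(\sup_{k\ge0}\Phi^{\mathrm C}_k>\Phi^{\mathrm C}_0 h(j)\bigr)\le 1/h(j).
\]
The trouble is that $\sum_j 1/(j(\log j)^{1+\varepsilon})$ converges, yet the events are nested, so Borel--Cantelli over $j$ applies directly. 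The sum over $j\ge2$ is finite, so almost surely only finitely many blocks fail. Beyond some random $J$, every $k$ in $B_j$ with $j\ge J$ satisfies $\Phi^{\mathrm C}_k\le\Phi^{\mathrm C}_0 h(\lceil\log_2 k\rceil)$. Setting $K_\varepsilon=2^{J}$ then gives \eqref{eq:convex-as-AN}, and dividing by $A_k$ gives \eqref{eq:convex-as-gap}.

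Since $\sup\Phi^{\mathrm C}_k$ is almost surely finite by supermartingale convergence, a cruder bound would even suffice. The dyadic bookkeeping is only needed to state the displayed rate. The case $\Phi^{\mathrm C}_0=0$ is trivial, because then $\Phi^{\mathrm C}_k=0$ almost surely.

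Finally, \eqref{eq:Ak-lower-convex} is exactly \eqref{eq:AN-lower}, $A_k\ge\frac{m}{2\omega}k^2$, already proved in \Cref{thm:rs-nesterov-convex}. With $m=1/(2L\ell)$ this equals $k^2/(4L\ell\omega)$, and it holds trivially at $k=0$. Substituting it into \eqref{eq:convex-as-gap} yields \eqref{eq:convex-as-gap-k2}.

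The only delicate step is the measure-theoretic one: justifying $\F_k$-adaptedness and integrability so that Ville's inequality applies. Integrability follows from $\E\Phi^{\mathrm C}_k\le\Phi^{\mathrm C}_0$, which comes from the tower property as used in \Cref{thm:rs-nesterov-convex}.
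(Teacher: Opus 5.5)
Your proposal is correct and follows essentially the same route as the paper: Ville's maximal inequality for (i), a dyadic Borel--Cantelli argument with thresholds $\Phi_0^{\mathrm C}\,m(\log m)^{1+\varepsilon}$ for (ii), and \eqref{eq:AN-lower} for the lower bound on $A_k$. The paper's events use prefix maxima $\max_{0\le j\le 2^m}\Phi_j^{\mathrm C}$ on a finite horizon instead of your blocks, which changes nothing, and your side remarks are also right: Borel--Cantelli needs only summability (nestedness plays no role), and since $\sup_k\Phi_k^{\mathrm C}<\infty$ almost surely, (ii) in fact holds with any threshold sequence tending to infinity.
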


\begin{proof}
We first prove part (i).
Since \((\Phi_k^{\mathrm C})\) is a nonnegative supermartingale, Ville's maximal inequality gives, for every \(b>0\),
\[
\mathbb P\!\left(\sup_{j\ge 0}\Phi_j^{\mathrm C}\ge b\right)\le \frac{\Phi_0^{\mathrm C}}{b}.
\]
Taking \(b=\Phi_0^{\mathrm C}/\eta\), we obtain
\[
\mathbb P\!\left(\sup_{j\ge 0}\Phi_j^{\mathrm C}\le \frac{\Phi_0^{\mathrm C}}{\eta}\right)\ge 1-\eta.
\]
Because
\[
A_k\bigl(f(x_k)-f^\star\bigr)\le \Phi_k^{\mathrm C}
\qquad\text{a.s. for every }k\ge 0,
\]
this proves \eqref{eq:convex-hp-AN}. Since \(A_k>0\) for all \(k\ge 1\),
\eqref{eq:convex-hp-gap} follows by dividing by \(A_k\).

We now prove part (ii).
Fix \(\varepsilon>0\), and for every integer \(m\ge 2\), define
\[
N_m\coloneqq 2^m,
\qquad
b_m\coloneqq \Phi_0^{\mathrm C}\,m(\log m)^{1+\varepsilon},
\]
and the bad events
\[
E_m
\coloneqq
\left\{
\max_{0\le j\le 2^m}\Phi_j^{\mathrm C}>b_m
\right\}.
\]
Applying Ville's maximal inequality on the finite horizon \(\{0,\dots,2^m\}\) gives
\[
\mathbb P(E_m)\le \frac{\Phi_0^{\mathrm C}}{b_m}
=
\frac{1}{m(\log m)^{1+\varepsilon}}.
\]
Since
\[
\sum_{m=2}^{\infty}\frac{1}{m(\log m)^{1+\varepsilon}}<\infty,
\]
the Borel--Cantelli lemma implies
\[
\mathbb P(E_m\ \mathrm{i.o.})=0.
\]
Therefore, almost surely, there exists a finite random integer
\(m_\varepsilon\ge 2\) such that for all \(m\ge m_\varepsilon\),
\begin{equation}
\label{eq:convex-dyadic-control}
\max_{0\le j\le 2^m}\Phi_j^{\mathrm C}
\le
\Phi_0^{\mathrm C}\,m(\log m)^{1+\varepsilon}.
\end{equation}

Now let
\[
K_\varepsilon\coloneqq 2^{m_\varepsilon}.
\]
Fix any integer \(k\ge K_\varepsilon\), and set
\[
m\coloneqq \left\lceil \log_2 k \right\rceil.
\]
Then \(m\ge m_\varepsilon\) and \(k\le 2^m\). Hence, by \eqref{eq:convex-dyadic-control},
\[
A_k\bigl(f(x_k)-f^\star\bigr)
\le
\Phi_k^{\mathrm C}
\le
\max_{0\le j\le 2^m}\Phi_j^{\mathrm C}
\le
\Phi_0^{\mathrm C}\,m(\log m)^{1+\varepsilon}.
\]
Since \(m=\lceil \log_2 k\rceil\), this proves \eqref{eq:convex-as-AN}. Dividing by \(A_k\) gives \eqref{eq:convex-as-gap}.

The lower bound \eqref{eq:Ak-lower-convex} follows directly from
\eqref{eq:AN-lower}. Finally, substituting \eqref{eq:Ak-lower-convex}
into \eqref{eq:convex-as-gap} yields \eqref{eq:convex-as-gap-k2}.
\end{proof}

\paragraph{Comparison with the expectation rate}
The expectation-level rate from the convex supermartingale argument is
\[
\E[f(x_k)-f^\star]\le \frac{\Phi_0^{\mathrm C}}{A_k}.
\]
By contrast, the uniform high-probability bound in \Cref{prop:convex-as}(i) differs only by the multiplicative factor \(1/\eta\):
\[
f(x_k)-f^\star\le \frac{\Phi_0^{\mathrm C}}{\eta\,A_k}
\qquad\text{for all }k\ge 0
\]
with probability at least \(1-\eta\).
The almost sure eventual bound in \Cref{prop:convex-as}(ii) differs from the expectation rate by the explicit logarithmic factor
\[
\bigl\lceil \log_2 k \bigr\rceil
\bigl(\log \lceil \log_2 k \rceil \bigr)^{1+\varepsilon}.
\]
No monotonicity of \(f(x_k)\) is used anywhere in the proof.

\subsection{Strongly convex case}

Define the strongly convex Lyapunov process by
\[
\Phi_k^{\mathrm{SC}}
\coloneqq
f(x_k)-f^\star+\frac{\mu}{2}\|z_k-x^\star\|^2.
\]
Set
\[
\rho\coloneqq 1-\theta.
\]
By \Cref{thm:rs-nesterov-strong}, \(\theta\in(0,1]\), and hence
\(\rho=1-\theta\in[0,1)\).Moreover, by
\eqref{eq:conditional-contraction},
\begin{equation}
\label{eq:sc-supermartingale}
\E[\Phi_{k+1}^{\mathrm{SC}}\mid \mathcal F_k]
\le
\rho\,\Phi_k^{\mathrm{SC}}
\qquad\text{for all }k\ge 0.
\end{equation}
Also, by the definition of \(\Phi_k^{\mathrm{SC}}\),
\begin{equation}
\label{eq:sc-dominate-gap}
f(x_k)-f^\star \le \Phi_k^{\mathrm{SC}}
\qquad\text{for all }k\ge 0.
\end{equation}

\begin{proposition}[Strongly convex case: high-probability and almost sure convergence]
\label{prop:sc-as}
Let \(\rho\coloneqq 1-\theta\).

If \(\rho=0\), then
\[
\Phi_k^{\mathrm{SC}}=0
\qquad\text{for all }k\ge 1
\]
almost surely. Consequently, for every \(\eta\in(0,1)\), with probability one,
\[
f(x_0)-f^\star\le \frac{\Phi_0^{\mathrm{SC}}}{\eta},
\qquad
f(x_k)-f^\star=0
\quad\text{for all }k\ge 1,
\]
and for every \(q\in(0,1)\), \eqref{eq:sc-as-eventual} holds with \(K_q=1\).

If \(\rho\in(0,1)\), then the following hold.
\begin{enumerate}
\item[(i)] For every \(\eta\in(0,1)\), with probability at least \(1-\eta\),
\begin{equation}
\label{eq:sc-hp-uniform}
f(x_k)-f^\star
\le
\frac{\Phi_0^{\mathrm{SC}}}{\eta}\,\rho^k
\qquad\text{for all }k\ge 0.
\end{equation}

\item[(ii)] For every number \(q\in(\rho,1)\), almost surely there exists a finite random integer \(K_q\) such that for all \(k\ge K_q\),
\begin{equation}
\label{eq:sc-as-eventual}
f(x_k)-f^\star \le \Phi_0^{\mathrm{SC}}\,q^k.
\end{equation}
\end{enumerate}
\end{proposition}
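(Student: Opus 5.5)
We handle the degenerate case \(\rho=0\) first, then use a normalized supermartingale to get (i) and (ii) when \(\rho\in(0,1)\).

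\textbf{Case \(\rho=0\).} Here \eqref{eq:sc-supermartingale} gives \(\E[\Phi_{k+1}^{\mathrm{SC}}\mid\mathcal F_k]\le 0\) for every \(k\ge0\). Since \(\Phi_{k+1}^{\mathrm{SC}}\ge0\), this forces \(\Phi_{k+1}^{\mathrm{SC}}=0\) almost surely. By \eqref{eq:sc-dominate-gap}, \(f(x_k)-f^\star=0\) for all \(k\ge1\) almost surely. The bound at \(k=0\) follows from \(f(x_0)-f^\star\le\Phi_0^{\mathrm{SC}}\le\Phi_0^{\mathrm{SC}}/\eta\). Finally, \eqref{eq:sc-as-eventual} holds trivially with \(K_q=1\).

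\textbf{Normalization for \(\rho\in(0,1)\).} Define \(M_k\coloneqq\rho^{-k}\Phi_k^{\mathrm{SC}}\). By \eqref{eq:sc-supermartingale},
\[
\E[M_{k+1}\mid\mathcal F_k]\le\rho^{-(k+1)}\rho\,\Phi_k^{\mathrm{SC}}=M_k .
\]
So \((M_k)\) is a nonnegative supermartingale with \(M_0=\Phi_0^{\mathrm{SC}}\), which is deterministic because \(z_0=x_0\). Integrability follows from \(\E[M_k]\le\Phi_0^{\mathrm{SC}}\), which is obtained inductively via the tower property exactly as in the proof of \Cref{thm:rs-nesterov-strong}.

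\textbf{Part (i).} Apply Ville's maximal inequality to \((M_k)\):
\[
\mathbb P\Bigl(\sup_k M_k\ge \Phi_0^{\mathrm{SC}}/\eta\Bigr)\le\eta .
\]
On the complementary event, \(\Phi_k^{\mathrm{SC}}\le \rho^k\Phi_0^{\mathrm{SC}}/\eta\) for all \(k\). Combining this with \eqref{eq:sc-dominate-gap} gives \eqref{eq:sc-hp-uniform}. If \(\Phi_0^{\mathrm{SC}}=0\), everything vanishes almost surely and the claim is trivial.

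\textbf{Part (ii).} Nonnegative supermartingales converge almost surely to a finite limit, so \(\sup_k M_k<\infty\) almost surely. Then \(\Phi_k^{\mathrm{SC}}\le (\sup_j M_j)\,\rho^k\). Fix \(q\in(\rho,1)\). Since \((\rho/q)^k\to0\), there is a finite random \(K_q\) with \((\sup_j M_j)(\rho/q)^k\le\Phi_0^{\mathrm{SC}}\) for all \(k\ge K_q\). This yields \eqref{eq:sc-as-eventual} after applying \eqref{eq:sc-dominate-gap}.

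The step needing care is the case \(\Phi_0^{\mathrm{SC}}=0\), where the threshold \(\Phi_0^{\mathrm{SC}}\) is zero. In that case \(M_k=0\) almost surely, because it is nonnegative with zero expectation, so the bound still holds.

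An alternative that avoids the convergence theorem is Borel--Cantelli. By Markov's inequality,
\[
\mathbb P\bigl(\Phi_k^{\mathrm{SC}}>\Phi_0^{\mathrm{SC}}q^k\bigr)\le(\rho/q)^k ,
\]
and these probabilities are summable. This gives (ii) directly, and I would present this version since it is cleaner and handles \(\Phi_0^{\mathrm{SC}}>0\) immediately. No step is a real obstacle; the only subtlety is the degenerate cases.
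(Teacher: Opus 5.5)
Your proposal is correct and follows the paper's proof essentially step for step. It uses the same degenerate-case argument for \(\rho=0\), the normalized supermartingale \(M_k=\rho^{-k}\Phi_k^{\mathrm{SC}}\) with Ville's maximal inequality for (i), and Markov's inequality plus Borel--Cantelli with summable bounds \((\rho/q)^k\) for (ii). Your explicit handling of \(\Phi_0^{\mathrm{SC}}=0\) is a small extra point the paper leaves implicit; there the thresholds vanish, so Ville and Markov do not apply directly.
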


\begin{proof}
First consider the case \(\rho=0\). Since \(\Phi_{k+1}^{\mathrm{SC}}\ge 0\) and
\eqref{eq:sc-supermartingale} gives
\[
\E[\Phi_{k+1}^{\mathrm{SC}}\mid\mathcal F_k]\le 0,
\]
we have \(\Phi_{k+1}^{\mathrm{SC}}=0\) almost surely for every \(k\ge 0\).
Therefore \(\Phi_k^{\mathrm{SC}}=0\) almost surely for all \(k\ge 1\). The claims in
the case \(\rho=0\) follow from \eqref{eq:sc-dominate-gap}.

It remains to consider the case \(\rho\in(0,1)\). Define the normalized process
\[
M_k\coloneqq \rho^{-k}\Phi_k^{\mathrm{SC}}.
\]
Then
\[
\E[M_{k+1}\mid \mathcal F_k]
=
\rho^{-(k+1)}\E[\Phi_{k+1}^{\mathrm{SC}}\mid \mathcal F_k]
\le
\rho^{-k}\Phi_k^{\mathrm{SC}}
=
M_k.
\]
Hence \((M_k)_{k\ge 0}\) is a nonnegative supermartingale.

For part (i), Ville's maximal inequality gives
\[
\mathbb P\!\left(\sup_{j\ge 0}M_j\ge \frac{\Phi_0^{\mathrm{SC}}}{\eta}\right)\le \eta.
\]
Therefore, with probability at least \(1-\eta\),
\[
M_k\le \frac{\Phi_0^{\mathrm{SC}}}{\eta}
\qquad\text{for all }k\ge 0,
\]
that is,
\[
\Phi_k^{\mathrm{SC}}\le \frac{\Phi_0^{\mathrm{SC}}}{\eta}\rho^k
\qquad\text{for all }k\ge 0.
\]
Using \eqref{eq:sc-dominate-gap}, we obtain \eqref{eq:sc-hp-uniform}.

For part (ii), fix any \(q\in(\rho,1)\), and define
\[
\eta_k\coloneqq \left(\frac{\rho}{q}\right)^k.
\]
Since \(\rho/q<1\), the series \(\sum_{k=0}^{\infty}\eta_k\) converges.
Moreover, because \(\E[M_k]\le M_0=\Phi_0^{\mathrm{SC}}\), Markov's inequality gives
\[
\mathbb P\!\left(
M_k>\frac{\Phi_0^{\mathrm{SC}}}{\eta_k}
\right)
\le
\eta_k.
\]
Equivalently,
\[
\mathbb P\!\left(
\Phi_k^{\mathrm{SC}}>\Phi_0^{\mathrm{SC}}\,q^k
\right)
\le
\left(\frac{\rho}{q}\right)^k.
\]
Since
\[
\sum_{k=0}^{\infty}\left(\frac{\rho}{q}\right)^k<\infty,
\]
the Borel--Cantelli lemma implies that, almost surely, only finitely many of these events occur.
Hence, almost surely, there exists a finite random integer \(K_q\) such that for all \(k\ge K_q\),
\[
\Phi_k^{\mathrm{SC}}\le \Phi_0^{\mathrm{SC}}\,q^k.
\]
Using \eqref{eq:sc-dominate-gap}, we obtain \eqref{eq:sc-as-eventual}.
\end{proof}

\paragraph{Comparison with the expectation rate}
The expectation-level strongly convex rate is
\[
\E[f(x_k)-f^\star]\le \Phi_0^{\mathrm{SC}}\rho^k.
\]
The uniform high-probability bound in \Cref{prop:sc-as}(i) differs only by the multiplicative factor \(1/\eta\):
\[
f(x_k)-f^\star\le \frac{\Phi_0^{\mathrm{SC}}}{\eta}\rho^k
\qquad\text{for all }k\ge 0
\]
with probability at least \(1-\eta\).
The almost sure eventual bound in \Cref{prop:sc-as}(ii) replaces the contraction factor \(\rho\) by any prescribed factor \(q\in(\rho,1)\).
Thus the almost sure linear rate can be made arbitrarily close to the expectation linear rate, but it is not exactly identical.

\section{Why the classical two-sequence template is not directly portable}
\label{app:two-sequence-obstruction}

In this appendix, we formalize a limitation of a direct transplantation of the
classical two-sequence Nesterov template to randomized-subspace updates.
These results should be interpreted as proof-template obstructions rather than impossibility
results for all conceivable two-sequence accelerated schemes.
Rather, they show that, under the sketch structure satisfied by the Haar and coordinate sketches,
the classical proof mechanism is not directly compatible with the naive sketched analogue
outside a near-full-dimensional regime.

\begin{proposition}
\label{prop:convex-two-seq-obstruction}
Let \(f:\R^d\to\R\) be convex and \(L\)-smooth, and let \(x^\star\) be a minimizer of \(f\).
Let \(\{P_k\}_{k\ge 0}\) be an i.i.d.\ sketch sequence satisfying
\[
\E[P_kP_k^\top]=I_d,
\qquad
P_k^\top P_k=\frac{d}{r}I_r
\]
for some \(1\le r\le d\).
Consider the direct sketched two-sequence recursion
\[
x_{k+1}=y_k-\eta P_kP_k^\top \nabla f(y_k),
\qquad
y_{k+1}=x_{k+1}+\beta_k(x_{k+1}-x_k),
\]
where \(\eta>0\) is constant and \(\{\beta_k\}_{k\ge 0}\) is arbitrary.

For each \(k\ge0\), let
\[
\F_k\coloneqq \sigma(P_0,\dots,P_{k-1}),
\qquad
g_k\coloneqq \nabla f(y_k),
\qquad
M_k\coloneqq P_kP_k^\top.
\]

Suppose, as in the classical proof template, that there exist sequences
\(\{\lambda_k\}_{k\ge0}\subset(0,1]\) and
\(\{z_k\}_{k\ge0}\subset\R^d\) such that, for every \(k\ge0\),
\begin{equation}
y_k=(1-\lambda_k)x_k+\lambda_k z_k,
\qquad
z_{k+1}=z_k-\frac{\eta}{\lambda_k}M_k g_k.
\label{eq:convex-obstruction-affine}
\end{equation}

Then, for every \(k\ge0\),
\begin{align}
&\E\!\left[
\frac{1}{\lambda_k^2}\bigl(f(x_{k+1})-f^\star\bigr)
+\frac{1}{2\eta}\|z_{k+1}-x^\star\|^2
\;\middle|\; \F_k
\right]
\notag\\
&\le
\frac{1-\lambda_k}{\lambda_k^2}\bigl(f(x_k)-f^\star\bigr)
+\frac{1}{2\eta}\|z_k-x^\star\|^2
+\frac{\eta}{2\lambda_k^2}
\Bigl(
\frac{d}{r}(1+L\eta)-2
\Bigr)\|g_k\|^2.
\label{eq:convex-obstruction-main} 
\end{align}

In particular, if \(r\le d/2\), then the coefficient
\[
\frac{\eta}{2\lambda_k^2}
\Bigl(
\frac{d}{r}(1+L\eta)-2
\Bigr)
\]
is strictly positive for every \(\eta>0\).
Therefore, the estimate \eqref{eq:convex-obstruction-main} retains a strictly positive residual coefficient, and hence this affine hidden-sequence template does not recover the classical two-term decrease by the standard argument.
\end{proposition}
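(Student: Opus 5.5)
The plan is to mimic the classical one-step Lyapunov computation for Nesterov's method, now with the sketched step. There are two ingredients: a descent estimate for the $x$-step, and an exact expansion of the $z$-step distance. The only sketch facts used are $\E[M_k\mid\F_k]=I_d$ and the identity $M_k^2=P_k(P_k^\top P_k)P_k^\top=\frac{d}{r}M_k$, so that $\E[M_k^2\mid\F_k]=\frac dr I_d$.

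\textbf{Step 1 ($x$-step).} By $L$-smoothness, $f(x_{k+1})\le f(y_k)-\eta\,g_k^\top M_k g_k+\frac{L\eta^2}{2}g_k^\top M_k^2 g_k$. Conditioning gives
\[
\E[f(x_{k+1})\mid\F_k]\le f(y_k)-\eta\|g_k\|^2+\frac{L\eta^2 d}{2r}\|g_k\|^2 .
\]
Here I deliberately bound $\Lmat\preceq LI_d$ and use the exact second moment rather than any $\ell$. This matches the target, whose coefficient contains $\frac dr L\eta$.

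\textbf{Step 2 ($z$-step).} Expand
\[
\frac1{2\eta}\|z_{k+1}-x^\star\|^2=\frac1{2\eta}\|z_k-x^\star\|^2-\frac1{\lambda_k}\langle z_k-x^\star,M_kg_k\rangle+\frac{\eta}{2\lambda_k^2}g_k^\top M_k^2g_k .
\]
Taking conditional expectation, the last term becomes $\frac{\eta d}{2r\lambda_k^2}\|g_k\|^2$.

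\textbf{Step 3 (combine).} Multiply Step 1 (after subtracting $f^\star$) by $1/\lambda_k^2$ and add Step 2. Then handle the inner product with the affine relation $\lambda_k z_k=y_k-(1-\lambda_k)x_k$, which gives
\[
-\frac1{\lambda_k}\langle z_k-x^\star,g_k\rangle=\frac{1}{\lambda_k^2}\Bigl[(1-\lambda_k)\langle x_k-y_k,g_k\rangle+\lambda_k\langle x^\star-y_k,g_k\rangle\Bigr].
\]
Apply convexity twice: $\langle g_k,x_k-y_k\rangle\le f(x_k)-f(y_k)$ and $\langle g_k,x^\star-y_k\rangle\le f^\star-f(y_k)$. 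The $f(y_k)$ terms then cancel exactly as in the proof of \Cref{thm:rs-nesterov-convex}, leaving $\frac{1-\lambda_k}{\lambda_k^2}(f(x_k)-f^\star)$.

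The gradient coefficient collects to
\[
\frac{1}{\lambda_k^2}\Bigl(-\eta+\frac{L\eta^2d}{2r}+\frac{\eta d}{2r}\Bigr)=\frac{\eta}{2\lambda_k^2}\Bigl(\frac dr(1+L\eta)-2\Bigr),
\]
which is \eqref{eq:convex-obstruction-main}.

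\textbf{Step 4 (positivity).} If $r\le d/2$ then $d/r\ge2$, so $\frac dr(1+L\eta)-2\ge 2L\eta>0$ for every $\eta>0$. Since $\lambda_k>0$, the coefficient is strictly positive.

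There is no real obstacle here. The main care point is bookkeeping the $1/\lambda_k$ versus $1/\lambda_k^2$ weights in Step 3, so that the cross term matches the affine identity and the $f(y_k)$ terms cancel.
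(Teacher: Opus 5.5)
Your proposal is correct and follows the paper's proof essentially step for step. Both use $M_k^2=\frac{d}{r}M_k$ to get the conditional moments, the same $x$-step and $z$-step expansions, the $1/\lambda_k^2$ weighting, the same affine identity for the cross term with two applications of convexity, and the same positivity check for $r\le d/2$. One minor wording point: in this proposition $f$ is only assumed scalar $L$-smooth, so there is no $\Lmat$ to bound; you simply apply the scalar descent lemma, exactly as the paper does.
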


\begin{proof}
Fix \(k\ge0\).
Since \(P_k^\top P_k=(d/r)I_r\),
\[
M_k^2
=
P_k(P_k^\top P_k)P_k^\top
=
\frac{d}{r}M_k.
\]
Moreover, \(P_k\) is independent of \(\F_k\), and \(\E[M_k]=I_d\), so
\[
\E[M_k\mid\F_k]=I_d,
\qquad
\E[M_k^2\mid\F_k]
=
\frac{d}{r}I_d.
\label{eq:convex-obstruction-M2-proof}
\]

By \(L\)-smoothness of \(f\),
\[
f(v)\le f(u)+\ip{\nabla f(u)}{v-u}+\frac{L}{2}\|v-u\|^2
\qquad
\text{for all }u,v\in\R^d.
\]
Applying this with \(u=y_k\) and \(v=x_{k+1}=y_k-\eta M_k g_k\), we obtain
\[
f(x_{k+1})
\le
f(y_k)-\eta g_k^\top M_k g_k+\frac{L\eta^2}{2}g_k^\top M_k^2 g_k.
\]
Taking conditional expectation and using \(\E[M_k\mid\F_k]=I_d\) and
\(\E[M_k^2\mid\F_k]=\frac{d}{r}I_d\) gives
\begin{equation}
\E[f(x_{k+1})\mid\F_k]
\le
f(y_k)-\eta\|g_k\|^2+\frac{L\eta^2}{2}\frac{d}{r}\|g_k\|^2.
\label{eq:convex-obstruction-xstep}
\end{equation}

We now estimate the \(z\)-term. From \eqref{eq:convex-obstruction-affine},
\[
z_{k+1}-x^\star
=
(z_k-x^\star)-\frac{\eta}{\lambda_k}M_k g_k,
\]
and hence
\[
\|z_{k+1}-x^\star\|^2
=
\|z_k-x^\star\|^2
-\frac{2\eta}{\lambda_k}\ip{z_k-x^\star}{M_k g_k}
+\frac{\eta^2}{\lambda_k^2}\|M_k g_k\|^2.
\]
Multiplying by \(1/(2\eta)\), taking conditional expectation, and using again
\(\E[M_k\mid\F_k]=I_d\) and \(\E[M_k^2\mid\F_k]=\frac{d}{r}I_d\), we get
\begin{align}
&\E\!\left[
\frac{1}{2\eta}\|z_{k+1}-x^\star\|^2
\;\middle|\;\F_k
\right]
\notag\\
&=
\frac{1}{2\eta}\|z_k-x^\star\|^2
-\frac{1}{\lambda_k}\ip{z_k-x^\star}{g_k}
+\frac{\eta}{2\lambda_k^2}\frac{d}{r}\|g_k\|^2.
\label{eq:convex-obstruction-zstep}
\end{align}

Subtracting \(f^\star\) from \eqref{eq:convex-obstruction-xstep}, multiplying by \(1/\lambda_k^2\), and adding \eqref{eq:convex-obstruction-zstep}, we obtain
\begin{align}
&\E\!\left[
\frac{1}{\lambda_k^2}\bigl(f(x_{k+1})-f^\star\bigr)
+\frac{1}{2\eta}\|z_{k+1}-x^\star\|^2
\;\middle|\;\F_k
\right]
\notag\\
&\le
\frac{1}{\lambda_k^2}\bigl(f(y_k)-f^\star\bigr)
+\frac{1}{2\eta}\|z_k-x^\star\|^2
-\frac{1}{\lambda_k}\ip{z_k-x^\star}{g_k}
\notag\\
&\qquad
+\frac{\eta}{2\lambda_k^2}
\Bigl(
\frac{d}{r}(1+L\eta)-2
\Bigr)\|g_k\|^2.
\label{eq:convex-obstruction-before-convexity}
\end{align}

It remains to control the mixed term. Since
\[
y_k=(1-\lambda_k)x_k+\lambda_k z_k,
\]
we have
\[
\frac{1-\lambda_k}{\lambda_k^2}(x_k-y_k)
+\frac{1}{\lambda_k}(x^\star-y_k)
=
\frac{1}{\lambda_k}(x^\star-z_k).
\]
Taking inner products with \(g_k\) gives
\begin{equation}
-\frac{1}{\lambda_k}\ip{z_k-x^\star}{g_k}
=
\frac{1-\lambda_k}{\lambda_k^2}\ip{x_k-y_k}{g_k}
+\frac{1}{\lambda_k}\ip{x^\star-y_k}{g_k}.
\label{eq:convex-obstruction-keyid}
\end{equation}
By convexity of \(f\),
\[
f(x_k)\ge f(y_k)+\ip{g_k}{x_k-y_k},
\qquad
f^\star=f(x^\star)\ge f(y_k)+\ip{g_k}{x^\star-y_k},
\]
so
\[
\ip{x_k-y_k}{g_k}\le f(x_k)-f(y_k),
\qquad
\ip{x^\star-y_k}{g_k}\le f^\star-f(y_k).
\]
Substituting these bounds into \eqref{eq:convex-obstruction-keyid} yields
\begin{equation}
-\frac{1}{\lambda_k}\ip{z_k-x^\star}{g_k}
\le
\frac{1-\lambda_k}{\lambda_k^2}\bigl(f(x_k)-f(y_k)\bigr)
+
\frac{1}{\lambda_k}\bigl(f^\star-f(y_k)\bigr).
\label{eq:convex-obstruction-keyupper}
\end{equation}

Substituting \eqref{eq:convex-obstruction-keyupper} into
\eqref{eq:convex-obstruction-before-convexity}, we get
\begin{align*}
&\E\!\left[
\frac{1}{\lambda_k^2}\bigl(f(x_{k+1})-f^\star\bigr)
+\frac{1}{2\eta}\|z_{k+1}-x^\star\|^2
\;\middle|\;\F_k
\right] \\
&\le
\frac{1}{\lambda_k^2}\bigl(f(y_k)-f^\star\bigr)
+\frac{1-\lambda_k}{\lambda_k^2}\bigl(f(x_k)-f(y_k)\bigr)
+\frac{1}{\lambda_k}\bigl(f^\star-f(y_k)\bigr)
+\frac{1}{2\eta}\|z_k-x^\star\|^2 \\
&\qquad
+\frac{\eta}{2\lambda_k^2}
\Bigl(
\frac{d}{r}(1+L\eta)-2
\Bigr)\|g_k\|^2.
\end{align*}
The \(f(y_k)\)-terms cancel, since
\[
\frac{1}{\lambda_k^2}
-\frac{1-\lambda_k}{\lambda_k^2}
-\frac{1}{\lambda_k}
=0.
\]
Therefore,
\[
\frac{1}{\lambda_k^2}\bigl(f(y_k)-f^\star\bigr)
+\frac{1-\lambda_k}{\lambda_k^2}\bigl(f(x_k)-f(y_k)\bigr)
+\frac{1}{\lambda_k}\bigl(f^\star-f(y_k)\bigr)
=
\frac{1-\lambda_k}{\lambda_k^2}\bigl(f(x_k)-f^\star\bigr),
\]
and \eqref{eq:convex-obstruction-main} follows.

Finally, if \(r\le d/2\), then \(d/r\ge2\), and thus
\[
\frac{d}{r}(1+L\eta)-2
\ge
2(1+L\eta)-2
=
2L\eta
>
0
\qquad
\text{for every }\eta>0.
\]
Hence the coefficient of \(\|g_k\|^2\) in \eqref{eq:convex-obstruction-main} is strictly positive.
Therefore, \eqref{eq:convex-obstruction-main} retains a strictly positive residual coefficient, and thus the estimate does not collapse to the classical two-term decrease by the standard argument.
\end{proof}

\begin{proposition}
\label{prop:strong-two-seq-obstruction-collapse}
Let \(f:\mathbb{R}^d\to\mathbb{R}\) be \(\mu\)-strongly convex and \(L\)-smooth, with \(\mu>0\).
Let \(\{P_k\}_{k\ge 0}\) be an i.i.d.\ sketch sequence satisfying
\[
\mathbb{E}[P_kP_k^\top]=I_d,
\qquad
P_k^\top P_k=\frac{d}{r}I_r
\]
for some \(1\le r\le d\).
Consider the direct sketched two-sequence recursion
\begin{equation}
x_{k+1}=y_k-\eta P_kP_k^\top \nabla f(y_k),
\qquad
y_{k+1}=x_{k+1}+\beta(x_{k+1}-x_k),
\label{eq:strong-direct-two-seq-collapse}
\end{equation}
where \(\eta>0\) and \(\beta\in[0,1)\) are constants. Define
\[
\theta\coloneqq \frac{1-\beta}{1+\beta}\in(0,1],
\qquad
\gamma\coloneqq \frac{\eta}{\theta},
\qquad
z_k\coloneqq \frac{(1+\theta)y_k-x_k}{\theta},
\qquad
u_k\coloneqq (1-\theta)z_k+\theta y_k,
\]
and
\[
\Phi_k\coloneqq f(x_k)-f^\star+\frac{\mu}{2}\|z_k-x^\star\|^2.
\]
Then
\begin{equation}
y_k=\frac{1}{1+\theta}x_k+\frac{\theta}{1+\theta}z_k,
\qquad
z_{k+1}=u_k-\gamma P_kP_k^\top \nabla f(y_k).
\label{eq:strong-hidden-two-seq-collapse}
\end{equation}

Writing \(g_k\coloneqq \nabla f(y_k)\), \(M_k\coloneqq P_kP_k^\top\), and
\(\mathcal F_k\coloneqq \sigma(P_0,\dots,P_{k-1})\), one has
\begin{align}
&\mathbb{E}\!\left[\Phi_{k+1}\middle|\mathcal F_k\right]-(1-\theta)\Phi_k
\notag\\
&\le
\theta\bigl(f(y_k)-f^\star\bigr)
+(1-\theta)\bigl(f(y_k)-f(x_k)\bigr)
-\mu\gamma\langle g_k,y_k-x^\star\rangle
+\mu\gamma\frac{1-\theta}{\theta}\langle g_k,x_k-y_k\rangle
\notag\\
&\qquad
+\frac{\mu\theta}{2}\|y_k-x^\star\|^2
-\frac{\mu(1-\theta)}{2\theta}\|x_k-y_k\|^2
+\left(
-\eta+\frac{d}{2r}(L\eta^2+\mu\gamma^2)
\right)\|g_k\|^2.
\label{eq:strong-precollapse-general}
\end{align}

If one wants the right-hand side of \eqref{eq:strong-precollapse-general} to collapse into the same two brackets as in the standard strongly-convex proof, namely
\begin{align}
&\mathbb{E}\!\left[\Phi_{k+1}\middle|\mathcal F_k\right]-(1-\theta)\Phi_k
\notag\\
&\le
\theta\Bigl(
f(y_k)-f^\star-\langle g_k,y_k-x^\star\rangle+\frac{\mu}{2}\|y_k-x^\star\|^2
\Bigr)
\notag\\
&\qquad
+(1-\theta)\Bigl(
f(y_k)-f(x_k)+\langle g_k,x_k-y_k\rangle
\Bigr)
\notag\\
&\qquad
-\frac{\mu(1-\theta)}{2\theta}\|x_k-y_k\|^2
\label{eq:strong-desired-collapse}
\end{align}
then matching the coefficients of the inner-product terms yields
\begin{equation}
\mu\gamma=\theta,
\qquad\text{that is,}\qquad
\gamma=\frac{\theta}{\mu},
\qquad\text{equivalently}\qquad
\eta=\frac{\theta^2}{\mu}.
\label{eq:strong-coupling-choice-collapse}
\end{equation}
Under \eqref{eq:strong-coupling-choice-collapse}, \eqref{eq:strong-precollapse-general} becomes
\begin{align}
&\mathbb{E}\!\left[\Phi_{k+1}\middle|\mathcal F_k\right]-(1-\theta)\Phi_k
\notag\\
&\le
\theta\Bigl(
f(y_k)-f^\star-\langle g_k,y_k-x^\star\rangle+\frac{\mu}{2}\|y_k-x^\star\|^2
\Bigr)
\notag\\
&\qquad
+(1-\theta)\Bigl(
f(y_k)-f(x_k)+\langle g_k,x_k-y_k\rangle
\Bigr)
\notag\\
&\qquad
-\frac{\mu(1-\theta)}{2\theta}\|x_k-y_k\|^2
+\frac{\theta^2}{\mu}
\left[
-1+\frac{d}{2r}\left(1+\frac{L\theta^2}{\mu}\right)
\right]\|g_k\|^2.
\label{eq:strong-collapse-final}
\end{align}

Therefore, in order for the remaining coefficient of \(\|g_k\|^2\) in
\eqref{eq:strong-collapse-final} to be nonpositive, it is necessary that
\begin{equation}
\frac{d}{r}\left(1+\frac{L\theta^2}{\mu}\right)\le 2.
\label{eq:strong-obstruction-condition-theta-collapse}
\end{equation}
Equivalently, since \(\eta=\theta^2/\mu\),
\begin{equation}
\frac{d}{r}(1+L\eta)\le 2.
\label{eq:strong-obstruction-condition-eta-collapse}
\end{equation}
In particular, if \(r\le d/2\), then \eqref{eq:strong-obstruction-condition-theta-collapse}
cannot hold for any \(\beta\in[0,1)\).
Therefore, outside a near-full-dimensional regime, the direct sketched analogue of the
classical strongly-convex two-sequence Nesterov template is incompatible with the
standard hidden-sequence proof mechanism.
\end{proposition}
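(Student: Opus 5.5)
The argument splits into three parts: the algebraic reparametrization, the one-step Lyapunov estimate, and the coefficient matching.

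\textbf{Reparametrization.} First verify \eqref{eq:strong-hidden-two-seq-collapse}. The $y_k$ identity is just the definition of $z_k$ rearranged. For the $z$-recursion, I would proceed as in the proof of \Cref{prop:strong-reduction-nesterov}, but run backwards. Write $x_{k+1}=y_k-\eta M_kg_k$ and $y_{k+1}=(1+\beta)x_{k+1}-\beta x_k$. Substitute into $z_{k+1}=((1+\theta)y_{k+1}-x_{k+1})/\theta$ and use $(1+\theta)(1+\beta)=2$ and $(1+\theta)\beta=1-\theta$. This gives
\[
z_{k+1}=\frac{1}{\theta}x_{k+1}-\frac{1-\theta}{\theta}x_k .
\]
Next eliminate $x_k=(1+\theta)y_k-\theta z_k$. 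The coefficient of $y_k$ becomes $\tfrac1\theta-\tfrac{(1-\theta)(1+\theta)}{\theta}=\theta$, the coefficient of $z_k$ becomes $1-\theta$, and the gradient term becomes $-\tfrac{\eta}{\theta}M_kg_k=-\gamma M_kg_k$. This is exactly $u_k-\gamma M_kg_k$.

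\textbf{One-step estimate.} Since $P_k^\top P_k=\tfrac dr I_r$, we have $M_k^2=\tfrac dr M_k$, hence $\E[M_k\mid\F_k]=I_d$ and $\E[M_k^2\mid\F_k]=\tfrac dr I_d$. The $x$-step via scalar $L$-smoothness gives
\[
\E[f(x_{k+1})\mid\F_k]\le f(y_k)-\eta\|g_k\|^2+\tfrac{L\eta^2 d}{2r}\|g_k\|^2 .
\]
The $z$-step gives
\[
\E\bigl[\tfrac\mu2\|z_{k+1}-x^\star\|^2\mid\F_k\bigr]=\tfrac\mu2\|u_k-x^\star\|^2-\mu\gamma\ip{g_k}{u_k-x^\star}+\tfrac{\mu\gamma^2d}{2r}\|g_k\|^2 .
\]
Now expand with $a=z_k-x^\star$ and $b=y_k-x^\star$, exactly as in \eqref{eq:w-square}--\eqref{eq:w-inner}:
\[
\tfrac\mu2\|u_k-x^\star\|^2=(1-\theta)\tfrac\mu2\|a\|^2+\theta\tfrac\mu2\|b\|^2-\theta(1-\theta)\tfrac\mu2\|a-b\|^2 ,
\]
with $a-b=(y_k-x_k)/\theta$. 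The cross term becomes $\ip{g_k}{u_k-x^\star}=\ip{g_k}{y_k-x^\star}-\tfrac{1-\theta}{\theta}\ip{g_k}{x_k-y_k}$.

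Subtracting $(1-\theta)\Phi_k$ then cancels the $(1-\theta)\tfrac\mu2\|a\|^2$ term. The function values rearrange as
\[
f(y_k)-f^\star-(1-\theta)(f(x_k)-f^\star)=\theta(f(y_k)-f^\star)+(1-\theta)(f(y_k)-f(x_k)) .
\]
The term $-\theta(1-\theta)\tfrac\mu2\|a-b\|^2$ equals $-\tfrac{\mu(1-\theta)}{2\theta}\|x_k-y_k\|^2$. Together these give \eqref{eq:strong-precollapse-general}.

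\textbf{Coefficient matching and conclusion.} Compare \eqref{eq:strong-precollapse-general} with \eqref{eq:strong-desired-collapse}. The coefficient of $\ip{g_k}{y_k-x^\star}$ is $-\mu\gamma$ in the first and $-\theta$ in the second. The coefficient of $\ip{g_k}{x_k-y_k}$ is $\mu\gamma(1-\theta)/\theta$ versus $1-\theta$. Either match forces $\mu\gamma=\theta$; the second requires $\theta<1$, but the first alone suffices. Hence $\gamma=\theta/\mu$ and $\eta=\theta\gamma=\theta^2/\mu$.

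Substituting, the $\|g_k\|^2$ coefficient is
\[
-\tfrac{\theta^2}{\mu}+\tfrac{d}{2r}\Bigl(\tfrac{L\theta^4}{\mu^2}+\tfrac{\theta^2}{\mu}\Bigr),
\]
which is the bracket in \eqref{eq:strong-collapse-final}. Since $\theta^2/\mu>0$, this coefficient is nonpositive if and only if \eqref{eq:strong-obstruction-condition-theta-collapse} holds, and that condition is the same as \eqref{eq:strong-obstruction-condition-eta-collapse}. Finally, if $r\le d/2$, then
\[
\tfrac dr\Bigl(1+\tfrac{L\theta^2}{\mu}\Bigr)\ge 2\Bigl(1+\tfrac{L\theta^2}{\mu}\Bigr)>2
\]
because $\theta>0$, so the condition fails for every $\beta\in[0,1)$.

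The main obstacle is the bookkeeping in the one-step estimate. The cross term $\ip{g_k}{u_k-x^\star}$ must be split into the $y_k$ and $x_k-y_k$ pieces with the correct $(1-\theta)/\theta$ factor, and the mixed norm must be converted to $\|x_k-y_k\|^2$. I would check these against the strongly convex proof of \Cref{thm:rs-nesterov-strong}, which is the special case $\mu\gamma=\theta$.
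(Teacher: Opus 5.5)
Your proposal is correct and follows the paper's proof essentially step for step. It uses the same reparametrization to the hidden sequence, the same one-step estimate via $M_k^2=\tfrac{d}{r}M_k$, and the same coefficient matching that forces $\mu\gamma=\theta$. The only differences are cosmetic. First, you expand $\frac{\mu}{2}\|u_k-x^\star\|^2$ with the convex-combination identity from the proof of \Cref{thm:rs-nesterov-strong}, whereas the paper expands directly in $y_k-x^\star$ and $x_k-y_k$. Second, your remark that the first matching condition alone suffices is slightly more careful than the paper's claim that the two conditions are equivalent, since that equivalence fails at $\theta=1$.
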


\begin{proof}
Fix \(k\ge0\), and let
\[
\mathcal F_k\coloneqq \sigma(P_0,\dots,P_{k-1}),
\qquad
M_k\coloneqq P_kP_k^\top,
\qquad
g_k\coloneqq \nabla f(y_k).
\]

We first rewrite \eqref{eq:strong-direct-two-seq-collapse} in hidden-sequence form.
By the definition of \(z_k\),
\[
z_k=\frac{(1+\theta)y_k-x_k}{\theta},
\]
so that
\[
y_k=\frac{1}{1+\theta}x_k+\frac{\theta}{1+\theta}z_k.
\]
Also,
\[
z_{k+1}
=\frac{(1+\theta)y_{k+1}-x_{k+1}}{\theta}.
\]
Using
\[
y_{k+1}=x_{k+1}+\frac{1-\theta}{1+\theta}(x_{k+1}-x_k),
\]
we obtain
\begin{align*}
z_{k+1}
&=
\frac{(1+\theta)y_{k+1}-x_{k+1}}{\theta}\\
&=
\frac{1}{\theta}x_{k+1}-\frac{1-\theta}{\theta}x_k\\
&=
\frac{1}{\theta}(y_k-\eta M_k g_k)-\frac{1-\theta}{\theta}x_k.
\end{align*}
On the other hand,
\[
(1-\theta)z_k+\theta y_k
=
\frac{1-\theta}{\theta}\bigl((1+\theta)y_k-x_k\bigr)+\theta y_k
=
\frac{1}{\theta}y_k-\frac{1-\theta}{\theta}x_k.
\]
Hence
\[
z_{k+1}=(1-\theta)z_k+\theta y_k-\frac{\eta}{\theta}M_k g_k
=u_k-\gamma M_k g_k,
\]
which proves \eqref{eq:strong-hidden-two-seq-collapse}.

We next derive the one-step Lyapunov estimate for a general \(\gamma\), and only then identify
the value of \(\gamma\) for which the same collapse pattern as in the standard proof occurs.

From
\[
x_k-y_k=\theta(y_k-z_k),
\]
we obtain
\[
z_k-x^\star
=
y_k-x^\star-\frac{1}{\theta}(x_k-y_k).
\]
Also, since
\[
u_k=(1-\theta)z_k+\theta y_k
=
y_k-\frac{1-\theta}{\theta}(x_k-y_k),
\]
the update \(z_{k+1}=u_k-\gamma M_k g_k\) yields
\[
z_{k+1}-x^\star
=
y_k-x^\star-\frac{1-\theta}{\theta}(x_k-y_k)-\gamma M_k g_k.
\]
Therefore, a direct expansion gives
\begin{align}
&\|z_{k+1}-x^\star\|^2-(1-\theta)\|z_k-x^\star\|^2
\notag\\
&=
\theta\|y_k-x^\star\|^2
-\frac{1-\theta}{\theta}\|x_k-y_k\|^2
-2\gamma\langle M_k g_k,y_k-x^\star\rangle
+\frac{2\gamma(1-\theta)}{\theta}\langle M_k g_k,x_k-y_k\rangle
+\gamma^2 g_k^\top M_k^2 g_k.
\label{eq:strong-square-expansion-general}
\end{align}

Now
\[
M_k^2=P_k(P_k^\top P_k)P_k^\top=\frac{d}{r}M_k.
\]
Since \(P_k\) is independent of \(\mathcal F_k\) and \(\mathbb E[M_k]=I_d\), we have
\[
\mathbb E[M_k\mid\mathcal F_k]=I_d,
\qquad
\mathbb E[M_k^2\mid\mathcal F_k]=\frac{d}{r}I_d.
\]
Taking conditional expectation in \eqref{eq:strong-square-expansion-general}, we obtain
\begin{align}
&\mathbb E\!\left[
\frac{\mu}{2}\|z_{k+1}-x^\star\|^2
\middle|\mathcal F_k
\right]
-\frac{(1-\theta)\mu}{2}\|z_k-x^\star\|^2
\notag\\
&=
\frac{\mu\theta}{2}\|y_k-x^\star\|^2
-\mu\gamma\langle g_k,y_k-x^\star\rangle
+\mu\gamma\frac{1-\theta}{\theta}\langle g_k,x_k-y_k\rangle
-\frac{\mu(1-\theta)}{2\theta}\|x_k-y_k\|^2
+\frac{d}{2r}\mu\gamma^2\|g_k\|^2.
\label{eq:strong-z-diff-general}
\end{align}

On the function-value side, by \(L\)-smoothness and
\[
x_{k+1}=y_k-\eta M_k g_k,
\]
we have
\[
f(x_{k+1})
\le
f(y_k)-\eta g_k^\top M_k g_k+\frac{L\eta^2}{2}g_k^\top M_k^2 g_k.
\]
Taking conditional expectation gives
\begin{equation}
\mathbb E[f(x_{k+1})\mid \mathcal F_k]
\le
f(y_k)-\eta\|g_k\|^2+\frac{d}{2r}L\eta^2\|g_k\|^2.
\label{eq:strong-smooth-step-general}
\end{equation}

We now expand \(\mathbb E[\Phi_{k+1}\mid\mathcal F_k]-(1-\theta)\Phi_k\):
\begin{align}
&\mathbb E\!\left[\Phi_{k+1}\middle|\mathcal F_k\right]-(1-\theta)\Phi_k
\notag\\
&=
\mathbb E[f(x_{k+1})\mid\mathcal F_k]-f^\star-(1-\theta)(f(x_k)-f^\star)
\notag\\
&\qquad
+\mathbb E\!\left[
\frac{\mu}{2}\|z_{k+1}-x^\star\|^2
\middle|\mathcal F_k
\right]
-\frac{(1-\theta)\mu}{2}\|z_k-x^\star\|^2
\notag\\
&=
\bigl(\mathbb E[f(x_{k+1})\mid\mathcal F_k]-f(y_k)\bigr)
+\theta(f(y_k)-f^\star)
+(1-\theta)(f(y_k)-f(x_k))
\notag\\
&\qquad
+\mathbb E\!\left[
\frac{\mu}{2}\|z_{k+1}-x^\star\|^2
\middle|\mathcal F_k
\right]
-\frac{(1-\theta)\mu}{2}\|z_k-x^\star\|^2.
\label{eq:strong-phi-decomp-general}
\end{align}
Substituting \eqref{eq:strong-z-diff-general} and \eqref{eq:strong-smooth-step-general} into
\eqref{eq:strong-phi-decomp-general}, we obtain
\begin{align}
&\mathbb E\!\left[\Phi_{k+1}\middle|\mathcal F_k\right]-(1-\theta)\Phi_k
\notag\\
&\le
\theta(f(y_k)-f^\star)
+(1-\theta)(f(y_k)-f(x_k))
-\mu\gamma\langle g_k,y_k-x^\star\rangle
+\mu\gamma\frac{1-\theta}{\theta}\langle g_k,x_k-y_k\rangle
\notag\\
&\qquad
+\frac{\mu\theta}{2}\|y_k-x^\star\|^2
-\frac{\mu(1-\theta)}{2\theta}\|x_k-y_k\|^2
+\left(
-\eta+\frac{d}{2r}(L\eta^2+\mu\gamma^2)
\right)\|g_k\|^2,
\end{align}
which is \eqref{eq:strong-precollapse-general}.

We now derive the value of \(\gamma\) for which the same collapse pattern as in the
standard proof occurs. In \eqref{eq:strong-precollapse-general}, the inner-product terms are
\[
-\mu\gamma\langle g_k,y_k-x^\star\rangle
+\mu\gamma\frac{1-\theta}{\theta}\langle g_k,x_k-y_k\rangle.
\]
In order to rewrite the inner-product terms in \eqref{eq:strong-precollapse-general}
with the same coefficients as in \eqref{eq:strong-desired-collapse}, one must impose
\[
\mu\gamma=\theta,
\qquad
\mu\gamma\frac{1-\theta}{\theta}=1-\theta.
\]
These two identities are equivalent, and therefore
\[
\gamma=\frac{\theta}{\mu}.
\]
Since \(\gamma=\eta/\theta\), this is equivalent to
\[
\eta=\frac{\theta^2}{\mu},
\]
which proves \eqref{eq:strong-coupling-choice-collapse}.

Substituting \(\gamma=\theta/\mu\) and \(\eta=\theta^2/\mu\) into
\eqref{eq:strong-precollapse-general}, we obtain
\begin{align}
&\mathbb E\!\left[\Phi_{k+1}\middle|\mathcal F_k\right]-(1-\theta)\Phi_k
\notag\\
&\le
\theta\Bigl(
f(y_k)-f^\star-\langle g_k,y_k-x^\star\rangle+\frac{\mu}{2}\|y_k-x^\star\|^2
\Bigr)
\notag\\
&\qquad
+(1-\theta)\Bigl(
f(y_k)-f(x_k)+\langle g_k,x_k-y_k\rangle
\Bigr)
\notag\\
&\qquad
-\frac{\mu(1-\theta)}{2\theta}\|x_k-y_k\|^2
+\left(
-\eta+\frac{d}{2r}(L\eta^2+\mu\gamma^2)
\right)\|g_k\|^2.
\label{eq:strong-after-matching}
\end{align}
Since
\[
-\eta+\frac{d}{2r}(L\eta^2+\mu\gamma^2)
=
-\frac{\theta^2}{\mu}
+\frac{d}{2r}
\left(
L\frac{\theta^4}{\mu^2}
+
\frac{\theta^2}{\mu}
\right)
=
\frac{\theta^2}{\mu}
\left[
-1+\frac{d}{2r}\left(1+\frac{L\theta^2}{\mu}\right)
\right],
\]
this yields \eqref{eq:strong-collapse-final}.

Finally, by \(\mu\)-strong convexity,
\[
f(y_k)-f^\star-\langle g_k,y_k-x^\star\rangle+\frac{\mu}{2}\|y_k-x^\star\|^2\le 0,
\]
and by convexity,
\[
f(y_k)-f(x_k)+\langle g_k,x_k-y_k\rangle\le 0.
\]
Thus, in order for the remaining coefficient of \(\|g_k\|^2\) in
\eqref{eq:strong-collapse-final} to be nonpositive, it is necessary that
\[
\frac{d}{r}\left(1+\frac{L\theta^2}{\mu}\right)\le 2,
\]
which is \eqref{eq:strong-obstruction-condition-theta-collapse}. The equivalent form
\eqref{eq:strong-obstruction-condition-eta-collapse} follows from \(\eta=\theta^2/\mu\).

Finally, if \(r\le d/2\), then \(d/r\ge2\), and hence for every \(\theta\in(0,1]\),
\[
\frac{d}{r}\left(1+\frac{L\theta^2}{\mu}\right)
\ge
2\left(1+\frac{L\theta^2}{\mu}\right)
>
2.
\]
Therefore \eqref{eq:strong-obstruction-condition-theta-collapse} cannot hold for any
\(\theta\in(0,1]\), and hence for any \(\beta\in[0,1)\).
This completes the proof.

\end{proof}

\end{document}